\theoremstyle{plain}
\newtheorem{theorem}{Theorem}[section]
\newtheorem{corollary}[theorem]{Corollary}
\newtheorem{proposition}[theorem]{Proposition}
\newtheorem{lemma}[theorem]{Lemma}
\newtheorem{conj}[theorem]{Conjecture}
\theoremstyle{definition}
\newtheorem*{term}{Terminology}
\newtheorem{definition}[theorem]{Definition}
\newtheorem{remark}[theorem]{Remark}
\theoremstyle{plain}
\newtheorem*{theo}{Theorem}
\newtcolorbox{mybox}{colback=blue!5!white,colframe=blue!75!black}
\newcommand{\enm}[1]{\ensuremath{#1}}          %
\newcommand{\CC}{\enm{\mathbb{C}}}
\newcommand{\NN}{\enm{\mathbb{N}}}
\newcommand{\ZZ}{\enm{\mathbb{Z}}}
\newcommand{\PP}{\enm{\mathbb{P}}}
\renewcommand{\phi}{\varphi}
\renewcommand{\theta}{\vartheta}
\renewcommand{\epsilon}{\varepsilon}
\newcommand{\old}[1]{}
\date{}
\subjclass[2020]{05E14, 05E40}
\keywords{Permanents, Dimension, Torus actions.} 
\title{On the codimension of permanental varieties}
\author{A. Boralevi, E. Carlini, M. Micha\l{}ek and E. Ventura}
\address{Politecnico di Torino, Dipartimento di Scienze Matematiche ``G. L. Lagrange'', Corso Duca degli Abruzzi 24\\
10129 Torino, Italy}
\email{ada.boralevi@polito.it}
\email{enrico.carlini@polito.it}
\email{emanuele.ventura@polito.it}
\address{Universit\"at Konstanz}
\email{mateusz.michalek@uni-konstanz.de}
\begin{document}

\begin{abstract}
In this article, we study {\it permanental varieties}, i.e., varieties defined by the vanishing 
of permanents of fixed size of a generic matrix. Permanents and their varieties play an important, and sometimes poorly understood, role in combinatorics. However, there are essentially no geometric results about them in 
the literature, in very sharp contrast to the well-behaved and ubiquitous case of determinants and minors. Motivated by the study of the singular locus of the permanental hypersurface, we focus on the codimension of these varieties. 
We introduce a $\CC^{*}$-action on matrices and prove a number of results. In particular, we improve a lower 
bound on the codimension of the aforementioned singular locus established by von zur Gathen in 1987. 
\end{abstract}

\maketitle

\begin{quote}
\leftskip=70pt {\it To our dear friend and colleague Gianfranco Casnati}
\end{quote}

\section{Introduction}
\epigraph{{\it Despite their structural similarity, the determinant and the permanent are \textnormal{worlds apart}.}}{Avi Wigderson \cite{W19}}

\noindent The most important polynomial associated to a square matrix is its determinant. In hindsight, its ubiquitous presence in mathematics might be related to its very large isotropic group, whose description dates back to Frobenius. Arguably, the second most important polynomial is the {\it permanent}. 
Let $M=(x_{i,j})$ be a $k\times k$ matrix with entries in a field $F$. Let $\mathfrak S_k$ be
the symmetric group of permutations of the set $\lbrace 1,\ldots, k\rbrace$. The {\it permanent} of $M$ is the polynomial
\[
\mathrm{perm}(M) = \sum_{\sigma\in \mathfrak S_k} x_{1,\sigma(1)}\cdots x_{k,\sigma(k)}. 
\]
The permanent has a much smaller isotropic group than the determinant, namely the product of the normalizers of two algebraic tori. Permanents and determinants are famously related by a generating function, the content of the {\it MacMahon's master theorem} \cite[vol. I, \S3, Chapter II, 63-66]{MacMah}. The striking tension between these two polynomials is at the heart of {\it geometric complexity theory}. Indeed, while the determinant may be computed in polynomial time using Gaussian elimination, the permanent is {\it not known} to be exactly computable in polynomial time. The mere existence of such an algorithm would imply {\bf P}={\bf NP}. The grand idea of geometric complexity theory is to approach fundamental problems in complexity theory using tools and techniques from algebraic geometry and representation theory. 
For instance, the ${\bf VP}$ versus ${\bf VNP}$ problem, that may be considered the polynomial cousin of the well-known  ${\bf P}$ versus ${\bf NP}$ problem, concerns finding a sequence $(p_k)_{k\in \NN}$ of polynomials whose algebraic circuit size grows faster than any polynomial in $k$; see \cite[\S 1.2]{Landsberg17} and \cite[\S 12.4]{W19} for details. Valiant conjectured that the $k\times k$ permanents $\mathrm{perm}_k$ form such a sequence \cite{Valiant79}. A possibly weaker but more concrete version of Valiant's conjecture deals with the complexity measure of the permanent, as opposed to the determinant. In detail, the {\it determinantal complexity} of a polynomial $p$ is the smallest number $\mathrm{dc}(p)$ such that
$p$ is an affine linear projection of a determinant of that size. Valiant conjectured that $\mathrm{dc}(\mathrm{perm}_k)$ grows faster than any polynomial in $k$ \cite{Valiant79}. The best result known so far is due to Mignon and Ressayre \cite{MR2005}: $\mathrm{dc}(\mathrm{perm}_k)\geq O(k^2)$; this first super-linear lower bound is still far from the full conjecture. 
On the algebraic geometry side, a study to compare the structure of Fano schemes of determinantal and permanental hypersurfaces was conducted by Chan and Ilten \cite{CI}.

It is worth noticing that permanents naturally arise in combinatorics and especially in graph theory \cite{Minc}. Given a bipartite graph $G$, one naturally associates to $G$ its adjacency square $0/1$-matrix $M_G$; then the permanent of $M_G$ is the number of perfect matchings of $G$. One difficult problem about $0/1$-matrices was posed by Minc in 1967, who asked 
for an upper bound on the value of the permanent. This was solved in 1973 by Br\'{e}gman \cite{Breg} and later by Radhakrishnan using {\it entropy} from quantum information theory \cite{Radha}. 
Permanents were also the subject of the Van der Waerden's conjecture for doubly stochastic matrices, that asked for a lower bound on the value of the permanent for such matrices. 
This was solved by Egorychev and Falikman in 1981 \cite{Eg,Fal}, and later also by Gurvits in 2007 \cite{Gur}, with a shorter argument involving stability of real polynomials. All these results have brilliant proofs and are nicely featured in the beautiful book by Aigner and Ziegler \cite{AZ}. Other interesting appearances of permanents in the sciences include applications to order statistics (the Bapat-Beg Theorem) \cite{BB89,Han94} and quantum mechanics, see \cite{CDM22} and references therein. 

Determinant and permanent share a common historically important generalization in representation theory. Given a $k\times k$ matrix $M$ and 
a partition $\lambda = (\lambda_1,\ldots,\lambda_s)$ of $k$, the {\it immanant} of $M$
is 
\[
\mathrm{Imm}_{\lambda}(M) = \sum_{\sigma\in \mathfrak S_k} \chi_{\lambda}(\sigma) x_{1,\sigma(1)}\cdots x_{k,\sigma(k)},
\]
where $\chi_{\lambda}$ is the character of the irreducible representation of $\mathfrak S_k$ corresponding to $\lambda$. The determinant and permanent 
are special immanants corresponding respectively to the alternating ($\lambda=(1,\ldots,1)$) and trivial ($\lambda=(k)$) representations. Immanants were 
introduced by Littlewood and Richardson \cite{LR34}. The problems we will tackle for permanents are interesting for every immanant, 
as not much is known about their geometry. It would be interesting to study their structural properties as $\lambda$ varies. 

Going back to permanents, it is apparent from the above discussion that they have a tendency to be extremely difficult objects to study. The perspective from which we look at them once again confirms this. Given a generic $k\times k$ matrix $M$ over a field $F$, with $k\geq 3$, the {\it permanental hypersurface} is $P = \lbrace \mathrm{perm}(M)=0\rbrace \subset F^{k\times k}$. A folklore question asks for a description of the singular locus of this hypersurface \cite{Landsberg17, VG}.
Similarly, one may ask this for any {\it immanantal hypersurface} and its singular locus. However this problem seems very elusive and much more involved than the corresponding one for the determinant already for the permanent. We expect that many of the techniques introduced in this paper for permanents carry 
over to study immanants, but the conclusions will depend on $\lambda$. 

\begin{definition}[{\bf Permanental rank \cite{YY}}]
Let $M$ be a matrix. Its {\it permanental rank} is the largest integer $k$ such that there is a $k\times k$ submatrix of $M$ whose permanent is nonzero. The permanental rank of $M$ is denoted $\mathrm{prk}(M)$. 
\end{definition}

\begin{term}
In this paper, by a {\it variety} over a field $F$, we mean a separated scheme of finite type 
that is reduced but not necessarily irreducible over $F$. Our varieties are affine cones over projective varieties that are typically reducible; we neglect the scheme structure of their components, as we shall be concerned with their codimension.
\end{term}

\noindent Given the affine cone $P_{k,n} = \lbrace \mathrm{prk}(M)\leq k-1\rbrace \subset F^{k\times n}$, 
we sometimes look at $\PP(P_{k,n})\subset \PP^{kn-1}$, its corresponding projective variety.
Since we are interested in the codimension of these varieties, we shall jump back and forth between
affine and projective spaces, according to the convenience of the approach at hand.\\

\noindent {\bf Main results.}\\
We first study the codimension of the variety of maximal permanents of a generic matrix, in some ranges. 

\begin{theo}[Theorem \ref{codims}]
Let $F$ be a field of characteristic zero, and $M$ a generic $k\times n$ matrix of linear forms, with $n\geq k+1$. 
Then, for $2\leq k\leq 5$, the codimension of the variety $P_{k,n} = \lbrace \mathrm{prk}(M)\leq k-1\rbrace \subset F^{k\times n}$ is $n$. In particular, when $2\leq k\leq 4$, $P_{k,k+1}$ is a complete intersection. 
\end{theo}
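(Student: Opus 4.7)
The plan is to establish $\mathrm{codim}(P_{k,n}) = n$ by proving matching upper and lower bounds, and then to deduce the complete intersection statement.

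For the upper bound $\mathrm{codim}(P_{k,n}) \le n$, I would exhibit an explicit codimension-$n$ linear subspace contained in $P_{k,n}$. For any row index $i \in \{1,\dots,k\}$, the affine subspace $L_i := \{M \in F^{k\times n} : M_{i,\bullet} = 0\}$ has codimension $n$, and every $k\times k$ submatrix of $M \in L_i$ contains a zero row, so its permanent vanishes. Thus $L_i \subseteq P_{k,n}$, giving the desired inequality. When $n = k+1$, the same bound also follows trivially since $P_{k,k+1}$ is cut out by only $\binom{k+1}{k} = n$ equations.

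The substance of the theorem is the lower bound, $\mathrm{codim}(P_{k,n}) \ge n$, which requires showing that every irreducible component has codimension at least $n$. The case $k=2$ can be handled by an elementary direct argument: one analyzes the relations $x_{1i}x_{2j}+x_{1j}x_{2i}=0$ column by column (in characteristic zero) and shows that the irreducible components of $P_{2,n}$ are precisely $L_1$, $L_2$ (both of codimension $n$) together with, for each pair $(i_0,j_0)$ of distinct column indices, the subvariety where every column outside $\{i_0,j_0\}$ vanishes and the $2\times 2$ permanent of the remaining two columns is zero; this last family has codimension $2(n-2)+1 = 2n-3 \ge n$ for $n\ge 3$. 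For $k \in \{3,4,5\}$, the approach is to leverage the $\CC^{*}$-action on $F^{k\times n}$ developed earlier in the paper. Since the ideal of maximal permanents is torus-homogeneous, one passes to a flat degeneration whose special fibre is a combinatorially simpler torus-invariant scheme of the same codimension; the $\CC^{*}$-equivariance forces every component of the limit to be torus-invariant, reducing the verification to a finite enumeration in which each component is shown to have codimension at least $n$. An alternative, more local, version of the lower bound is to compute the Jacobian of the $\binom{n}{k}$ maximal permanents at a generic point of $L_1$: at $x_{1,\bullet}=0$ only the $n$ columns indexed by $x_{1,1},\dots,x_{1,n}$ are nonzero, and their entries are $(k-1)\times(k-1)$ permanental minors of the generic lower $(k-1)\times n$ block; one has to establish full rank $n$ for this matrix.

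The main obstacle is precisely this case-by-case verification for $k \in \{3,4,5\}$: ruling out the existence of a low-codimension component other than the obvious $L_i$'s, and, in the Jacobian formulation, handling the fact that the $(k-1)\times(k-1)$ permanental entries share many common factors and are far from algebraically independent. The $\CC^{*}$-equivariance is essential here because it restricts components to a list that can be checked by hand; the restriction $k\le 5$ in the statement presumably reflects the value at which this combinatorial control breaks down (or at which a component of strictly smaller codimension first appears). Finally, the complete intersection claim is an immediate consequence of the codimension formula: for $n=k+1$ and $2\le k\le 4$, the variety $P_{k,k+1}$ is defined by exactly $\binom{k+1}{k} = k+1 = n$ maximal permanents, and since its codimension equals this number of generators, these generators form a regular sequence, so $P_{k,k+1}$ is a complete intersection.
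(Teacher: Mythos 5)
Your upper bound argument is fine, and the deduction that codimension $n=k+1$ with $n$ defining equations implies a complete intersection is also correct (in the Cohen--Macaulay ambient ring, height equal to the number of generators forces a regular sequence). The $k=2$ sketch is essentially what the paper invokes from Laubenbacher--Swanson. However, the substance of the theorem — the lower bound for $k\in\{3,4,5\}$ and arbitrary $n\geq k+1$ — is not actually established by your proposal, and the two strategies you outline there both have gaps.

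First, you never give a reduction from general $n$ to the base case $n=k+1$. The paper's proof hinges on exactly this: a double induction on rows and columns in which a component $C$ of $P_{k,n}$ is either a cone over a component of $P_{k-1,n}$ (handled by induction on $k$) or supports a nonvanishing $(k-1)\times(k-1)$ permanent $f$, in which case one localizes at $f$ and eliminates one variable to land in $P_{k,n-1}$ (handled by induction on $n$). This is codified as Theorem~\ref{thm:codimP_kn} and is indispensable; without it you are left proving the bound simultaneously for all $n$, which your sketch does not do.

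Second, the ``flat degeneration to a torus-invariant special fibre'' idea misreads how the $\CC^*$-action is used. The variety $Y=P_{k,n}$ is itself $T$-invariant, so $t\cdot Y = Y$ for all $t$; there is no nontrivial one-parameter degeneration and no combinatorially simpler limit scheme to enumerate. What the paper actually does is map each irreducible component $X$ to its fixed locus $X^T\subset V^T$ via $\phi_{t\to 0}$, and then bound $\dim X$ by $\dim X^T$ plus the generic rank of a weight-one subbundle of the tangent sheaf (the kernel of the matrix $B_1$), cf.\ Corollary~\ref{cor: codim for k x k+1}. Moreover, for the base cases the paper does not rely on the torus action alone: $k=3,4$ are handled by intersecting with an explicit circulant Hankel subspace and verifying the height drop in \texttt{Macaulay2} (Proposition~\ref{cases:3x4 and 4x5}), while $k=5$ uses the bundle correspondence plus a computer check over explicit linear sections (Proposition~\ref{5x6 case}). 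None of this is recoverable from your sketch.

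Third, the Jacobian alternative you propose only shows that $L_1$ is an irreducible component of the expected dimension, since the rank bound on $J(Y)_p$ for $p$ generic in $L_1$ controls the tangent space of $Y$ at $p$, not the dimension of components disjoint from a neighbourhood of $p$. To bound \emph{every} component one must apply the tangent-space/bundle argument at the $T$-fixed image $X^T$ of \emph{each} component $X$, which is precisely the content of Proposition~\ref{bb in our case} and Theorem~\ref{prop:irred comps of k x k+1}, and then still rule out low-codimension components of each possible ``type'' — the step where the explicit computations and Kirkup-matrix analysis enter. As written, your proposal identifies the right ambient tools (the $\CC^*$-action and a Jacobian computation at the fixed locus) but stops short of the reductions and case analysis that make the proof go through.
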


\noindent We speculate (Conjecture \ref{conj: ci}) that the previous result holds in much more generality. In fact, the core of the proof of Theorem \ref{codims} for those special values of $k$ is based on the following. 

\begin{theo}[Theorem \ref{thm:codimP_kn}]
Let $k\geq 1$. 
If $P_{h,h+1}\subset F^{h\times (h+1)}$ has codimension $h+1$ for any $h\leq k$, then $P_{k,n}$
has codimension $n$, for any $n\geq k+1$. In particular, the validity of Conjecture \ref{conj: ci} for every $k\in \NN$ implies 
that $P_{k,n}$ has codimension $n$, for every $k\in \NN$ and $n\geq k+1$.
\end{theo}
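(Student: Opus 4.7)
My plan is to proceed by induction on $n\ge k+1$, with the base case $n=k+1$ handed to us by the hypothesis applied at $h=k$. For the inductive step, I assume $\op{codim} P_{k,n}=n$, and consider the projection $\pi\colon F^{k\times (n+1)}\to F^{k\times n}$ forgetting the last column; it sends $P_{k,n+1}$ into $P_{k,n}$.

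The upper bound $\op{codim} P_{k,n+1}\le n+1$ is immediate: the matrices with vanishing last row form an irreducible subvariety of codimension $n+1$ contained in $P_{k,n+1}$ (every $k\times k$ submatrix inherits a zero row). For the lower bound, fix an irreducible component $Z\subseteq P_{k,n+1}$ and bound $\dim Z$ via the fibers of $\pi|_Z$. Over each $M'\in\pi(Z)\subseteq P_{k,n}$, the fiber is contained in the linear subspace
\[
L(M')=\Bigl\{v\in F^k \;:\; \sum_{i=1}^k v_i\,\mathrm{perm}\bigl((M'_S)^{\widehat{i}}\bigr)=0 \text{ for all } S\subset[n],\ |S|=k-1\Bigr\},
\]
obtained by expanding each ``new'' $k\times k$ permanent along the last column. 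In particular $L(M')=F^k$ exactly when every $(k-1)\times(k-1)$ submatrix of $M'$ has vanishing permanent, i.e., when $\mathrm{prk}(M')\le k-2$.

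The crux is then the following subclaim: the locus $Q_{k,n}:=\{M'\in F^{k\times n}:\mathrm{prk}(M')\le k-2\}$ has dimension strictly less than $(k-1)n$. Granting this, two cases arise. If $\pi(Z)\not\subseteq Q_{k,n}$, then on a dense open of $\pi(Z)$ one has $\dim L(M')\le k-1$, so by the inductive hypothesis
\[
\dim Z\le\dim\pi(Z)+(k-1)\le(k-1)n+(k-1)=(k-1)(n+1).
\]
If instead $\pi(Z)\subseteq Q_{k,n}$, the subclaim gives $\dim\pi(Z)<(k-1)n$ and the trivial bound $\dim L(M')\le k$ still yields $\dim Z\le\dim Q_{k,n}+k\le(k-1)(n+1)$. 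Either way $\op{codim} Z\ge n+1$.

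The main obstacle is the subclaim. To attack it I would run an inner induction on $k$, using that $M'\in Q_{k,n}$ iff every row-deletion $M'^{\widehat{i}}$ lies in $P_{k-1,n}$. Since the hypothesis $\op{codim} P_{h,h+1}=h+1$ also holds for $h\le k-1$, the theorem applied at those smaller values gives $\dim P_{k-1,n}=(k-2)n$, together with enough information about its top-dimensional strata (e.g.\ ``one row is zero''). Projecting $Q_{k,n}$ onto a single row-deletion lands in $P_{k-1,n}$; the remaining $k-1$ row-deletion constraints, expanded via the permanent along the ``reinserted'' row, impose a linear system on that missing row whose rank is generically positive on each top-dimensional stratum of $P_{k-1,n}$. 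Tracking this drop stratum by stratum should give $\dim Q_{k,n}\le(k-2)n<(k-1)n$. The delicate part is precisely this stratified bookkeeping, and it relies on knowing the codimensions of the $P_{h,h+1}$ \emph{exactly}, not merely that they are proper subvarieties.
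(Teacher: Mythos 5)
Your overall strategy is genuinely different from the paper's. The paper reduces $n$ by one via a localization/elimination argument: after inverting a generically nonzero $(k-1)\times(k-1)$ permanent $f$, the maximal permanent on columns $\{1,\dots,k-1,n\}$ becomes $fz+g$ and can be used to eliminate the variable $z=x_{k,n}$, identifying the localized coordinate ring of the component with a localization of the coordinate ring of $P_{k,n-1}$; the ``bad'' case, where no $(k-1)\times(k-1)$ permanent on the first $k-1$ rows is generically nonzero, is disposed of immediately because the component is then literally a cone over a component of $P_{k-1,n}$, so the induction on $k$ applies with no extra lemma. You instead run the column projection geometrically, bound $\dim Z$ by image plus generic fiber, and sweep the ``bad'' case into a new subclaim: $\mathrm{codim}\,Q_{k,n}>n$, where $Q_{k,n}=\{\mathrm{prk}\le k-2\}\subset F^{k\times n}$. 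That subclaim is not in the paper, and it is the real content of your Case B.

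The subclaim is, I believe, provable along the lines you indicate, but your sketch has a soft spot you should be aware of. You write that the theorem at level $k-1$ gives ``enough information about its top-dimensional strata (e.g.\ `one row is zero')'' — it does not: the theorem only produces the codimension, not a description of the components, and classifying components is precisely what the later sections of the paper fight hard to do even in the square case $n=k+1$. Fortunately you do not actually need the strata. The correct way to close your inner induction is: take any component $C$ of $Q_{k,n}$ and look at $\rho_1(C)\subset P_{k-1,n}$. Either $\rho_1(C)\subset Q_{k-1,n}$, and then $\dim C\le\dim Q_{k-1,n}+n<(k-2)n+n=(k-1)n$ by the inner inductive hypothesis at $(k-1,n)$; or $\rho_1(C)\not\subset Q_{k-1,n}$, so the generic point of $\rho_1(C)$ has a nonzero $(k-2)\times(k-2)$ permanent, the linear system on the deleted row has rank $\ge 1$, and $\dim C\le\dim P_{k-1,n}+(n-1)=(k-1)n-1$. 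This is a genuine double induction on $(k,n)$ intertwining the theorem and the subclaim, with base $Q_{2,n}=\{0\}$. Written this way your argument is complete, and it is a valid alternative to the paper's proof; but as stated, the appeal to ``knowing the top-dimensional strata'' is a gap you would need to replace by the dichotomy above.
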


\noindent Note that the sequence of ideals $I(P_{k,n})$ for $n\in \NN_{\geq k+1}$ is an example of {\it symmetric wide-matrix variety} of Draisma-Eggermont-Farooq \cite{DEF}. They show that the number of components up
to the action of the symmetric group $\mathfrak S_n$ is a quasi-polynomial in $n$ \cite[Theorem 1.1.1]{DEF}. 

We introduce a $T=\CC^{*}$-action on matrices which unravels a subtle geometric structure of $P_{k,k+1}$. We  establish a {\it correspondence} between certain vector bundles coming from the tangent bundle and irreducible components of $P_{k,k+1}$; see \S\ref{subsec:correspondence1} and \S\ref{subsec:correspondence2}. 

\begin{theo}[Theorem \ref{prop:irred comps of k x k+1}]
Let $X$ be any irreducible component of $Y=P_{k,k+1}$. 
Then $X$ coincides with $T^1_{X^T,Y}$, the closure of the total space of the weight one subbundle (under the torus action) of the tangent bundle over some open set in $X^T$. 
\end{theo}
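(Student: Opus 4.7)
My plan is to use the Bia\l{}ynicki--Birula (BB) decomposition of $Y = P_{k,k+1}$ under the $T$-action. Since $T = \CC^*$ is connected and acts on $Y$, it must fix each irreducible component of $Y$, so $X$ itself is $T$-invariant. Because $Y$ is an affine cone and the $T$-action can be arranged to have nonnegative weights on $F^{k \times (k+1)}$, every orbit has a well-defined limit as $t \to 0$, and in particular $X^T$ is nonempty.

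For each irreducible component $F$ of $Y^T$, consider the attracting cell
\[
Y^+_F \ = \ \bigl\{ y \in Y : \lim_{t \to 0} t \cdot y \in F \bigr\}.
\]
The sets $Y^+_F$ constructibly partition $Y$, and on the open smooth locus $U \subset F$, the standard BB theory identifies $Y^+_F|_U$ with the total space of the strictly positive-weight subbundle $\bigoplus_{w > 0} T^w_{X^T, Y}|_U$ of $TY|_U$, embedded in $F^{k\times(k+1)}$ by adding a tangent vector to its base point.

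To match $X$ with a single cell, I would take a general $x \in X$, form $p = \lim_{t \to 0} t \cdot x \in X^T$, and let $F$ be the component of $Y^T$ containing $p$. By $T$-invariance and closedness of $X$ one has $Y^+_F \subseteq X$, and by dimension count $\overline{Y^+_F} = X$. Restricting to $U$ then gives $X = \overline{Y^+_F|_U}$, i.e.\ the closure of the total space of $\bigoplus_{w > 0} T^w_{X^T, Y}|_U$.

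The remaining, and main, difficulty is to show that only the weight $w = 1$ eigenbundle contributes, so that the previous description collapses to $X = \overline{T^1_{X^T, Y}|_U}$. This requires a careful analysis of the $T$-weights on the entries of a generic $k \times (k+1)$ matrix: one must verify that, for every fixed component $F$ that arises as the BB source of an irreducible component of $P_{k, k+1}$, the weights on the normal directions to $F$ inside $Y$ are all $+1$, and never higher. I would carry this out by choosing the $T$-weights on matrix entries from $\{0, 1\}$ and then invoking the explicit correspondence between fixed components of $Y$ and irreducible components of $P_{k, k+1}$ developed in \S\ref{subsec:correspondence1}--\S\ref{subsec:correspondence2}, which pins down exactly which entries vanish on the fixed locus and thus controls the normal weights.
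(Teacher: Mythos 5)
The plan you outline rests on the standard Bia\l{}ynicki--Birula theorem, which identifies the attracting cell $Y^+_F$ over a fixed component $F$ with the total space of the positive-weight tangent subbundle. That identification is a theorem about \emph{smooth} $Y$. Here $Y = P_{k,k+1}$ is singular precisely along the relevant fixed locus: as the paper observes in Corollary~\ref{cor:tang at P kx(k+1)}, every $p \in X^T$ with $X \neq W$ lies on at least two components of $Y$, hence is a singular point. Restricting to the smooth locus $U$ of $F$ does not help, because points of $U$ are still singular points of the ambient variety $Y$, which is where the BB bundle structure is actually used. Proposition~\ref{bb in our case} is explicitly described as a weakened, one-inclusion version of BB ``in a possibly singular context''; it only gives $X \subseteq T^1_{X^T,Y}$. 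The reverse inclusion, which is the real content of the theorem, cannot be deduced from general BB theory here.

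Two further steps in your sketch are unsupported. First, ``by $T$-invariance and closedness of $X$ one has $Y^+_F \subseteq X$'' is not a valid inference: a priori there could be points of $Y$, lying on a different component, whose limit also lands in $F$; nothing about $X$ being closed and $T$-invariant rules this out. Second, the concluding ``dimension count'' giving $\overline{Y^+_F} = X$ is circular without first knowing the fiber dimension of $Y^+_F$, which is exactly what the (inapplicable) BB structure would have provided. Finally, your stated ``main difficulty'' — controlling weights $w > 1$ — is a non-issue in this setup: the torus acts on $V = V_0 \oplus V_1$ with weights $0$ and $1$ only, so the tangent space at any fixed point has only weight-$0$ and weight-$1$ pieces automatically.

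The ingredient your proposal is missing, and which does the real work in the paper, is the explicit identification of the weight-$1$ tangent fiber at $p \in X^T$ with $\ker B_1(A_p)$, together with the observation that the linear condition $B_1(A_p)\, q^t = 0$ is \emph{literally} the system of $k \times k$ permanent equations evaluated at $(q,p)$. This shows $T^1_{Y|X^T \cap \mathcal{X}_i} \subseteq Y$ directly, with no appeal to BB theory; combined with irreducibility of $T^1_{X^T,Y}$ and the fact that it intersects the component $X$ densely, one gets $X = T^1_{X^T,Y}$. Without this computation (or a substitute for it), the reverse inclusion does not follow.
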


\smallskip

In a second part of the paper we improve a lower bound due to von zur Gathen, established in 1987, again with the help of a $\CC^{*}$-action. This allows us to have a description for the irreducible components of the singular locus $\mathrm{Sing}(P)$
of the permanental hypersurface as subvarieties of total spaces of the aforementioned vector bundles; see \S\ref{subsec:singlocus}. 

Our result in this direction reads as follows. 

\begin{theo}[Theorem \ref{codim of sing loc}]
Let $k\geq 6$, and let $P=\lbrace \mathrm{perm}(M)=0\rbrace \subset \CC^{k\times k} $ be the permanental hypersurface. The codimension of the singular locus $\mathrm{Sing}(P) = \lbrace \mathrm{prk}(M)\leq k-2\rbrace$ satisfies the inequality $6 \leq \mathrm{codim} \: \mathrm{Sing}(P) \leq 2k$. 
\end{theo}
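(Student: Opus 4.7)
I treat the two inequalities separately.

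\smallskip

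\textbf{Upper bound $\mathrm{codim}\,\mathrm{Sing}(P) \leq 2k$.} The plan is to exhibit an explicit linear subspace of codimension $2k$ contained in $\mathrm{Sing}(P)$. Take $L \subset \CC^{k \times k}$ to be the linear subspace on which the first two rows vanish, so $\mathrm{codim}(L) = 2k$. For any $M \in L$, a $(k-1) \times (k-1)$ submatrix is obtained by omitting exactly one row and one column, hence contains at least one of the two zero rows, so its permanent vanishes. Therefore $L \subseteq \mathrm{Sing}(P)$, giving some irreducible component of $\mathrm{Sing}(P)$ of codimension at most $2k$.

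\smallskip

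\textbf{Lower bound $\mathrm{codim}\,\mathrm{Sing}(P) \geq 6$.} This is the substantive part. The plan is to exploit the $T = \CC^{\ast}$-action on matrices together with the component/bundle correspondence developed earlier (cf.\ \S\ref{subsec:correspondence1}--\S\ref{subsec:correspondence2} and Theorem \ref{prop:irred comps of k x k+1}), now applied to $Y := \mathrm{Sing}(P)$.

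First, the $T$-action preserves $Y$: its defining equations---the $(k-1)\times(k-1)$ permanents of the generic matrix---are weight-homogeneous under the action $t \cdot x_{i,j} = t^{a_i + b_j} x_{i,j}$, just as the top permanent is. Second, for each irreducible component $X$ of $Y$, the correspondence (analogous to Theorem \ref{prop:irred comps of k x k+1}) describes $X$ as the closure of the total space of the weight-one subbundle $T^{1}_{X^T, Y}$ of the tangent bundle over an open subset of the $T$-fixed locus $X^T$, so that
\[
\dim X \;=\; \dim X^T \;+\; \mathrm{rk}\,T^{1}_{X^T, Y}.
\]
The codimension bound then reduces to showing $\dim X^T + \mathrm{rk}\,T^{1}_{X^T, Y} \leq k^2 - 6$ for every irreducible component $X$.

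Third, I would carry out a combinatorial classification of the fixed loci. A $T$-fixed matrix is supported on the set $S \subset \{1,\dots,k\}^2$ of positions where $a_i + b_j = 0$; with the weights chosen so as to facilitate the analysis (and to place the example of the upper bound inside a concrete admissible configuration), $X^T$ is an open subset of a coordinate subspace further cut out by whatever permanental equations survive on $S$. The condition $\mathrm{prk}(M) \leq k-2$ then translates into combinatorial conditions on $S$; for each admissible support one computes $\dim X^T$ and the rank of $T^{1}_{X^T, Y}$, the latter being the number of tangent directions of weight one that remain inside $Y$ at a generic fixed point. Fourth, verify the uniform inequality across all admissible configurations; this is where the improvement over the 1987 bound of von zur Gathen materializes.

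\smallskip

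The \textbf{principal obstacle} is the combinatorial classification of admissible supports $S$ and the sharp control of $\dim X^T + \mathrm{rk}\,T^{1}_{X^T, Y}$ in each case. The fact that the lower bound $6$ does not grow with $k$ strongly suggests that the extremal configurations are captured by a bounded combinatorial pattern independent of the ambient size, so that new admissible configurations arising as $k$ grows do not yield components of codimension smaller than $6$. The analysis is expected to align with, and be guided by, the bundle description of components of $\mathrm{Sing}(P)$ obtained in \S\ref{subsec:singlocus}.
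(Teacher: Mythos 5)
Your plan for the upper bound is correct and is essentially the standard observation (already present in von zur Gathen's theorem): the subspace where the first two rows vanish lies in $\mathrm{Sing}(P)$ and has codimension $2k$.

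For the lower bound, you have the right general framework in mind (a $\CC^{*}$-action, fixed loci $X^T$, the weight-one part of the tangent sheaf), but there are two significant gaps. First, you assert the equality $\dim X = \dim X^T + \mathrm{rk}\, T^1_{X^T,Y}$ as an analogue of Theorem \ref{prop:irred comps of k x k+1}. That equality is proved in the paper only for $Y = P_{k,k+1}$, using the specific algebraic structure of that ideal; for $Y=\mathrm{Sing}(P)$ the paper has only an inequality $\mathrm{codim}_V X \geq \mathrm{codim}_{V^T}X^T + 2\,\mathrm{rk}(L_p)$ (Corollary \ref{cor:lower bound on codim}), and the proof of Theorem \ref{thm: case rank 0}(ii) in fact relies on showing that this inequality is sometimes strict. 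So the equality you invoke is neither proved nor, in general, true in the setting you need it. Second, and more fundamentally, the combinatorial classification of supports under a multi-parameter weighting $t\cdot x_{ij}=t^{a_i+b_j}x_{ij}$ that you propose would be an unbounded task as $k$ grows, and you give no mechanism to cut it down. The paper instead fixes one concrete one-parameter action (scaling the first two rows) so that at any fixed point $p$ the weight-one tangent directions are controlled by a single symmetric $k\times k$ matrix $L_p$ with zero diagonal, built from $(k-2)\times(k-2)$ permanents. The zero diagonal forces $\mathrm{rk}(L_p)\neq 1$, so only the cases $\mathrm{rk}(L_p)\in\{0,2\}$ and $\mathrm{rk}(L_p)\geq 3$ survive; the $\geq 3$ case is immediate, the rank-$2$ case gives a codimension-$2$ constraint on $X^T$ via two independent irreducible permanents (Proposition \ref{prop: case rank 2}), and the rank-$0$ case reduces to a permanental variety of a strictly smaller matrix and is handled by an induction (Proposition \ref{prop: subcases of rank 0}). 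Your plan does not identify this rank stratification or the key inductive reduction, which are where the actual content of the bound $\geq 6$ lives.
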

\vspace{3mm}

\noindent {\bf Organization of the paper.}\\
In \S \ref{sec:2xn}, we discuss the case of permanents of $2\times n$ matrices,
where we also underline the differences between our case and the case of minors. 
In \S\ref{sec:kxk+1}, we initiate the study of the variety of maximal permanents of $k\times (k+1)$ matrices. Its analysis will be developed further in \S\ref{sec:torus action}, where we employ
$\CC^{*}$-actions that are useful to organize irreducible components of the aforementioned variety. 
In these two sections we prove the main results showcased in this introduction. Finally, in \S\ref{codes}, we include the scripts used to deal with the description of irreducible components in \S\ref{subsec45} and \S\ref{subsec56}.\\

\noindent {\bf Acknowledgements.}\\
A.B., E.C. and E.V. are members of GNSAGA group of INdAM (Italy). M.M. was supported by the DFG grant
467575307. E.V. was partially supported by 
the INdAM-GNSAGA project ``Classification Problems in Algebraic Geometry: Lefschetz Properties and Moduli Spaces'' (CUP\textunderscore E55F22000270001). 
E.V. also thanks the Department of Mathematics of Universit\"at Konstanz, for warm hospitality and financial support, where part of this project was conducted.
This study was carried out within the ‘0-Dimensional Schemes, Tensor Theory, and Applications’ project 2022E2Z4AK – funded by European Union – Next Generation EU  within the PRIN 2022 program (D.D. 104 - 02/02/2022 Ministero dell’Universit\`a e della Ricerca).
M.M. thanks the Institute for Advanced Study for a great working environment and support through the Charles Simonyi Endowment.
We thank an anonymous reviewer for useful suggestions improving the readability, and for providing the majority of the newer code in \S\ref{codes}, which simplifies that in an earlier version of this paper. We acknowledge the invaluable help of the algebra software \texttt{Macaulay2} \cite{M2}. 

\section{Permanents of $2\times n$ matrices of linear forms}\label{sec:2xn}

In this section $F$ will be a field of characteristic zero, unless explicitly stated otherwise. The ideal generated by $2\times 2$ permanents of a generic matrix is by now very much understood. A Gr\"{o}bner basis and a complete description of its minimal primes were obtained in \cite{LS2000}. More recently, 
in \cite{GHSW}, the authors determined the minimal free resolution of the $2\times 2$ permanents of a $2\times n$ matrix. The results and observations in this section are most naturally stated in projective space.

\begin{theorem}\label{theo: 2xn}
Let $M$ be a generic $2\times n$ matrix of linear forms with $n\geq 3$. 
The variety $\PP(P_{2,n})= \PP(\lbrace \mathrm{prk}(M)\leq 1\rbrace) \subset \PP^{2n-1}$ 
has codimension $n$. Its singular locus has dimension $1$ and consists of $n^2$ lines.  
\begin{proof}
We first assume that the entries of $M$ are the coordinates of $\PP^{2n-1}$ denoted by $x_{ij}$, where $1\leq i\leq 2$ and $1\leq j\leq n$. 
By \cite[Theorem 4.1]{LS2000}, the matrices in $\PP(P_{2,n})$ are such that either $(n-2)$ of their columns vanish and the smooth quadric (on the left two columns) vanishes, or one of their rows vanishes. Thus, the variety has codimension $n$. The irreducible components of $\PP(P_{2,n})$ are two $\PP^{n-1}$'s and $\binom{n}{2}$ quadrics in  $\PP^3$. They are all smooth. 

We introduce $n^2$ lines as follows: for each $x_{ij}$, consider the $n$ lines whose local coordinates
are $x_{ij}$ and $x_{\ell k}$ where either $\ell=i$ and $k\in [n]\setminus \lbrace j\rbrace$ or $\ell\neq i$ and $k=j$. 

We show that these $n^2$ lines are in the singular locus $S = \mathrm{Sing}(\PP(P_{2,n}))$. 
To see this, up to permuting rows or columns, we have two types of lines: 
$r_1$ with local coordinates $x_{11}, x_{12}$, and $r_2$ with local coordinates $x_{11}, x_{21}$. 

Let $J = [n]\setminus \lbrace 1,2\rbrace$. The line $r_1$ is contained in one of the two $\PP^{n-1}$'s and the smooth quadric in a $\PP^3$ defined by the ideal $(x_{1, j\in J}, x_{2, j\in J}, x_{11}x_{22}+x_{12}x_{21})$. Therefore $r_1$ is in $S$. The line $r_2$ is contained in $(n-1)$ of the smooth quadrics above whose equation is $x_{11}x_{2j}+x_{21}x_{1j}=0$ for $j=2,\ldots, n$. Hence $r_2$ is in $S$. There are $n^2-n$ lines of the same type as $r_1$, and $n$ of the same type as $r_2$. 
In conclusion, $S$ contains the $n^2$ lines just described. 

Now we look at all the set-theoretic intersections of the irreducible components. The two copies of $\PP^{n-1}$ are disjoint. 
Each $\PP^{n-1}$ intersects all the $\binom{n}{2}$ smooth quadrics in $(n^2-n)/2$ lines (all these lines are of the same type as $r_1$). Two quadrics intersect at most along one of the lines of the same type as $r_2$. Thus $S$ is contained in the $n^2$ lines described above. 

Any matrix $M'$ (regarded as a vector) in the orbit under the linear action of $\mathrm{GL}(2n,\CC)$ of the vector $M\in \CC^{2n}$ is a matrix with $2n$ linearly independent linear forms. The induced  action on projective space preserves the invariants of the irreducible components of $\PP(P_{2,n})$ and $S$. 
\end{proof}
\end{theorem}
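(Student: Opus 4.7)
The plan is to reduce to the coordinate case by $\mathrm{GL}$-equivariance, invoke the Laubenbacher--Swanson description of the minimal primes of the ideal of $2\times 2$ permanents, and then read the singular locus off as the union of pairwise intersections of the (necessarily smooth) irreducible components.

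First, acting with $\mathrm{GL}(2n,F)$ on the $2n$-dimensional space of linear forms preserves $\PP(P_{2,n})$ set-theoretically together with its codimension and singular structure, so I may assume that the entries of $M$ are the coordinates $x_{ij}$ on $\PP^{2n-1}$. By \cite[Theorem 4.1]{LS2000}, the irreducible components of $\PP(P_{2,n})$ are then two disjoint $\PP^{n-1}$'s (obtained by setting one of the two rows to zero) and $\binom{n}{2}$ smooth quadric surfaces $Q_{ij}\subset \PP^3$ indexed by pairs of columns $i<j$ (where all other $n-2$ columns vanish and $x_{1i}x_{2j}+x_{1j}x_{2i}=0$). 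The row-zero components have codimension $n$ in $\PP^{2n-1}$, and each quadric has codimension $2n-3\geq n$ whenever $n\geq 3$; hence $\mathrm{codim}\, \PP(P_{2,n}) = n$.

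Since every irreducible component is smooth, the singular locus coincides with the union of pairwise intersections of distinct components, and I would enumerate these. The two row-zero $\PP^{n-1}$'s sit in complementary coordinate subspaces of $\PP^{2n-1}$ and hence are disjoint. The row-$\ell$-zero $\PP^{n-1}$ meets each $Q_{ij}$ in the coordinate $\PP^1$ with local coordinates $x_{\ell',i},x_{\ell',j}$ (where $\ell'\neq \ell$), yielding the lines of type $r_1$. Two quadrics $Q_{ij}, Q_{kl}$ with disjoint column sets do not meet, while if they share a single column $m$ their intersection is the coordinate $\PP^1$ with local coordinates $x_{1m},x_{2m}$, giving the lines of type $r_2$. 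A tally then gives $2\binom{n}{2}=n(n-1)$ lines of type $r_1$ plus $n$ lines of type $r_2$, for a total of $n^2$ coordinate lines.

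The main obstacle is combinatorial bookkeeping: I must ensure that distinct pairs of components produce distinct coordinate lines and that the above enumeration is complete. Both are transparent since each such line is uniquely determined by its support (either ``same row, two columns'' or ``same column, two rows''), and the above case analysis exhausts all pairs among the $2+\binom{n}{2}$ components. The genericity hypothesis on the linear forms of $M$ is absorbed by the opening $\mathrm{GL}$-equivariance reduction.
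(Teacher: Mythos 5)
Your proof is correct and follows essentially the same route as the paper's: reduce to coordinates via $\mathrm{GL}$-equivariance, invoke Laubenbacher--Swanson \cite[Theorem 4.1]{LS2000} to identify the irreducible components as two $\PP^{n-1}$'s and $\binom{n}{2}$ smooth quadric surfaces, and compute the singular locus as the union of pairwise intersections. The only cosmetic difference is that you explicitly invoke the general fact that the singular locus of a variety with smooth irreducible components equals the union of pairwise intersections of distinct components, whereas the paper spells out the two containments by hand; both yield the same enumeration of $n(n-1)+n=n^2$ coordinate lines.
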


\begin{remark}
Let $M$ be a $2\times n$ Hankel matrix, i.e. a matrix of the form
\begin{equation}\label{Hankel}
M = \begin{pmatrix} 
x_0 & x_1 & \cdots & x_{n-1} \\
x_1 & \cdots & x_{n-1} & x_n \\
\end{pmatrix}.
\end{equation}

While $\PP(\lbrace \mathrm{rk}(M)\leq 1\rbrace)\subset \PP^n$ is a rational normal curve of degree $n$, the permanental version (in characteristic different than $2$) is different and surprisingly small, as shown in the next result.
\end{remark}

In the next proposition, we are interested in the scheme structure. 

\begin{proposition}\label{hankel-2xn}
Let $F$ be a field, $\mathrm{char}(F)\neq 2$, and let $M$ be a $2\times n$ Hankel matrix of the form \eqref{Hankel}. The scheme $\PP(P_{2,n})=\PP(\lbrace \mathrm{prk}(M)\leq 1\rbrace)\subset \PP^n$ is zero-dimensional and of degree $8$, supported at two points. In particular, the degree of this zero-dimensional scheme does not depend on $n$. If $\mathrm{char}(F)=2$, then $\PP(P_{2,n}) = \PP(\lbrace \mathrm{rk}(M)\leq 1\rbrace)$ is a rational normal curve of degree $n$.  
\end{proposition}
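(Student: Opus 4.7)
The plan is to work directly with the generators of the permanental ideal. The $2\times 2$ permanent on columns $i<j$ of the Hankel matrix \eqref{Hankel} is
\[
f_{i,j} = x_i x_{j+1} + x_{i+1} x_j, \qquad 0\leq i<j\leq n-1,
\]
and these generate the ideal $I$ of $\PP(P_{2,n})\subset \PP^n$. I first treat $\mathrm{char}(F)\neq 2$, where the scheme is expected to be zero-dimensional of degree $8$.

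The first step is to show that set-theoretically $V(I)$ is exactly $\{p_1,p_2\}$ with $p_1=[1:0:\cdots:0]$ and $p_2=[0:\cdots:0:1]$. In the affine chart $x_0=1$ the equations $f_{0,j}=x_{j+1}+x_1 x_j$ for $j=1,\ldots,n-1$ let me recursively write $x_{j+1}=-x_1 x_j$, so $x_k=(-1)^{k-1}x_1^k$ for every $k\geq 1$. Substituting into $f_{1,2}=x_1 x_3+x_2^2$ produces $2x_1^4$, forcing $x_1=0$ and hence every $x_k=0$; thus $p_1$ is the unique point of $V(I)$ in this chart. The symmetry $x_i\leftrightarrow x_{n-i}$ preserves the set of permanents $\{f_{i,j}\}$, so the chart $x_n=1$ contributes only $p_2$. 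On the remaining locus $\{x_0=x_n=0\}$, the equations $f_{0,1}=x_1^2$ and $f_{n-2,n-1}=x_{n-1}^2$ give $x_1=x_{n-1}=0$, and a downward-outward induction using $f_{k,k+1}=x_k x_{k+2}+x_{k+1}^2$ forces all remaining coordinates to vanish, leaving no extra points.

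Next I would compute the scheme structure locally at $p_1$ and invoke the symmetry above to finish. The $n-1$ relations $\{f_{0,j}\}_{j=1}^{n-1}$ are each of the form $x_{j+1}+(\text{lower-index monomial})$, so dividing $F[x_1,\ldots,x_n]$ by them gives an isomorphism onto $F[x_1]$ via $x_k\mapsto(-1)^{k-1}x_1^k$. A direct calculation then shows that each remaining generator $f_{i,j}$ with $1\leq i<j\leq n-1$ maps to $\pm 2\,x_1^{i+j+1}$; the minimum exponent is $4$, attained at $(i,j)=(1,2)$, and all other generators are multiples of $x_1^4$. Hence the local ring at $p_1$ is $F[x_1]/(2 x_1^4)$ of $F$-length $4$, and by symmetry the same holds at $p_2$. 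Summing gives total degree $8$, independent of $n$. If $\mathrm{char}(F)=2$, the permanent coincides with the determinant, so $\PP(P_{2,n})=\PP(\{\mathrm{rk}(M)\leq 1\})$ is the classical rational normal curve of degree $n$ in $\PP^n$. The main delicate point to verify is that the substitution $x_k\mapsto(-1)^{k-1}x_1^k$ really defines an isomorphism of (local) rings and not merely of their reduced structures, and that no unexpected linear combination of the $f_{i,j}$ shortens the local length; the triangular form of the $f_{0,j}$ and the lower bound $i+j+1\geq 4$ for $1\leq i<j$ take care of both.
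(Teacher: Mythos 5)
Your proof is correct, and it follows the same overall strategy as the paper: compute the local length at each of the two support points $p_0,p_n$ by passing to the affine chart and eliminating via the permanents that involve the boundary column, then use the symmetry $x_i\leftrightarrow x_{n-i}$. The execution of the crucial step, however, is different and arguably cleaner than the paper's. The paper exhibits an explicit, rather opaque identity $x_{n-1}^4=g_1(x_{n-1}^2+x_{n-2})+g_2(x_{n-1}x_{n-2}+x_{n-3})+g_3(x_{n-2}^2+x_{n-1}x_{n-3})$ and then asserts that $\{1,x_{n-3},x_{n-2},x_{n-1}\}$ is a basis; this only makes $x_{n-1}^4\in J_n$ explicit and leaves the lower bound on the length (that no smaller power lies in the ideal) unverified. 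You instead observe that modding out by $f_{0,1},\dots,f_{0,n-1}$ yields the ring isomorphism onto $F[x_1]$ via $x_k\mapsto(-1)^{k-1}x_1^k$, and then that every remaining generator $f_{i,j}$ with $1\leq i<j$ maps to $\pm 2\,x_1^{i+j+1}$. Since $i+j+1\geq 4$ with equality only at $(1,2)$, this identifies the local ideal as exactly $(x_1^4)$, giving both bounds at once. You also prove the support claim directly rather than citing the structural result of Laubenbacher--Swanson, so the argument is fully self-contained. The $\mathrm{char}(F)=2$ case is handled the same way (permanent equals determinant there), as in the paper.
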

\begin{proof}
From the description given in \cite{LS2000}, it is easy to see that the scheme is supported on the two points 
$p_0 = [1:0:\cdots:0]$ and $p_n = [0:\cdots:0:1]$. We work on the affine chart $\{x_n\neq 0\}$. In this chart, call the resulting ideal $J_{n}$. 
Then, for $2\leq i\leq n$, we have $x_{n-i} + x_{n-i+1}x_{n-1}\in J_n$. 

Let 
\[
g_1 = -x_{n-2}^2-x_{n-3}x_{n-1}-x_{n-2}x_{n-1}+x_{n-1}^2-x_{n-3}-\frac{1}{2}x_{n-2}, 
\]
\[
g_2=-x_{n-2}^2-x_{n-3}x_{n-1}+x_{n-1}^2+x_{n-2}-\frac{1}{2}x_{n-1},
\]
\[
g_3 = x_{n-2}x_{n-1}+x_{n-1}^2+x_{n-3}+x_{n-2}+\frac{1}{2}.
\] 
Then $x_{n-1}^4 = g_1(x_{n-1}^2 + x_{n-2}) + g_2 (x_{n-1}x_{n-2}+x_{n-3})   + g_3(x_{n-2}^2 + x_{n-1}x_{n-3})\in J_n$. 
Hence, for $4\leq i\leq n$, the variable $x_{n-i}$ is zero in the quotient $K[x_1,\ldots, x_n]/J_n$. Thus $1, x_{n-3}, x_{n-2}, x_{n-1}$ generate the quotient and form a basis. It follows that the scheme has degree $4$ at $p_n$. By symmetry, it has degree $4$ at $p_0$, as well. 
\end{proof}

\section{Permanents of $k\times (k+1)$ matrices of linear forms}\label{sec:kxk+1}

\begin{proposition}\label{lower-upperboundscodim}
Let $F$ be an arbitrary field, $k\geq 2$, and $M$ a generic $k\times n$ matrix of linear forms, with $n\geq k$. 
Then the codimension of $P_{k,n} := \lbrace \mathrm{prk}(M)\leq k-1\rbrace \subset F^{k\times n}$ 
satisfies the inequality $n-k+1\leq \mathrm{codim}(P_{k,n})\leq n$. 
\begin{proof}
The upper bound $\mathrm{codim}(P_{k,n})\leq n$ holds for any $n\geq k\geq 2$, because we have linear spaces
of codimension $n$ inside $P_{k,n}$.  

For the lower bound, we proceed by induction on $k\geq 2$. The case $k=2$ is settled in Theorem \ref{theo: 2xn}. Let $k\geq 3$, and assume that the statement is true for $k-1$. Let $C$ be an irreducible component of $P_{k,n}$. We have two cases: 
either $C$ is a cone over an irreducible component of $P_{k-1,n}(M')$ where $M'$ (up to permuting rows and columns)
is a $(k-1)\times n$ matrix of linear forms, or it is not. In the first case, by induction $C$ has codimension at least $n-(k-1)+1=n-k+2>n-k+1$ in $F^{k\times n}$. In the second case, let $A$ be the generic point of $C$ and set $J=\lbrace 1,\ldots, k-1\rbrace$. We may assume $\mathrm{perm}(M_{J,J})(A)\neq 0$, i.e. the $(k-1)\times (k-1)$ permanent of the upper-left corner of $M$ is nonzero at $A$. Thus, for all $k\leq j\leq n$, $x_{{k,j}_{|C}}$ is a rational function on $C$ in the rational functions $x_{i,\ell}$ where $1\leq i\leq k-1$ and $\ell\in [n]$ or $i=k$ and $1\leq \ell\leq k-1$. 
So we have an inclusion of function fields $F(C)\subset F(x_{i,\ell})$, where $x_{i,\ell}$ are the $(k-1)(n+1)$ coordinates above. Hence $\dim_{F^{k\times n}} C = \mathrm{transdeg}_F(F(C))-1\leq (k-1)(n+1)-1$ and then $\mathrm{codim}(C)\geq (kn-1)-(k-1)(n+1)-1 = n-k+1$.  
\end{proof} 
\end{proposition}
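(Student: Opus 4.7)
The plan is to prove the two bounds by entirely separate arguments.

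For the upper bound $\mathrm{codim}(P_{k,n}) \leq n$, I would exhibit an explicit codimension-$n$ linear subspace inside $P_{k,n}$: the locus where one chosen row of $M$ (say the last) vanishes identically. Every $k \times k$ submatrix of such a matrix has a zero row and hence vanishing permanent, so the subspace is contained in $P_{k,n}$, yielding $\mathrm{codim}(P_{k,n}) \leq n$.

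For the lower bound $\mathrm{codim}(P_{k,n}) \geq n - k + 1$, I would induct on $k$, with base case $k = 2$ supplied by Theorem \ref{theo: 2xn} (which gives codimension $n \geq n - 1$). For the inductive step, fix an irreducible component $C$ of $P_{k,n}$ with generic point $A$ and dichotomize according to whether some $(k-1) \times (k-1)$ submatrix of $M$ has nonzero permanent at $A$. In the \emph{negative} case, every $(k-1) \times (k-1)$ permanent of $M$ vanishes generically on $C$, so $C \subset \pi^{-1}(P_{k-1,n})$ where $\pi \colon F^{k \times n} \to F^{(k-1) \times n}$ deletes, say, the last row. By induction $P_{k-1,n}$ has codimension at least $n - (k-1) + 1 = n - k + 2$ in $F^{(k-1) \times n}$, and this codimension is preserved under the linear preimage; hence $\mathrm{codim}(C) \geq n - k + 2 > n - k + 1$.

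In the \emph{positive} case, after permuting rows and columns I may assume the top-left block $M_{[k-1],[k-1]}$ has nonzero permanent at $A$. For each $j \in \{k, \ldots, n\}$, expanding the vanishing permanent of $M_{[k], [k-1] \cup \{j\}}$ along the last column,
\[
0 = \mathrm{perm}(M_{[k], [k-1] \cup \{j\}}) = \sum_{i=1}^{k} x_{i,j} \cdot \mathrm{perm}(M_{[k] \setminus \{i\}, [k-1]}),
\]
isolates $x_{k,j}$ with coefficient $\mathrm{perm}(M_{[k-1],[k-1]})$, which is nonzero at $A$. Hence $x_{k,j}|_C$ is a rational function of the entries indexed by $T = ([k-1] \times [n]) \cup (\{k\} \times [k-1])$, giving an inclusion of function fields and $\dim C \leq |T| = (k-1)(n+1)$. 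Therefore $\mathrm{codim}(C) \geq kn - (k-1)(n+1) = n - k + 1$, completing the induction since every component satisfies the bound.

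I expect the first branch of the dichotomy to be the main conceptual obstacle: one must verify that a component on which all $(k-1) \times (k-1)$ permanents vanish really is controlled (via pullback) by some $(k-1) \times n$ permanental variety, so that the inductive hypothesis applies cleanly and the fibered direction is accounted for correctly. The second branch is then a routine elimination-of-variables argument riding on the generic invertibility of the $(k-1) \times (k-1)$ permanent at $A$.
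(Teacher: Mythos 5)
Your proof is correct and follows essentially the same route as the paper's: the same codimension-$n$ linear subspace for the upper bound, and for the lower bound the same induction on $k$ with the same dichotomy (whether some $(k-1)\times(k-1)$ permanent is generically nonzero on a given component), handling the degenerate case by pulling back $P_{k-1,n}$ and the nondegenerate case by the identical Laplace-expansion elimination of $x_{k,j}$ to bound the transcendence degree of $F(C)$ by $(k-1)(n+1)$. The only cosmetic difference is that you phrase the degenerate case as an inclusion $C\subset\pi^{-1}(P_{k-1,n})$ rather than as $C$ being a cone over a component of $P_{k-1,n}$, which is a cleaner formulation of the same fact.
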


\begin{corollary}\label{cor:perm is irreducible}
Let $F$ be an arbitrary field, $k \geq 2$, and $M$ a generic $k\times k$ square matrix over $F$. Let $P = \lbrace \mathrm{perm}(M)=0\rbrace$ be the permanental hypersurface, and denote by $\mathrm{Sing}(P)$ its singular locus. Then $\mathrm{codim} \ \mathrm{Sing}(P)\geq 4$. In particular, $\mathrm{perm}(M)$ is an irreducible polynomial over $F$. 
\begin{proof}
The singular locus of the permanental hypersurface is 
\[
\mathrm{Sing}(P) = P_{k-1,k} = \lbrace \mathrm{prk}(M)\leq k-2\rbrace. 
\]
A similar strategy as in the proof of Proposition \ref{lower-upperboundscodim} shows
that $\mathrm{codim}(P_{k-1,k})\geq 4$. If $\mathrm{perm}(M)$ were reducible over $F$, 
then the codimension of $\mathrm{Sing}(P)$ would be at most $2$. Thus $\mathrm{perm}(M)$ is irreducible over $F$. 
\end{proof}
\end{corollary}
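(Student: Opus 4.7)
The plan is to adapt the dimension-counting argument from the proof of Proposition~\ref{lower-upperboundscodim}, now exploiting the vanishing of $(r+1)\times(r+1)$ rather than $k\times k$ permanents, to show that $\mathrm{codim}\,\mathrm{Sing}(P)\geq 4$. The first step is the set-theoretic identification $\mathrm{Sing}(P)=\{M\in F^{k\times k}:\mathrm{prk}(M)\leq k-2\}$, which follows at once from the fact that $\partial\mathrm{perm}(M)/\partial x_{ij}$ equals the permanent of the $(k-1)\times(k-1)$ submatrix obtained by deleting row $i$ and column $j$; the task is then to bound the codimension of this locus below by $4$.

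Let $C$ be an irreducible component of $\mathrm{Sing}(P)$ and denote by $r$ the generic permanental rank on $C$, so $0\leq r\leq k-2$. If $r=0$, then $C=\{0\}$ and $\mathrm{codim}\,C=k^2\geq 4$. Otherwise, choose $I,J\subset\{1,\ldots,k\}$ of size $r$ with $\mathrm{perm}(M_{I,J})$ not identically zero on $C$, and observe that, for every pair $(i,j)$ with $i\notin I$ and $j\notin J$, the submatrix on rows $I\cup\{i\}$ and columns $J\cup\{j\}$ has size $r+1\leq k-1$, so its permanent vanishes on $C$. Expanding this permanent along row $i$ isolates $x_{i,j}$ with nonzero coefficient $\mathrm{perm}(M_{I,J})$, and hence expresses $x_{i,j}$ as a rational function of the entries of rows indexed by $I$ together with $\{x_{i,\ell}:\ell\in J\}$. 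Since each dependent variable $x_{i,j}$ with $i\notin I$, $j\notin J$ appears in only one such relation and the other terms involve only coordinates indexed by $(i',j')$ with $i'\in I$ or $j'\in J$, the field $F(C)$ lies inside a purely transcendental extension of $F$ of transcendence degree $rk+r(k-r)=r(2k-r)$. Consequently $\dim C\leq r(2k-r)$ and $\mathrm{codim}\,C\geq k^2-r(2k-r)=(k-r)^2\geq 4$.

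Irreducibility of $\mathrm{perm}(M)$ then follows by a standard argument: any factorization $\mathrm{perm}(M)=fg$ with nonconstant $f,g\in F[x_{ij}]$ would force either $V(f)\cap V(g)$ (of codimension at most $2$ when $\gcd(f,g)=1$, after passing to the algebraic closure) or the zero locus of a nontrivial common factor of $f$ and $g$ (of codimension $1$) to sit inside $\mathrm{Sing}(P)$, contradicting the codimension bound just established. The one subtle point is the independence of the $(k-r)^2$ relations used in the transcendence-degree count; this is essentially automatic here because each dependent variable $x_{i,j}$ is isolated in its own $(r+1)\times(r+1)$ expansion and the remaining terms involve only the declared ``free'' coordinates, so there is no risk of circular dependencies and the heuristic count translates cleanly into the bound on $\dim C$.
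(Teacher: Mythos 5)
Your proof is correct and fills in the details that the paper leaves implicit with the phrase ``a similar strategy as in the proof of Proposition~\ref{lower-upperboundscodim}.'' The approach is the same in spirit — solve for dependent coordinates via vanishing permanents and bound the transcendence degree of $F(C)$ — but you organize it more cleanly than the paper's template suggests: rather than the two-case induction of Proposition~\ref{lower-upperboundscodim} (cone case versus direct count), you stratify each irreducible component $C$ directly by its generic permanental rank $r\leq k-2$ and solve for the $(k-r)^2$ entries $x_{i,j}$ with $i\notin I$, $j\notin J$ all at once, obtaining $\mathrm{codim}\,C\geq (k-r)^2\geq 4$ in one stroke. This unifies the paper's implied case split and makes the mechanism transparent. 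Two small expository points: first, the $(r+1)\times(r+1)$ permanent of $M_{I\cup\{i\},J\cup\{j\}}$ vanishes on $C$ because $r$ is the generic permanental rank on $C$ (by semicontinuity and irreducibility), not because $r+1\leq k-1$; the latter inequality alone would not force a \emph{smaller} permanent to vanish. Second, the assertion that ``$F(C)$ lies inside a purely transcendental extension'' should be read as: the projection of $C$ onto the coordinate subspace of the $r(2k-r)$ free entries is birational onto its image, so $\mathrm{trdeg}_F F(C)\leq r(2k-r)$ and hence $\dim C\leq r(2k-r)$. This is the same level of looseness the paper itself exhibits in its proof of Proposition~\ref{lower-upperboundscodim} with the claimed inclusion $F(C)\subset F(x_{i,\ell})$, and the dimension bound is unaffected.
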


\begin{lemma}\label{lem:perms are l.i.}
Let $M$ be a generic $k\times n$ matrix and let $h\leq \min\lbrace k, n\rbrace$. Then the 
$h\times h$ permanents of $M$ are linearly independent. 
\begin{proof}
Every such permanent is of the form $\mathrm{perm}(M')$ for some $h\times h$ submatrix $M'$ of $M$. Hence it is uniquely determined by the monomial given by the product of the elements in the main diagonal of $M'$.  This monomial does not appear in any other $\mathrm{perm}(M'')$ for $M''\neq M'$. 
\end{proof}
\end{lemma}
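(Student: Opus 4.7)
The plan is to identify, for each $h\times h$ submatrix of $M$, a distinguished monomial appearing in its permanent but in no other $h\times h$ permanent; linear independence will then follow at once by extracting coefficients. Concretely, an $h\times h$ submatrix is specified by an $h$-subset of rows $I=\{i_1<\cdots<i_h\}$ and an $h$-subset of columns $J=\{j_1<\cdots<j_h\}$, and I denote the corresponding submatrix by $M_{I,J}$. The candidate monomial attached to $M_{I,J}$ is the main-diagonal monomial $m_{I,J} := x_{i_1,j_1}\cdots x_{i_h,j_h}$, which appears with coefficient $1$ as the identity-permutation term in the expansion of $\mathrm{perm}(M_{I,J})$.

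The key observation I would then establish is that $m_{I,J}$ uniquely determines the pair $(I,J)$: the set of first indices among the variables occurring in $m_{I,J}$ is exactly $I$, and the set of second indices is exactly $J$. Thus if $(I',J')\neq (I,J)$, every term of $\mathrm{perm}(M_{I',J'})$ --- which has the form $x_{i'_1,j'_{\sigma(1)}}\cdots x_{i'_h,j'_{\sigma(h)}}$ for some $\sigma\in\mathfrak S_h$ --- has first-index set $I'$ and second-index set $J'$, and so cannot coincide with $m_{I,J}$. Given a hypothetical relation $\sum_{(I,J)} c_{I,J}\,\mathrm{perm}(M_{I,J})=0$, reading off the coefficient of $m_{I_0,J_0}$ forces $c_{I_0,J_0}=0$ for every pair $(I_0,J_0)$. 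No serious obstacle arises here: the argument reduces to unpacking the definition of the permanent as a sum over $\mathfrak S_h$ in which all coefficients equal $1$, together with the fact that the entries $x_{i,j}$ of a generic matrix are distinct indeterminates.
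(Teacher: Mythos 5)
Your argument is correct and is exactly the paper's approach: you pick out the main-diagonal monomial of each $h\times h$ submatrix, observe that it appears with coefficient one in the corresponding permanent, and that it cannot appear in any other $h\times h$ permanent because the row-index set and column-index set of every term of $\mathrm{perm}(M_{I',J'})$ are $I'$ and $J'$. You simply make explicit the "this monomial does not appear elsewhere" step that the paper leaves implicit.
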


Proposition \ref{lower-upperboundscodim} applies to ideals of maximal minors as well. In fact, it is very weak 
when $n=k+1$. In contrast, we propose the following

\begin{conj}\label{conj: ci}
Let $M$ be a generic $k\times (k+1)$ matrix with $k\geq 2$. Then $P_{k,k+1}$ is a complete intersection. In particular, $\mathrm{codim}(P_{k,k+1}) = k+1$. 
\end{conj}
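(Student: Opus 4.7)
The target is to show that the $k+1$ maximal $k\times k$ permanents of $M$ form a regular sequence in the polynomial ring; by Lemma \ref{lem:perms are l.i.} they are already linearly independent, and by Proposition \ref{lower-upperboundscodim} applied with $n=k+1$ we have $\mathrm{codim}(P_{k,k+1})\leq k+1$, so the entire task is to prove the matching lower bound $\mathrm{codim}(P_{k,k+1})\geq k+1$. The plan is to attack this via the $\CC^{*}$-action of \S\ref{sec:torus action} together with the structural correspondence of Theorem \ref{prop:irred comps of k x k+1}, and to run an induction on $k$ whose base cases $k\leq 5$ are already supplied by Theorem \ref{codims}.

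First I would set up the $\CC^{*}$-action on $\CC^{k\times(k+1)}$ from \S\ref{sec:torus action} and describe explicitly the decomposition of the ambient tangent bundle into weight eigenbundles. By Theorem \ref{prop:irred comps of k x k+1}, every irreducible component $X$ of $Y=P_{k,k+1}$ equals the closure of the total space $T^1_{X^T,Y}$ of its weight-one tangent subbundle over some open set of a component $X^T\subset Y^T$, so that $\dim X=\dim X^T+\mathrm{rk}\,T^1_{X^T,Y}$. The target inequality becomes
\[
\dim X^T + \mathrm{rk}\,T^1_{X^T,Y}\;\leq\; k(k+1)-(k+1)\;=\;k^2-1
\]
uniformly over components $X$. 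I would split the analysis by the support pattern of the generic fixed matrix in $X^T$: strata whose support is contained in a proper submatrix give rise to components that are essentially cones over irreducible components of $P_{k-1,k}$ or $P_{k-1,k+1}$, hence are controlled by the inductive hypothesis via Theorem \ref{thm:codimP_kn}; the genuinely new strata—those whose support meets every row and every column—require a direct computation of the rank of the weight-one summand of the tangent space, which amounts to a linear-algebraic problem about the rows of the Jacobian of the $k+1$ maximal permanents evaluated at a fixed matrix.

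The main obstacle, I expect, is the combinatorial explosion of these non-inductive strata and the absence of a clean closed-form description of the weight-one tangent rank. For $k=5,6$ the paper already relies on explicit \texttt{Macaulay2} computations (see \S\ref{subsec45}, \S\ref{subsec56}), and no pattern jumps out that obviously scales. My best guess for a uniform argument is to isolate a canonical extremal stratum, likely one whose support forms a maximally balanced bipartite graph on $[k]\sqcup[k+1]$, and show via a local exchange argument that every other stratum can be perturbed to it without decreasing $\dim X^T+\mathrm{rk}\,T^1_{X^T,Y}$, then verify the bound for the extremal case alone. A complementary fallback is to seek an explicit flat degeneration of $P_{k,k+1}$ to an initial scheme of codimension $k+1$; this would bypass the component analysis, but unlike the determinantal setting the permanent expansion is cancellation-free, so monomial initial ideals tend to be large and non-prime, and any viable degeneration likely requires a carefully chosen non-monomial weight or a Gröbner basis tailored to the permanental structure.
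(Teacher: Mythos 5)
The statement you were asked to prove is a \emph{conjecture} of the paper, not one of its theorems: the paper establishes it only for $2\leq k\leq 5$ (Theorem~\ref{theo: 2xn}, Proposition~\ref{cases:3x4 and 4x5}, Proposition~\ref{5x6 case}), and the cases $k=4,5$ are certified partly by explicit \texttt{Macaulay2} computations (\S\ref{subsec45}, \S\ref{subsec56}). So there is no ``paper's own proof'' to compare against, and the honest benchmark is whether your proposal closes the open problem. It does not, and you say so yourself. Your setup is correct and matches the paper's machinery: the upper bound $\mathrm{codim}\leq k+1$ is immediate from Proposition~\ref{lower-upperboundscodim}, a height-$(k+1)$ ideal generated by $k+1$ elements is automatically a complete intersection, and the torus-action correspondence (Theorem~\ref{prop:irred comps of k x k+1}, Corollary~\ref{cor: codim for k x k+1}) does reduce the problem to bounding $\dim X^T+\mathrm{rk}\,T^1_{X^T,Y}$ over all fixed strata. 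The type-$0$, type-$(k+1)$, and type-$(k-1)$ components are handled by the paper's arguments and induction, and Proposition~\ref{prop: no component corresponding to rank 1} rules out type $k$.

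The genuine gap is exactly where you flag it: for components of type $i$ with $2\leq i\leq k-2$ (and in particular for the Kirkup-type components in $\mathcal X_1$) one needs $\mathrm{codim}_{V^T}X^T\geq k+1-i$, and there is at present no uniform argument giving this. The paper's Corollary~\ref{cor: irred corresponding to line bundles} only yields $\mathrm{codim}\geq k+1$ for type one because $\mathcal X_1$ is \emph{a priori} a divisor, but already for type $2$ the needed estimate $\mathrm{codim}_{V^T}\mathcal X_2\geq k-1$ is verified only computationally for $k=4,5$, and the tables make clear that which $\mathcal X_i$ are nonempty does not follow a monotone pattern. Your two proposed repairs — an exchange/monotonicity argument toward a balanced extremal stratum, and a flat degeneration to an initial scheme of the right codimension — are reasonable heuristics but are not worked out, and the degeneration route in particular faces the obstruction you name: the cancellation-free expansion of the permanent makes generic initial ideals large and far from prime, so there is no analogue of the determinantal straightening law or Sagbi degeneration to lean on. In short, your proposal reconstructs the paper's framework faithfully and locates the true bottleneck, but it is a research plan, not a proof; the conjecture remains open for $k\geq 6$.
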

This conjecture holds true for $k=2$. One can show the following result. 

\begin{proposition}\label{algebraicindependence}
Let $M$ be a generic $k\times (k+1)$ matrix with $k\geq 1$. Then the $k\times k$ permanents of $M$ are algebraically independent. 
\begin{proof}
The statement for $k=1$ is obvious. Fix $k\geq 2$, let $M = (x_{ij})$ and $N$ be the $(k+1)\times (k+1)$ matrix obtained from $M$ by adding a row of $(k+1)$ extra variables $y_{1},\ldots, y_{k+1}$.
Let $P_N = \mathrm{perm}(N)$ be the permanent polynomial of $N$ in the $(k+1)^2$ variables $x_{ij}, y_{\ell}$. Then the $k\times k$ permanents of $M$ are the $(k+1)$ partial derivatives $\partial P_N/\partial_{y_{\ell}}$ of $P_N$. The main result in \cite{MR2005} is proven showing that the Hessian matrix of $P_N$ has nonzero determinant. This is equivalent (see \cite[\S 7]{libro_Russo}) to saying that the first partial derivatives of $P_N$ are algebraically independent. Hence any subset of first partial derivatives of $P_N$ consists of algebraically independent elements. 
\end{proof}
\end{proposition}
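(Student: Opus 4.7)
The plan is to apply the Jacobian criterion for algebraic independence: in characteristic zero, polynomials $f_1,\ldots,f_r$ are algebraically independent over $F$ if and only if their Jacobian matrix has rank $r$. Since the non-vanishing of a given $r\times r$ minor is an open condition, it suffices to exhibit a single specialization of $M$ at which some $(k+1)\times(k+1)$ minor of the Jacobian of the $k\times k$ permanents of $M$ is nonzero.

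Denote by $p_j$ the $k\times k$ permanent of the submatrix of $M$ obtained by deleting column $j$, so that $p_1,\ldots,p_{k+1}$ are the polynomials one wishes to prove algebraically independent. From the definition of the permanent one reads off the basic formula: for $s\neq j$, the partial derivative $\partial p_j/\partial x_{r,s}$ equals the $(k-1)\times(k-1)$ permanent of the submatrix of $M$ obtained by deleting row $r$ and both columns $j$ and $s$, while $\partial p_j/\partial x_{r,j}=0$ for every $r$.

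The main step is to specialize $M$ to the identity $I_k$ padded on the right by a zero column, that is, $x_{i,i}=1$ for $1\le i\le k$ and every other entry equal to zero, and then extract from the resulting numerical Jacobian the $(k+1)\times(k+1)$ submatrix with columns indexed by the variables $x_{1,1},\,x_{1,k+1},\,x_{2,k+1},\ldots,x_{k,k+1}$. A direct calculation using the formula above yields, at this point: $\partial p_j/\partial x_{i,k+1}=\delta_{ij}$ for $j\le k$, since the deletion leaves a $(k-1)\times(k-1)$ identity block exactly when $i=j$, while otherwise row $j$ of the leftover block is entirely zero; $\partial p_{k+1}/\partial x_{i,k+1}=0$ by inspection, since $p_{k+1}$ does not involve the $(k+1)$-th column; $\partial p_{k+1}/\partial x_{1,1}=1$, coming from the remaining $(k-1)\times(k-1)$ identity; and $\partial p_j/\partial x_{1,1}=0$ for every $1\le j\le k$, trivially when $j=1$ and otherwise because the leftover $(k-1)\times(k-1)$ block still contains the (zero) column $k+1$. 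The resulting $(k+1)\times(k+1)$ matrix is a cyclic permutation matrix, and hence has determinant $\pm 1\neq 0$.

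I do not anticipate a genuine obstacle beyond careful bookkeeping of the row and column deletions; the crucial observation is that the identity-padded-by-zero specialization is sparse enough that almost all partial derivatives vanish, isolating a near-permutation submatrix. A more elegant alternative would be to embed $M$ as the first $k$ rows of a $(k+1)\times(k+1)$ matrix $N$ with a new last row of indeterminates $y_1,\ldots,y_{k+1}$, observe that $p_j=\partial\,\mathrm{perm}(N)/\partial y_j$, and then invoke the theorem of Mignon--Ressayre that the Hessian of $\mathrm{perm}(N)$ is non-singular in order to deduce algebraic independence of all the first partials, hence in particular of the $p_j$'s. The direct Jacobian computation above, by contrast, has the advantage of being self-contained.
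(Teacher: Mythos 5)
Your main argument is correct, and it takes a genuinely different route from the paper. The paper embeds $M$ as the top $k$ rows of a $(k+1)\times(k+1)$ matrix $N$ with new variables $y_1,\ldots,y_{k+1}$ in the last row, observes that the maximal permanents of $M$ are the partials $\partial\,\mathrm{perm}(N)/\partial y_j$, and then cites the Mignon--Ressayre theorem (nonvanishing of the permanental Hessian) together with the standard equivalence between a polynomial having nondegenerate Hessian and its first partials being algebraically independent. This is exactly the ``more elegant alternative'' you mention at the end of your write-up; the paper uses only that. Your primary proof instead applies the Jacobian criterion directly and certifies rank $k+1$ by evaluating at the explicit point $M = (I_k \mid 0)$. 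I checked the bookkeeping: with $p_j$ the permanent omitting column $j$, at this specialization one indeed gets $\partial p_j/\partial x_{i,k+1} = \delta_{ij}$ for $j\le k$ (the leftover block is $I_{k-1}$ exactly when $i=j$ and has a zero row otherwise), $\partial p_{k+1}/\partial x_{i,k+1}=0$, $\partial p_{k+1}/\partial x_{1,1}=1$, and $\partial p_j/\partial x_{1,1}=0$ for $j\le k$ (trivial for $j=1$; for $j\ge 2$ the leftover block retains the zero column $k+1$). The $(k+1)\times(k+1)$ submatrix on columns $x_{1,1},x_{1,k+1},\ldots,x_{k,k+1}$ is the cyclic permutation matrix, with determinant $\pm 1$. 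What each approach buys: yours is elementary and self-contained, avoiding the deep Mignon--Ressayre input entirely, at the cost of a sparse-specialization computation; the paper's is shorter given the citation, and it yields the stronger statement that \emph{all} $(k+1)^2$ first partials of $\mathrm{perm}(N)$ are algebraically independent, from which the claim about the $p_j$'s follows as a trivial subset. Both arguments require characteristic zero, which is in force in that section of the paper.
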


\begin{remark}
By Proposition \ref{algebraicindependence}, there is no analogue in the permanental case of Pl\"{u}cker coordinates of minors of a $k\times (k+1)$ matrix. However, permanents are generally not algebraically independent. For instance, one can check that 
the $10$ permanents of a $2\times 5$ generic matrix are algebraically dependent. 
\end{remark}

Next, we prove that the linear spaces 
given by the vanishing of a row are indeed irreducible components of $P_{k,k+1}$. Due to unmixedness, to prove Conjecture \ref{conj: ci} it would be enough to show that $P_{k,k+1}$ is arithmetically Cohen-Macaulay. 

\begin{remark}
In the case of ideals of minors of fixed size $k$ of any generic matrix, one way to show that they are Cohen-Macaulay ideals is to show that the corresponding quotient ring is of the form $S^G\subset S$, where $S$ is a polynomial ring and $S^G$ is the subring of invariants under the action of the group $G=\mathrm{GL}(k,F)$. This approach cannot work in the case of permanents, because we know that $P_{k,k+1}$ must be reducible by the next proposition. 
\end{remark}

\begin{proposition}\label{linearspaces}
The variety $P_{k,k+1}\subset F^{k\times (k+1)}$ has at least $k$ linear spaces among its irreducible components in codimension $k+1$. 
\begin{proof}
Let $M$ be a $k\times (k+1)$ of the form
\[
M = \begin{pmatrix}
x_{1,1} & x_{1,2} & \ldots & x_{1,k+1} \\
x_{2,1} & x_{2,2} & \ldots & x_{2, k+1} \\
\vdots & \vdots & \vdots & \vdots  \\
x_{k,1} & x_{k,2} & \ldots & x_{k,k+1} \\
\end{pmatrix}.
\]
Consider the projective variety $\PP(P_{k,k+1})\subset \PP^{k(k+1)-1}$. We show that the $k$ linear spaces in $\PP(P_{k,k+1})$ defined by the vanishing of one row of $M$ are irreducible components of $\PP(P_{k,k+1})$. 

Up to the action of the symmetric group permuting rows, it is enough to show that the linear space $L_k$ whose defining ideal is $(x_{k,1},\ldots, x_{k,k+1})$ is an irreducible component of $P_{k,k+1}$.  

We fix a point $p\in \PP(P_{k,k+1})$ whose coordinates are $x_{i,j}(p)=1$ for all $1\leq i\leq k-1$ and $1\leq j\leq k+1$, and zero otherwise. We work inside the affine chart $U = \lbrace x_{1,1}\neq 0\rbrace\cong F^{k(k+1)-1}$ of $\PP^{k(k+1)-1}$.  
Inside $U$, we change coordinates so that $p$ is the origin of $U$. In this coordinates $\widetilde{x}_{i,j}$, the variety $P_{k,k+1}\cap U$ is defined by ideal $\widetilde{I}$ generated the $k\times k$ permanents of the following $k\times (k+1)$ matrix 
\[
\widetilde{M} = \begin{pmatrix}
1 & \widetilde{x}_{1,2}+1 & \ldots & \widetilde{x}_{1,k+1}+1 \\
\widetilde{x}_{2,1}+1 & \widetilde{x}_{2,2}+1 & \ldots & \widetilde{x}_{2, k+1}+1 \\
\vdots & \vdots & \vdots & \vdots  \\
\widetilde{x}_{k,1} & \widetilde{x}_{k,2} & \ldots & \widetilde{x}_{k,k+1} \\
\end{pmatrix}.
\]
Let $\widetilde{I}^{\mathrm{lin}}$ be the ideal generated by the linear part of all $g\in \widetilde{I}$. 
T
he affine tangent space $T_p(\PP(P_{k,k+1}))$ to $p$ at $\PP(P_{k,k+1})$ 
is given by 
$\mathrm{Spec}(F[\widetilde{x}_{i,j}, (i,j)\neq (1,1)]/\widetilde{I}^{\mathrm{lin}})$
. 
Let $g_j$ for $j=1,\dots,k+1$ 
be the permanents of $\widetilde{M}$
. 
The linear part of $g_j$ is
\[
\widetilde{x}_{k,1}+\widetilde{x}_{k,2}+\cdots + \widehat{\widetilde{x}_{k,i}} + \cdots + \widetilde{x}_{k,k+1},
\]
where 
$\widehat{\widetilde{x}_{k,i}}$ means that we are omitting that summand. Since we are in characteristic zero, these linear forms are 
linearly independent. 

Hence, the tangent space $T_p(\PP(P_{k,k+1}))$ is of dimension at most the dimension of $L_k$. As $L_k\subset \PP(P_{k,k+1})$ the tangent space has to be of the same dimension as $L_k$. Thus, $p$ is a smooth point of $\PP(P_{k,k+1})$ belonging to a unique component of dimension equal to the dimension of $L_k$. It follows that $L_k$ is an irreducible component of $ \PP(P_{k,k+1})$.
%
%
\end{proof}
\end{proposition}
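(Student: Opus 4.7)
The plan is to exhibit, for each $i\in\{1,\dots,k\}$, the linear subspace $L_i\subset F^{k\times (k+1)}$ cut out by the vanishing of the $i$-th row, and to argue that each $L_i$ is an irreducible component of $P_{k,k+1}$. Each $L_i$ has codimension $k+1$ in $F^{k\times (k+1)}$, and it lies inside $P_{k,k+1}$ because every $k\times k$ submatrix of a member of $L_i$ inherits a zero row, making all its maximal permanents vanish. Since the $\mathfrak{S}_k$-action permuting rows carries $L_i$ to $L_j$ and preserves $P_{k,k+1}$, it is enough to handle $L_k$.

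To show that $L_k$ is actually a component (rather than being strictly contained in a larger one), I would locate a smooth point $p\in L_k$ of $P_{k,k+1}$ whose Zariski tangent space has the same dimension as $L_k$. Once this is in place, the unique irreducible component $C$ of $P_{k,k+1}$ through $p$ satisfies $\dim C\le \dim T_p P_{k,k+1}=\dim L_k$; but $L_k$ is an irreducible subvariety of $P_{k,k+1}$ passing through $p$, so $L_k\subseteq C$, and the dimension count forces $L_k=C$. The natural candidate for $p$ is the matrix with $1$'s in the first $k-1$ rows and $0$'s in the last row. After passing to the affine chart $\{x_{1,1}\ne 0\}$ and translating $p$ to the origin, the defining ideal of $P_{k,k+1}$ near $p$ is generated by the $k+1$ maximal permanents $g_1,\dots,g_{k+1}$ of the matrix $\widetilde{M}$ written in the excerpt.

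Next I would extract the linear part of each $g_j$ at the origin. Using that the permanent is multilinear in rows, $\partial g_j/\partial \tilde{x}_{i,\ell}$ equals the $(k-1)\times(k-1)$ subpermanent of $\widetilde{M}_j$ obtained by removing row $i$ and column $\ell$, evaluated at $p$. For $i\le k-1$ that subpermanent still contains the last row of $\widetilde{M}_j$, which is zero at $p$, so every such partial vanishes; only the partials with respect to $\tilde{x}_{k,\ell}$ survive, each equal to $(k-1)!$, the permanent of the all-ones $(k-1)\times(k-1)$ matrix. In characteristic zero this is nonzero, so the linear part of $g_j$ is a nonzero scalar multiple of $\sum_{\ell\ne j}\tilde{x}_{k,\ell}$, involving only the $k+1$ variables of the last row.

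The final step is a short linear algebra check: the $(k+1)\times(k+1)$ coefficient matrix of these $k+1$ forms in the variables $\tilde{x}_{k,1},\dots,\tilde{x}_{k,k+1}$ is $J-I$, whose determinant $k\cdot(-1)^{k}$ is nonzero in characteristic zero. Hence $\widetilde{I}^{\mathrm{lin}}$ has rank $k+1$, the codimension of $T_p(P_{k,k+1})$ is exactly $k+1=\mathrm{codim}\,L_k$, and the dimension-matching argument above forces $L_k$ to be an irreducible component, proving the proposition. I expect the main conceptual obstacle to be the observation that no linear contribution can come from variables in rows $1,\dots,k-1$; this is precisely the vanishing of the relevant $(k-1)\times(k-1)$ subpermanents at $p$, made possible by the careful choice of $p$ with the entire last row equal to zero and all other entries equal to $1$.
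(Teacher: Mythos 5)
Your proposal is correct and follows essentially the same route as the paper: pick the point $p$ with last row zero and $1$'s elsewhere, compute the linear parts of the $k+1$ maximal permanents in the local chart, and conclude via a tangent-space dimension count that $L_k$ is a component. You actually supply some details the paper elides — the explicit multilinearity argument showing why partials with respect to variables in rows $1,\dots,k-1$ vanish at $p$, the scalar $(k-1)!$, and the determinant $k\cdot(-1)^k$ of $J-I$ certifying linear independence — but these are refinements of the same proof, not a different one.
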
 

\begin{remark}\label{CompCol}
Other types of irreducible components arise
from the vanishing of the generic linear forms in a column. One can check that they all have codimension $k+1$ as well. 
\end{remark}

\begin{proposition}\label{cases:3x4 and 4x5}
For $k=3$ or $4$, let $M$ be a generic $k\times (k+1)$ matrix of linear forms. Then the codimension of $P_{k,k+1} = \lbrace \mathrm{prk}(M)\leq k-1\rbrace \subset F^{k\times (k+1)}$ is $k+1$. Equivalently, Conjecture \ref{conj: ci} holds for $2\leq k\leq 4$ and $P_{k,k+1}$ is a complete intersection. 
\begin{proof}
Let $k=3$. Define $L$ to be the linear space
transforming the matrix $M=(x_{ij})$ into the following {\it circulant Hankel matrix}
\[
H_3 = \begin{pmatrix}
x_{11} & x_{12} & x_{13} & x_{14} \\
x_{12} & x_{13} & x_{14} & x_{15} \\
x_{13} & x_{14} & x_{15} & x_{11} \\
\end{pmatrix}.
\]
Let $V_3 = P_{3,4}\cap L$. The ideal $I_{V_3}$ of $V_3$ is the ideal of $3\times 3$ permanents of $H_3$. Using \texttt{Macaulay2}, we check that $\mathrm{ht}(I_{V_3})=4$. Thus $\dim(Z)= 1$ for all irreducible components $Z\subset V_3$. Therefore,
for any irreducible component $X\subset P_{3,4}$, we have
\[
1=\dim(Z) \geq \dim(X) + \dim(L) - 12 = \dim(X) -7. 
\]
Thus $\dim(X)\leq 8$ and hence $\mathrm{codim}(P_{3,4})\geq 4$. 

Let $k=4$. Define $L$ to be the linear space
transforming the matrix $M=(x_{ij})$ into the following {\it circulant Hankel matrix}
\[
H_4 = \begin{pmatrix}
x_{11} & x_{12} & x_{13} & x_{14} & x_{15} \\
x_{12} & x_{13} & x_{14} & x_{15} & x_{11} \\
x_{13} & x_{14} & x_{15} & x_{11} & x_{12} \\
x_{14} & x_{15} & x_{11} & x_{12} & x_{13} \\
\end{pmatrix}. 
\]
Let $V_4 = P_{4,5}\cap L$. The ideal $I_{V_4}$ of $V_4$ is the ideal of $4\times 4$ permanents of $H_4$. Using \texttt{Macaulay2}, we check that $\mathrm{ht}(I_{V_4})=5$. Thus $\dim(V_4)=0$. On the other hand, for any irreducible component $X\subset P_{4,5}$, we have 
\[
0=\dim(V_4) \geq \dim(X) + \dim(L) - 20 = \dim(X) - 15. 
\]
Thus $\dim(X)\leq 15$ and hence $\mathrm{codim}(P_{4,5})\geq 5$.\end{proof}
\end{proposition}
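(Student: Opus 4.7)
The plan is to reduce the lower bound $\mathrm{codim}(P_{k,k+1}) \geq k+1$ (the reverse inequality already being provided by Proposition \ref{lower-upperboundscodim}) to an explicit \texttt{Macaulay2} computation on a cleverly chosen linear section. The underlying tool is the elementary affine intersection-dimension bound: for any linear subspace $L \subset F^{k \times (k+1)}$ and any irreducible component $X$ of $P_{k,k+1}$, every irreducible component $Z$ of $X \cap L$ satisfies $\dim Z \geq \dim X + \dim L - k(k+1)$. Consequently, if $L$ is chosen so that $\dim(P_{k,k+1} \cap L) \leq \dim L - (k+1)$, then $\dim X \leq k(k+1) - (k+1)$ for every component $X$, which is exactly the desired conclusion.

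For the choice of $L$, I would specialize the generic $k \times (k+1)$ matrix $M$ to a circulant Hankel matrix in $k+1$ free variables $x_1, \ldots, x_{k+1}$ (the entry in position $(i,j)$ being $x_{(i+j-1)\bmod(k+1)}$), so that $L$ has dimension $k+1$. This specialization has two virtues: the ambient dimension remains $k+1$ independently of the shape of $M$, keeping the \texttt{Macaulay2} computation very cheap, and the cyclic symmetry is natural enough that one may hope the permanental ideal does not collapse. After substitution, the $k+1$ size-$k$ permanents become polynomials in $x_1, \ldots, x_{k+1}$ generating an ideal $I_{V_k}$, and the task is to verify that $\mathrm{ht}(I_{V_k}) = k+1$.

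For $k=3$, this amounts to a height-$4$ check in a ring in $5$ variables, so $\dim V_3 = 1$, and the intersection bound yields $\dim X \leq 1 + 12 - 5 = 8$ for every component $X$ of $P_{3,4}$, i.e.\ $\mathrm{codim}(P_{3,4}) \geq 4$. For $k=4$, the needed check is height $5$ in $5$ variables, so $\dim V_4 = 0$; the intersection bound then gives $\dim X \leq 0 + 20 - 5 = 15$, i.e.\ $\mathrm{codim}(P_{4,5}) \geq 5$. In both cases, combined with the upper bound of Proposition \ref{lower-upperboundscodim}, this pins the codimension down to exactly $k+1$. Unmixedness of the complete intersection statement then follows once one knows the $(k+1)$ size-$k$ permanents cut out a scheme of pure codimension $k+1$.

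The main obstacle is conceptual rather than computational: there is no a priori reason why a circulant Hankel section should cut $P_{k,k+1}$ in the expected codimension. A bad choice of $L$ could force components of $P_{k,k+1}$ to meet $L$ in a too-large subvariety, making the intersection bound vacuous; and the cyclic symmetry could conceivably impose unexpected syzygies among the permanents. The crux is therefore the empirical fact that the \texttt{Macaulay2} check succeeds for $k=3,4$; extending the method to larger $k$ would require either a more clever choice of $L$ or a conceptual reason why such a specialization preserves codimension, which is presumably why the authors single out this range in Conjecture \ref{conj: ci}.
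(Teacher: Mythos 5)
Your proposal matches the paper's proof essentially verbatim: both reduce the lower bound $\mathrm{codim}(P_{k,k+1}) \geq k+1$ to a \texttt{Macaulay2} height computation on the permanental ideal of a circulant Hankel section, then transfer the result to all of $P_{k,k+1}$ via the affine intersection-dimension inequality. The only wrinkle is a minor internal inconsistency in your setup: the stated formula $x_{(i+j-1)\bmod(k+1)}$ would give a $4$-variable section when $k=3$, whereas the paper's $H_3$ (and your own dimension count, which correctly uses a $5$-variable ring and yields $\dim X \leq 1 + 12 - 5 = 8$) takes $5$ variables; with that slip corrected, the two arguments coincide.
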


Although we could have employed \texttt{Macaulay2} to compute
directly the codimension of $P_{3,4}$ and $P_{4,5}$, we believe that restricting to a suitable linear space (or, more generally, to a variety) might be a strategy to give the desired lower bound on the codimension. In fact, the approach pursued in Proposition \ref{cases:3x4 and 4x5} comes from the observation that the dimension of the ideal of permanents of a circulant Hankel matrix tends to be small. This could certify the codimension of the original permanental ideal. The behaviour is clear for $2\times 2$ permanents, as shown in the following lemma. 

\begin{lemma}\label{2x2_circulantHankel}
Let $k\geq 2$ and let $S=F[x_{j}]$ be the polynomial ring in the $k+1$ variables $x_{j}$ with $1\leq j\leq k+1$. 
Let $M=(x_{j})$ be a $k\times (k+1)$ circulant Hankel matrix. Let $Q_1=\lbrace \mathrm{prk}(M)\leq 1\rbrace$
be the variety whose ideal is generated by the $2\times 2$ permanents of $M$. Then $\mathrm{codim}(Q_1) = k+1$. 
\begin{proof}
For $k=2$, the statement can be checked similarly as in the proof of Proposition \ref{hankel-2xn}. 
Let $k\geq 3$. It is enough to check that some power of each $x_j$ is in the ideal $I(Q_1)\subset S$. First note that, by definition, all the generators of $I(Q_1)$ are of the form
\[
x_{\ell'}x_{m'} + x_{\ell}x_{m}, \mbox{ where } \ell'+ m'\equiv \ell + m  \mod k+1. 
\]
Vice versa, any equality of the form $\ell'+ m'\equiv \ell + m  \mod k+1$ gives rise to a generator of $I(Q_1)$. To see these statements, note that giving a generator of $I(Q_1)$
is equivalent to giving an arbitrary choice of two rows and two columns. Fix a row $r$, pick two elements on $r$, say $x_{\ell}$ and $x_{\ell'}$ where $\ell'\equiv \ell+h \mod k+1$; we have just selected two columns. Now, choose a second row $r'$, pick the elements $x_{m'}$, where $m'\equiv\ell+s \mod k+1$, and $x_{m}$ where $m\equiv\ell+s+h \mod k+1$; these last two choices are forced as we have already selected the two columns. Notice that the indices of the variables satisfy the desired equation in modular arithmetic. 

For each $1\leq j\leq k+1$, we have distinct generators of the form 
\begin{equation}\label{rel1}
x_j^2 + x_{\ell}x_{m}, 
\end{equation}
\begin{equation}\label{rel2}
x_j^2 + x_{\ell'}x_{m'}, \mbox{ and}
\end{equation}
\begin{equation}\label{rel3}
x_{\ell}x_{m} + x_{\ell'}x_{m'},
\end{equation}
where $\ell, m\neq j$ are possibly the same index; similarly $\ell',m'\neq j$ are possibly the same index. 
Thus $1/2\cdot \eqref{rel1} + 1/2\cdot \eqref{rel2} - 1/2\cdot \eqref{rel3} = x_j^2\in I(Q_1)$ for every $1\leq j\leq k+1$. 
\end{proof}
\end{lemma}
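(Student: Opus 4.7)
The plan is to prove that $I(Q_1)$ is $\mathfrak{m}$-primary where $\mathfrak{m} = (x_1,\ldots,x_{k+1})$, which immediately gives $\mathrm{codim}(Q_1) = k+1$. Since $I(Q_1) \subseteq \mathfrak{m}$ (every generator is homogeneous of degree $2$), it suffices to exhibit, for each $j \in \{1,\ldots,k+1\}$, some power $x_j^N \in I(Q_1)$.

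First I would analyze the combinatorial structure of the generators. In the circulant Hankel matrix $M$, the $(i,j)$-entry is $x_{i+j-1 \bmod (k+1)}$, so any $2 \times 2$ submatrix on rows $i_1 < i_2$ and columns $j_1 < j_2$ produces a permanent of the form $x_a x_b + x_c x_d$ where $a = i_1+j_1-1$, $d = i_2+j_2-1$, $b = i_1+j_2-1$, $c = i_2+j_1-1$ (all taken mod $k+1$); in particular $a + d \equiv b + c \pmod{k+1}$. Conversely, given any quadruple of indices $\ell,m,\ell',m'$ with $\ell+m \equiv \ell'+m' \pmod{k+1}$, I want to show that the binomial $x_\ell x_m + x_{\ell'} x_{m'}$ actually arises as a $2\times 2$ permanent. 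Because we have $k \geq 2$ rows to choose from and $k+1 \geq 3$ columns, with enough freedom modulo $k+1$, one can solve for row and column indices $i_1,i_2,j_1,j_2$ realizing any prescribed pair of pairs with equal sum; I would carry out this verification by writing the congruences and checking they are solvable.

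Second, for each fixed $j$, set $s \equiv 2j \pmod{k+1}$ and seek two distinct unordered pairs $(\ell,m)$ and $(\ell',m')$ from $\{1,\ldots,k+1\} \setminus \{j\}$ with $\ell+m \equiv \ell'+m' \equiv s$. Granting this, the three binomials
\begin{equation*}
g_1 = x_j^2 + x_\ell x_m,\qquad g_2 = x_j^2 + x_{\ell'} x_{m'},\qquad g_3 = x_\ell x_m + x_{\ell'} x_{m'}
\end{equation*}
all lie in $I(Q_1)$, and $\tfrac{1}{2}(g_1 + g_2 - g_3) = x_j^2$, so $x_j^2 \in I(Q_1)$. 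Since this holds for every $j$, the ideal contains $(x_1^2,\ldots,x_{k+1}^2)$, which is already $\mathfrak{m}$-primary of height $k+1$, completing the proof.

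The main obstacle is the combinatorial availability step: showing that for $k \geq 3$ the residue $s \equiv 2j \pmod{k+1}$ is hit by at least two distinct pairs of indices in $\{1,\ldots,k+1\}\setminus\{j\}$. For $k+1 \geq 4$ there are generally $\lfloor (k+1)/2 \rfloor$ or so pairs summing to any given residue, so excising the single index $j$ leaves at least two; I would handle the few edge cases (small $k$, or when $j$ forces a collision) by exhibiting the pairs explicitly. The boundary case $k=2$ is tight and should be dispatched separately, either by direct computation or by invoking the Hankel analysis of Proposition \ref{hankel-2xn}.
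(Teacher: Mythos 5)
Your proposal follows essentially the same route as the paper's proof: the same combinatorial characterization of the $2\times 2$ permanents as binomials $x_{\ell'}x_{m'}+x_\ell x_m$ with $\ell'+m'\equiv\ell+m \pmod{k+1}$, the same three generators $g_1,g_2,g_3$ for each residue $2j$, and the identical linear combination $\tfrac12(g_1+g_2-g_3)=x_j^2$, from which $\sqrt{I(Q_1)}=\mathfrak{m}$ follows. The ``combinatorial availability'' step you flag (that at least two distinct pairs summing to $2j$ and avoiding $j$ exist for $k\geq 3$, and that the resulting binomials genuinely occur as $2\times 2$ permanents of the circulant Hankel matrix) is exactly the point the paper also passes over quickly, so while you would need to carry out that verification, it does not represent a departure from the published argument.
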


\begin{remark}
In comparison, the codimension of the ideal of $2\times 2$ permanents of a generic $k\times (k+1)$ matrix is $k^2-1$ \cite[Theorem 4.1]{LS2000}. 
\end{remark}

\subsection{The nondegenerate permanental ideal}\label{subsec:kirkup ideal}
Kirkup shows that $\mathrm{codim}(P_{3,4})=4$ \cite[\S 7]{K}. For each $k$, he looks at the colon ideal $J_k = I(P_{k,k+1}):(\prod_{i,j}x_{i,j})^{\infty}\supset I(P_{k,k+1})$ \cite[Corollary 10]{K}. This ideal 
unravels a lot of the structure of $I(P_{k,k+1})$, therefore it deserves a definition on its own. 

\begin{definition}
The ideal $J_k = I(P_{k,k+1}):(\prod_{i,j}x_{i,j})^{\infty}\supset I(P_{k,k+1})$ is called the {\it nondegenerate permanental ideal}. The corresponding variety $\mathcal V_k = V(J_k)$ is the {\it nondegenerate permanental variety}. 
\end{definition}

\begin{proposition}[Kirkup]
The nondegenerate permanental variety $\mathcal V_3$ is irreducible of codimension $4$ and degree $66$.  
\end{proposition}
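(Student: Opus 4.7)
The plan is to exploit the unmixedness coming from Proposition \ref{cases:3x4 and 4x5} and then isolate the unique nondegenerate component through the saturation. Since $P_{3,4}$ is a complete intersection of codimension $4$, it is Cohen--Macaulay and hence equidimensional, so every irreducible component of $P_{3,4}$ has codimension exactly $4$. The colon ideal $J_3$ annihilates precisely those components contained in some coordinate hyperplane $\{x_{i,j}=0\}$. Consequently, as soon as we know that $P_{3,4}$ possesses at least one irreducible component not contained in any coordinate hyperplane, $\mathcal{V}_3$ is automatically equidimensional of codimension $4$.

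I would then carry out Kirkup's computation in \texttt{Macaulay2}: build $I(P_{3,4})$ as the ideal of $3\times 3$ permanents of a generic $3\times 4$ matrix of variables, compute the saturation $J_3 = I(P_{3,4}):(\prod_{i,j} x_{i,j})^{\infty}$ via iterated colon ideals against the twelve variables, and read off $\mathrm{codim}\,V(J_3)=4$ and $\deg V(J_3)=66$ from the Hilbert polynomial of $F[x_{i,j}]/J_3$. The degree is then just a finite arithmetic extraction from the Hilbert function once the saturated ideal has been computed.

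The genuinely delicate step is irreducibility, i.e.\ primality of $J_3$. I would attempt this via a Jacobian test at a generic point: exhibit an explicit matrix $p\in\mathcal{V}_3$ with all entries nonzero, compute the Jacobian of a generating set of $J_3$ at $p$, and verify that its corank equals $4$. Combined with the equidimensionality established above, this forces $p$ to lie on a unique irreducible component of $\mathcal{V}_3$; sweeping over a few such smooth test points rules out additional components. A more structural route would be to produce a rational parametrization of $\mathcal{V}_3$ coming from a torus orbit closure of nondegenerate $3\times 4$ matrices with vanishing maximal permanents, forcing irreducibility a priori from the source. I expect this primality check to be the main obstacle, because general complete-intersection theory does not force the number of components: Proposition \ref{linearspaces} and Remark \ref{CompCol} already produce at least seven degenerate components, and confirming that the nondegenerate part consists of a single one requires either a direct computer-algebra certificate or an explicit geometric model.
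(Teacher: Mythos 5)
The paper does not give its own proof of this proposition; it is cited from Kirkup's work \cite{K}, where it is established through an explicit analysis of the minimal primes of the $3\times 3$ permanental ideal of a generic $3\times 4$ matrix. Your plan for the codimension and degree is sound: since $P_{3,4}$ is a complete intersection (Proposition \ref{cases:3x4 and 4x5}), hence Cohen--Macaulay and unmixed with no embedded primes, the saturation $J_3$ kills exactly the components contained in a coordinate hyperplane, so every surviving component has codimension $4$; the Kirkup matrix $\mathcal{K}_{3,4}$ (Lemma \ref{kirkupmatrices}) guarantees there is at least one, and the degree $66$ is read off the Hilbert polynomial (and is consistent with the Bezout count $3 + 4\cdot 3 + 66 = 3^4$ against the row and column components of Proposition \ref{linearspaces} and Remark \ref{CompCol}).

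The genuine gap lies in the irreducibility step. Showing that a few nondegenerate points are smooth with the expected corank proves only that each such point lies on a single component; a finite sweep of test points cannot rule out an unsampled component, and unmixedness controls the dimension but not the number of components. You need a global certificate: compute the minimal primes (or test \texttt{isPrime}) of $J_3$ directly in \texttt{Macaulay2}, which is tractable at this size, or develop your torus-orbit remark into a genuine dominant rational map from an irreducible source onto $\mathcal{V}_3$. You rightly flag this as the hard step, but the proposed Jacobian sweep does not close it.
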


The next lemma, which is implicit in \cite[\S 6]{K}, shows that the nondegenerate permanental variety is non-empty and distinct from the linear spaces of Proposition \ref{linearspaces} or from the 
irreducible components arising from the vanishing of a single column (Remark \ref{CompCol}). 

\begin{lemma}[Kirkup]\label{kirkupmatrices}
Let $k\geq 3$. The {\it Kirkup matrix}
\[
\mathcal K_{k,k+1} := \begin{pmatrix}
1 & 1 & 1 & 1 & 1 & 2-3k \\
\vdots & \vdots & \vdots & \vdots & \vdots & \vdots \\
\vdots & \vdots & \vdots & 1 & 1 & 2-3k \\
1 & 1 & \ldots & 1 & (2-2k) & (k-2)(k-1) \\
1 & 1 & \ldots & 1 & k & (2k-1)(k-2) \\
\end{pmatrix} \in \ZZ^{k\times (k+1)}
\]
is an element of the nondegenerate permanental variety: $\mathcal K_{k,k+1}\in \mathcal V_k$. 
\end{lemma}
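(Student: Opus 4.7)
The plan is to exploit the definition of the saturation ideal $J_k$: geometrically, $\mathcal{V}_k = V(J_k)$ equals the Zariski closure of the locally closed set $P_{k,k+1}\setminus V\bigl(\prod_{i,j} x_{i,j}\bigr)$, so any matrix $M\in P_{k,k+1}$ all of whose entries are nonzero automatically lies in $\mathcal{V}_k$. I would therefore split the verification into two independent claims: (i) every entry of $\mathcal{K}_{k,k+1}$ is nonzero for $k\geq 3$, and (ii) every $k\times k$ permanent of $\mathcal{K}_{k,k+1}$ vanishes. Claim (i) is immediate by inspection, as the distinct entries $1$, $2-3k$, $2-2k$, $(k-2)(k-1)$, $k$, $(2k-1)(k-2)$ are all nonzero integers when $k\geq 3$.

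For (ii), the central structural feature to exploit is that the first $k-1$ columns of $\mathcal{K}_{k,k+1}$ are all equal to the all-ones vector $\mathbf{1}\in F^k$. Writing $a$ for the $k$-th column and $b$ for the $(k+1)$-th column, I would classify the $k+1$ submatrices obtained by deleting a single column. If a column among the first $k-1$ is removed, the submatrix consists of $k-2$ copies of $\mathbf{1}$ together with $a$ and $b$; grouping the permutations by the row $i_a$ routed to $a$ and the row $i_b\neq i_a$ routed to $b$ (the remaining $(k-2)!$ assignments to the $\mathbf{1}$ columns each contributing $1$), one gets
\[
\mathrm{perm} = (k-2)!\,\Biggl[\biggl(\sum_{i} a_i\biggr)\biggl(\sum_{j} b_j\biggr) - \sum_{i} a_i b_i\Biggr].
\]
If instead the $k$-th or $(k+1)$-th column is deleted, the same bookkeeping yields $(k-1)!\,\sum_i b_i$ or $(k-1)!\,\sum_i a_i$, respectively.

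The claim thus reduces to the three scalar identities $\sum_i a_i = 0$, $\sum_i b_i = 0$, and $\sum_i a_i b_i = 0$. These are elementary: one checks $\sum_i a_i = (k-2)+(2-2k)+k = 0$, $\sum_i b_i = (k-2)\bigl[(2-3k)+(k-1)+(2k-1)\bigr] = 0$, and the analogous expansion of $\sum_i a_i b_i$ factors out $(k-2)$ to leave a polynomial in $k$ which collapses identically to $0$. The only subtle point in the whole argument is the permanent bookkeeping in the presence of repeated columns; once that is correctly set up, the specific numerical coefficients in $\mathcal{K}_{k,k+1}$ are seen to be tailored precisely so as to force all three summations to vanish, which completes the proof that $\mathcal{K}_{k,k+1}\in\mathcal{V}_k$.
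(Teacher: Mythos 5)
Your proposal is correct and amounts to a careful expansion of what the paper dispatches in a single line (``It is indeed immediate to show that the Kirkup matrix has all $k\times k$ permanents vanishing''). You follow the same route — verify all $k\times k$ permanents vanish, then invoke the saturation description of $\mathcal V_k$ — but you make explicit two points the paper leaves implicit: that $V(J_k)$ is the closure of $P_{k,k+1}\setminus V(\prod x_{ij})$, so nonvanishing of all entries is the extra condition needed beyond membership in $P_{k,k+1}$; and the permanent-of-repeated-columns formula $(k-2)!\bigl[(\sum_i a_i)(\sum_j b_j)-\sum_i a_i b_i\bigr]$ that reduces everything to the three scalar identities $\sum a_i=\sum b_i=\sum a_ib_i=0$, all of which do check out algebraically for the given entries.
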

\noindent It is indeed immediate to show that the Kirkup matrix has all $k\times k$ permanents vanishing. 

\begin{definition}[{\bf Kirkup components}]
A {\it Kirkup component} is an irreducible component of $\mathcal V_k$ containing the Kirkup 
matrix $\mathcal K_{k,k+1}$. 
\end{definition}

When $k=3$, $\mathcal V_3$ is also the unique Kirkup component. 
Kirkup found equations for $\mathcal V_k$ which we recall here. Let $M = (x_{i,j})$ be a $k\times (k+1)$ generic matrix and let $\mathrm{perm}_j(M)$ be the permanent of the matrix obtained from $M$ by removing the $j$th column. 
Define 
\[
M_{\ell,i,j} = \frac{\partial}{\partial x_{\ell,i}}\mathrm{perm}_j(M).
\]
Then $M_{\ell,i,j}$ is the $(k-1)\times (k-1)$ permanent of the matrix obtained from $M$ by omitting row $\ell$ and columns $i$ and $j$ (when $i=j$ the value is zero). We define two types of matrices: 
\[
A_{j} = \begin{pmatrix}
M_{1,1,j} & \ldots & M_{1,k+1,j} \\
\vdots & \vdots & \vdots \\
M_{k,1,j} & \ldots & M_{k,k+1,j} \\
\end{pmatrix}
\]
and 
\[
B_{\ell} = \begin{pmatrix}
M_{\ell,1,1} & \ldots & M_{\ell,1,k+1} \\
\vdots & \vdots & \vdots \\
M_{\ell,k+1,1} & \ldots & M_{\ell,k+1,k+1} \\
\end{pmatrix}. 
\]
The matrix $A_j$ is $k\times (k+1)$ and its $j$th column is zero; call $C_j$ the $k\times k$ submatrix obtained from $A_j$ by omitting its $j$th column of zeros. The matrix $B_{\ell}$ is symmetric of format $(k+1)\times (k+1)$. 

The next result is \cite[Proposition 9]{K}; we include a proof for completeness. 

\begin{proposition}[Kirkup]
The determinants $f_{j} = \det(C_j)$ and $g_{\ell} = \det(B_{\ell})$ are in the nondegenerate permanental ideal $J_k$. 
\begin{proof}
Thanks to the action of the symmetric group, 
it is enough to show the statement for $f_1$ and for $g_1$. For $j\neq 1$, by the Laplace expansion we have $\mathrm{perm}_j = \sum_{i=1}^k x_{i,1}M_{i,1,j}$. Let $e_j$ be the determinant of the $(k-1)\times (k-1)$ submatrix of $C_1$ obtained by omitting the first column and the $j$th row. Then
\[
I(P_{k,k+1})\ni \sum_{j=1}^k (-1)^j e_j\cdot \mathrm{perm}_j(M) = \sum_{i=1}^k\sum_{j=1}^k(-1)^j M_{i,1,j}\cdot e_j. 
\]
For $i=1$, the interior sum in the right-most side is $\det(C_1)=f_1$. For $i\neq 1$, the interior sum is the Laplace expansion of the determinant of the matrix obtained from $C_1$ replacing the first column with its $i$th column; so this sum is zero. Hence
$x_{1,1}\cdot f_1\in I(P_{k,k+1})$. Thus, by definition, $f_1\in J_k$. The proof for the $g_{\ell}$'s is similar, using the relation $\mathrm{perm}_j(M) = \sum_{i=1}^k x_{1,i} M_{1,i,j}$. 
\end{proof}
\end{proposition}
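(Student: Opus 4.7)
The plan is to prove $f_j, g_\ell \in J_k$ by producing, for each, a single variable $x$ such that $x \cdot f_j \in I(P_{k,k+1})$ (respectively $x \cdot g_\ell \in I(P_{k,k+1})$). By definition of the saturation $J_k = I(P_{k,k+1}):(\prod x_{i,j})^\infty$, this suffices. By the $\mathfrak{S}_k \times \mathfrak{S}_{k+1}$-action permuting rows and columns of $M$, it is enough to handle $j = 1$ and $\ell = 1$.

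The idea for $f_1$ is to combine three classical facts: the Laplace-type expansion of a permanent, the usual Laplace expansion of a determinant, and the vanishing of a determinant with two equal rows. First, for each $j \neq 1$, expanding the permanent along column $1$ gives $\mathrm{perm}_j(M) = \sum_{\ell=1}^k x_{\ell,1} M_{\ell,1,j}$. Second, the minors $M_{\ell, i, j}$ are symmetric in the pair of column indices $(i, j)$ since the two columns removed form an unordered pair; this lets one view row $1$ of $C_1$, whose entries are $M_{1, i, 1}$, as $M_{1, 1, i}$. I would then form the combination $\sum_{j \neq 1} \pm e_j \cdot \mathrm{perm}_j(M) \in I(P_{k,k+1})$, where $e_j$ is the appropriate signed $(k-1) \times (k-1)$ minor of $C_1$, and swap summation to obtain $\sum_{\ell} x_{\ell,1} \bigl( \sum_j \pm e_j M_{\ell,1,j} \bigr)$. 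The inner sum is a Laplace expansion of a $k \times k$ determinant: for $\ell = 1$ it recovers $f_1$, while for $\ell \neq 1$ it is the expansion of a determinant with two identical rows, hence zero. This places $x_{1,1} \cdot f_1$ in $I(P_{k,k+1})$.

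The argument for $g_1 = \det(B_1)$ is parallel, but one uses the \emph{row}-Laplace expansion $\mathrm{perm}_j(M) = \sum_i x_{1,i} M_{1,i,j}$ (with the convention $M_{1,j,j}=0$) together with the intrinsic symmetry of $B_1$. Weighting by suitable cofactors of $B_1$ obtained by deleting its first row and $j$-th column, the analogous sum-swap plus wrong-cofactor vanishing yields $x_{1,1} \cdot g_1 \in I(P_{k,k+1})$. The main (and essentially only) obstacle is the careful bookkeeping needed to match indices and signs when pairing a non-alternating expansion (for the permanent) with an alternating one (for the determinant), and to verify that the symmetry $M_{\ell, i, j} = M_{\ell, j, i}$ is precisely what makes the wrong-cofactor identity applicable to an off-diagonal row $\ell \neq 1$. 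Once the indexing is pinned down, everything follows from routine symbolic manipulation.
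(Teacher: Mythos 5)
Your proposal takes essentially the same route as the paper's proof: reduce to $f_1$ and $g_1$ by the group action, expand $\mathrm{perm}_j$ along the first column (resp.\ row), weight the resulting relations by the appropriate cofactors $e_j$ of $C_1$ (resp.\ of $B_1$), swap the order of summation, and recognize the inner sums as Laplace expansions of $\det(C_1)$ when the outer index is $1$ and of a determinant with a repeated row or column otherwise, giving $x_{1,1}f_1\in I(P_{k,k+1})$ and hence $f_1\in J_k$. The only cosmetic difference is that you make the symmetry $M_{\ell,i,j}=M_{\ell,j,i}$ explicit where the paper leaves it implicit in the indexing.
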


\subsection{Permanents of $k\times n$ matrices of linear forms}

\begin{theorem}\label{codims}
Let $F$ be a field of characteristic zero and $M$ a generic $k\times n$ matrix of linear forms, with $n\geq k+1$. 
The codimension of the variety $P_{k,n} = \lbrace \mathrm{prk}(M)\leq k-1\rbrace \subset F^{k\times n}$ is $n$ for $2\leq k\leq 5$. 
\begin{proof}
We perform induction on the number of columns $n$ and on the number of rows $k$. The base cases $2\times 3$, $3\times 4$, $4\times 5$, $5\times 6$ are proven in Theorem \ref{theo: 2xn}, Propositions \ref{cases:3x4 and 4x5} and \ref{5x6 case}. 

Let $n\geq k+2$ and suppose that the statement is proven for $n-1$ and any number of rows $\leq k-1$. Let $C$ be an irreducible and reduced component of $P_{k,n}$. We have two cases: either $C$ is a cone over an irreducible component of $P_{k-1,n}(M')$ where $M'$ is the $(k-1)\times n$ submatrix of $M$ consisting of the first $(k-1)$ rows, or it is not. In the first case, $C$ has codimension $\geq n$, because the only irreducible components of $P_{k-1,n}(M')$ have codimension $\geq n$ by inductive hypothesis.  

In the second case, up to permuting columns, we may assume that in $C$ the Zariski principal open set $U_C = C\cap \left\lbrace f\neq 0  \right\rbrace$, where $f=\mathrm{perm}(M_{[1,\ldots, k-1],[1,\ldots,k-1]})$, is nonempty. Let $B = M_{[1,\ldots, k],[1\leq j\leq n-1]}$ be the $k\times (n-1)$ matrix consisting of the first $n-1$ columns of $M$. We denote the linear forms in $B$ by $x_{ij}$. Let $z$ be the linear form in the $(k,n)$-th entry of $M$. For $1\leq i\leq k-1$ and $j=n$, let $y_{ij}$ be the linear form in the $(i,j)$-th entry of $M$. 

Define $R = F[x_{ij}, y_{ij}, z]$. Let $I\subset R$ be the ideal generated by the $k\times k$ permanents in $B$, and let $J\subset R$ be the ideal generated by the $k\times k$ permanent $\mathrm{perm}(M_{[1,\ldots,k], [1,\ldots,k-1,n]})$. The latter may be expressed as
\[
\mathrm{perm}(M_{[1,\ldots,k], [1,\ldots, k-1, n]}) = f\cdot z + g(x_{ij}, y_{ij}),
\]
where $g(x_{ij}, y_{ij})$ is some polynomial in the variables $x_{ij}, y_{ij}$ only. Note that $I+J\subset I(P_{k,n})\subset I(C)$, where the latter is the prime ideal of $C$. Let $Y$ be the affine variety defined by the ideal $I+J$. 
Therefore $\emptyset \neq U_C = C\cap \left\lbrace f\neq 0  \right\rbrace\subset Y\cap \left\lbrace f\neq 0  \right\rbrace = U_Y$. The inclusion implies $\dim(U_C)\leq \dim(U_Y)$. 
The coordinate ring of the principal Zariski open set $U_Y$ is the localization of the coordinate ring of $Y$ at the element $f\in F[x_{ij}]$, i.e. $F[U_Y] = R[f^{-1}]/(I'+J')$, where $I'$ and $J'$ are the ideals $I$ and $J$ defined above after 
localizing at $f$. 

Let $S = F[x_{ij},y_{ij}]$. Note that the rings $R[f^{-1}]$ and $S[f^{-1}]$ are domains and $J' = (z+h)$, for $h\in S[f^{-1}]$. 

We show that $F[U_Y]$ is isomorphic to the ring $S[f^{-1}]/\widetilde{I}$, where $\widetilde{I}$ is the ideal in $S[f^{-1}]$ generated by the $k\times k$ permanents of $B$. An element of $F[U_Y]$ is an equivalence class $\overline{g} = g+(z+h)g_1 + \sum_{i=1}^{\ell}p_i q_i$, where the $p_i$'s are the generators of $I$ 
and $q_i\in R[f^{-1}]$. Since the latter ring is a domain, we may perform
Euclidean division of $g$ and of each $q_i$ by the element $z+h$. So each equivalence class is of the form $\overline{g} = g + (z+h)g_2 + \sum_{i=1}^{\ell}p_i r_i$, where $\deg_{z}(g)=0$ and $\deg_{z}(r_i) = 0$ for each $i$. This condition means that the $g, r_i\in S[f^{-1}]$ for each $i$. Define the map 
\[
\phi: F[U_Y]\longrightarrow S[f^{-1}]/\widetilde{I}
\]
where $\phi(\overline{g}) = g + \sum_{i=1}^{\ell}p_i r_i$. This is a ring isomorphism. 

Now, regard the ideal $I$ above as an ideal in $S$. By induction, $\mathrm{ht}(I) = n-1$. 
As height can only go up after localisation with respect to an element not in the ideal, one has $\mathrm{ht}(\widetilde{I})\geq n-1$.

Since $S[f^{-1}]$ is a finitely generated domain, one has the equality 
\[
\dim(F[U_Y]) = \dim(S[f^{-1}]/\widetilde{I}) =
\]
\[
=\dim(S[f^{-1}])-\mathrm{ht}(\widetilde{I})\leq (kn-1)-(n-1) = (k-1)n. 
\]
Thus $\dim(C) = \dim(U_C)\leq \dim(U_Y) \leq (k-1)n$ and hence
$\mathrm{codim}(C)\geq n$.\end{proof} 
\end{theorem}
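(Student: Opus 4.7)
The upper bound $\mathrm{codim}(P_{k,n})\leq n$ is already given by Proposition \ref{lower-upperboundscodim}, so the task is to establish the matching lower bound. The plan is a double induction: an outer induction on $k$ (for $2\leq k\leq 5$) with base cases $n=k+1$ supplied by Theorem \ref{theo: 2xn}, Proposition \ref{cases:3x4 and 4x5}, and Proposition \ref{5x6 case}, and an inner induction on $n\geq k+1$. The inductive step amounts to showing that any irreducible component $C$ of $P_{k,n}$ satisfies $\dim C \leq (k-1)n$.

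To bound $\dim C$, the natural dichotomy is whether every $(k-1)\times(k-1)$ permanent of the first $k-1$ rows of $M$ vanishes on $C$. In the affirmative case, $C$ is a cone (in the variables of the last row) over an irreducible component of the smaller permanental variety $P_{k-1,n}$ associated to those first $k-1$ rows; the outer inductive hypothesis then immediately yields $\mathrm{codim}(C)\geq n$.

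The interesting case is the opposite one. After permuting rows and columns, I may assume that $f=\mathrm{perm}(M_{[1,\ldots,k-1],[1,\ldots,k-1]})$ is nonzero on an open subset $U_C\subset C$. The key observation is that the $k\times k$ permanent of the submatrix $M_{[1,\ldots,k],[1,\ldots,k-1,n]}$ decomposes as $f\cdot z + g$, where $z=x_{k,n}$ and $g$ is independent of $z$; since this permanent vanishes on $C$, the variable $z$ is rationally expressed by the others on $U_C$. Letting $B$ denote the $k\times(n-1)$ submatrix formed by the first $n-1$ columns, I would introduce the auxiliary variety $Y$ cut out by the $k\times k$ permanents of $B$ together with the relation $f z + g = 0$, and observe that $U_C\subset U_Y := Y\cap\{f\neq 0\}$. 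Using Euclidean division in $z$ by the linear-in-$z$ polynomial $fz+g$, one identifies the coordinate ring $F[U_Y]$ with $S[f^{-1}]/\widetilde{I}$, where $S$ is the polynomial ring in all variables except $z$ and $\widetilde{I}$ is the localized ideal of $k\times k$ permanents of $B$.

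The final dimension estimate then follows from the inner inductive hypothesis: the ideal of $k\times k$ permanents of a generic $k\times(n-1)$ matrix has height $n-1$, a bound preserved after adjoining the free variables $y_{1,n},\ldots,y_{k-1,n}$ and localizing at $f$, since localization cannot decrease height. Hence $\mathrm{ht}(\widetilde{I})\geq n-1$, giving $\dim F[U_Y]\leq (kn-1)-(n-1)=(k-1)n$, and consequently $\dim C\leq (k-1)n$, closing the induction. The main obstacle is the ring-theoretic identification $F[U_Y]\cong S[f^{-1}]/\widetilde{I}$: one must argue carefully that the Euclidean-division step interacts well with the remaining ideal generators, so that the elimination of $z$ is genuine at the level of equivalence classes.
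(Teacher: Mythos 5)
Your proposal is correct and follows essentially the same strategy as the paper's proof: double induction with the base cases $n=k+1$ handled separately, the dichotomy according to whether all $(k-1)\times(k-1)$ permanents of the first $k-1$ rows vanish on $C$, and in the nondegenerate case the elimination of $z=x_{k,n}$ via Euclidean division by $fz+g$ after localizing at $f$, leading to the identification $F[U_Y]\cong S[f^{-1}]/\widetilde{I}$ and the height estimate $\mathrm{ht}(\widetilde I)\geq n-1$ from the inner inductive hypothesis. The caveat you flag about making the ring-theoretic identification precise is exactly the point the paper also argues explicitly, so your outline matches both the architecture and the key technical step of the published argument.
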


In this last result we explicitly employed the knowledge of the codimension of $P_{k,k+1}\subset F^{k\times (k+1)}$ for $2\leq k\leq 5$. Notice that the core of the proof of Theorem \ref{codims} is the following. 

\begin{theorem}\label{thm:codimP_kn}
Let $k\geq 1$. 
If $P_{h,h+1}\subset F^{h\times (h+1)}$ has codimension $h+1$ for any $h\leq k$, then $P_{k,n}$
has codimension $n$, for any $n\geq k+1$. In particular, the validity of Conjecture \ref{conj: ci} for every $k\in \NN$ implies 
that $P_{k,n}$ has codimension $n$, for every $k\in \NN$ and $n\geq k+1$.
\end{theorem}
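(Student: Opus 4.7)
The plan is to adapt the argument of Theorem \ref{codims}, observing that its proof is already structured as a double induction on the number of rows $h$ and the number of columns $n$, and that the restriction $2 \le k \le 5$ there was needed only to provide the base cases $P_{h, h+1}$ of codimension $h+1$. Since the hypothesis of Theorem \ref{thm:codimP_kn} supplies exactly these base cases for every $h \le k$, the inductive skeleton of Theorem \ref{codims} can be reused verbatim; the final assertion about Conjecture \ref{conj: ci} will then follow by taking $k$ arbitrary.

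More precisely, I would perform an outer induction on $h = 1, \dots, k$ showing that $P_{h, n}$ has codimension $n$ for every $n \ge h+1$. The case $h = 1$ is immediate: the ideal is generated by the $n$ entries of the single row of $M$, so $P_{1, n}$ has codimension $n$. For the outer inductive step, fix $h \ge 2$ and assume the statement for all smaller row counts; run an inner induction on $n \ge h+1$, the inner base case $n = h+1$ being exactly the hypothesis of the theorem.

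For the inner inductive step at $(h, n)$ with $n \ge h+2$, pick any irreducible component $C$ of $P_{h, n}$ and dichotomize as in the proof of Theorem \ref{codims}. If $C$ is a cone over an irreducible component of $P_{h-1, n}$ applied to some $(h-1) \times n$ submatrix of $M$, the outer inductive hypothesis yields $\mathrm{codim}(C) \ge n$. Otherwise some $(h-1) \times (h-1)$ permanent $f$ is nonvanishing on an open subset $U_C \subset C$, and the localization plus coordinate-ring isomorphism $F[U_Y] \cong S[f^{-1}] / \widetilde{I}$ constructed in Theorem \ref{codims} give $\dim(U_C) \le (h-1)n$ provided $\mathrm{ht}(\widetilde{I}) \ge n-1$; this last inequality is exactly the inner inductive hypothesis $\mathrm{codim}(P_{h, n-1}) = n-1$, since height can only grow under localization at an element outside the ideal. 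Combined with the trivial upper bound $\mathrm{codim}(P_{h,n}) \le n$ from Proposition \ref{lower-upperboundscodim}, this gives $\mathrm{codim}(P_{h, n}) = n$.

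The main obstacle is really not one at all: the content of Theorem \ref{thm:codimP_kn} is the observation that the argument of Theorem \ref{codims} is modular, in the sense that its inductive step is agnostic to the specific value of $k$ and only consumes the codimension of $P_{h', n'}$ at strictly smaller parameters. The only nontrivial ingredient one must verify carefully is the localization bookkeeping (that $\widetilde{I}$ is still generated by the $h \times h$ permanents of the first $n-1$ columns after passing to $S[f^{-1}]$), which is immediate from the ring isomorphism already established in Theorem \ref{codims}.
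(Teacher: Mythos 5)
Your proposal is correct and is essentially the paper's own (implicit) proof: the paper gives no separate argument for Theorem \ref{thm:codimP_kn}, stating only that it is ``the core of the proof of Theorem \ref{codims},'' and you have accurately extracted that core, made the double induction on $(h,n)$ explicit, supplied the trivial base case $h=1$, and verified that the only non-reusable ingredient in Theorem \ref{codims} was the four explicit base cases, which the hypothesis here replaces.
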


\section{Torus actions}\label{sec:torus action}

In this section, we work over the field of complex numbers. Let $T=\CC^{*}$ act on a vector space $V$ with weights $0$ and $1$. This means that $V=V_0\oplus V_1$ and for every $v\in V_i$ and $t\in T$, we have $t\cdot v = t^{i}v$. The action induces naturally an action on $\PP(V)$. 

Let $Y\subset V$ be a $T$-invariant 
variety. Then, since the torus is irreducible, any irreducible component $X$ is invariant under 
the action of $T$. Let $X^{T}$ be the locus of fixed points under the $T$-action. It is not difficult to check that $X^T$ is smooth if $X$ is so. Then there exists a morphism 
\[
\phi_{t\rightarrow 0}: X \longrightarrow X^{T},
\]
defined by $\phi_{t\rightarrow 0}(x) = \lim_{t\rightarrow 0} t\cdot x\in X^{T}$. This is the restriction of the projection $V\rightarrow V_0$ with kernel $V_1$. Suppose $X$ is an affine cone
over $X'\subset \PP(V)$. Then $\phi_{t\rightarrow 0}$ induces a map 
\[
\psi_{t\rightarrow 0}: U \longrightarrow X'^{T},
\]
where $U\subset X'$ is the set of all $[x]\in X'$ such that $\lim_{t\rightarrow 0} t \cdot [x] \neq 0$. 

For any $x\in X^T$, the tangent space $T_{Y,x}$ to $Y\supset X$ splits into two summands of weight $0$ and $1$, 
which we call $T^0_{Y,x}$ and $T^1_{Y,x}$, respectively. 
\begin{definition}
    Let $Y\subset V$ be a variety, where $T=\CC^*$ acts on $V$ as above. Let $Z$ be an irreducible subvariety of $Y$. There exists a nonempty, Zariski open subset $U\subset Z$, such that the restriction of the tangent sheaf of $Y$ to $U$ is a vector bundle $T_{Y|U}$. If $T$ acts on $U$ then we obtain $T_{Y|U}=T^1_{Y|U}\oplus T^0_{Y|U}$. The total spaces of all three bundles map naturally to $V$ and we identify $T^1_{Y|U}$ with its image.
    
    We define $T^1_{Z,Y}$ as the Zariski closure in $V$ of $T^1_{Y|U}$. This irreducible variety does not depend on the choice of $U$. 
\end{definition}

\begin{proposition}\label{bb in our case}
Any irreducible component $X$ of a variety $Y\subset V$ as above is contained in $T^1_{X^T,Y}$.
\begin{proof}
Note that $X^T$ is irreducible, as it is the image $\phi_{t\rightarrow 0}(X)$. If $X=X^T$, 
then the statement is true as the total space $T^1_{X^T,Y}$ is canonically identified with $X^T$, as each fiber is zero. If $X^{T}\neq X$, the general point $x\in X$ will map to a point $\phi_{t\rightarrow 0}(x)\in X^T$ belonging to an open set $U\subset X^T$ over which $T_Y$ is a vector bundle. We have to prove that $x\in T^1_{Y,\phi_{t\rightarrow 0}(x)}$. 
Indeed, the whole orbit $T\cdot x$ consists of vectors in the fiber $T^1_{Y,\phi_{t\rightarrow 0}(x)}$, as the closure of this orbit is a line contained in $Y$, passing through $\phi_{t\rightarrow 0}(x)$ and on which $T$ acts with weight one. This proves the desired inclusion. 
\end{proof}
\end{proposition}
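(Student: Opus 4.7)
The plan is to show, for a general $x$ in an irreducible component $X$, that the orbit closure $\overline{T\cdot x}$ is an affine line contained in $Y$ whose direction is a weight-one tangent vector to $Y$ at the limit point $\phi_{t\rightarrow 0}(x)\in X^T$. This will exhibit $x$ as a point in the image of $T^1_{Y|U}\rightarrow V$, and Zariski-closing over the dense locus where this works will give $X\subset T^1_{X^T,Y}$.

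First I would check that $X$ is $T$-invariant: the connected torus $T$ permutes the irreducible components of $Y$, so every component is fixed setwise, and consequently $X^T=\phi_{t\rightarrow 0}(X)$ is the image of an irreducible variety under a morphism, hence irreducible. I would then split into two cases. If $X=X^T$, then $X$ is contained in the zero section of the bundle $T^1_{X^T,Y}$ and the claim is immediate. If $X\neq X^T$, the morphism $\phi_{t\rightarrow 0}|_X:X\rightarrow X^T$ is dominant, so the preimage of the open dense subset $U\subset X^T$ over which $T_Y$ restricts to a vector bundle is open and dense in $X$.

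Next I would pick a general $x$ in that preimage, write $x=x_0+x_1$ with $x_i\in V_i$ according to the $T$-weight decomposition of $V$, and note that $\phi_{t\rightarrow 0}(x)=x_0\in U$. The orbit is $T\cdot x=\{x_0+sx_1:s\in\CC^*\}$, and its closure is the affine line through $x_0$ in direction $x_1$. Since $X$ is closed and $T$-invariant, this entire line lies in $X\subset Y$, so $x_1\in T_{Y,x_0}$. Because $T$ acts on $x_1$ with weight one, in fact $x_1\in T^1_{Y,x_0}$, and the identification of $T^1_{Y|U}$ with its image in $V$ sends $(x_0,x_1)$ to $x_0+x_1=x$. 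Letting $x$ vary and taking Zariski closures yields $X\subset T^1_{X^T,Y}$.

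The main obstacle, and the only place that really requires care, is the weight-one identification: the tangent direction $x_1$ produced from the orbit must be recognized as an element of the weight-one summand $T^1_{Y,x_0}$ rather than merely of $T_{Y,x_0}$. This uses both that the whole line $x_0+sx_1$ lies in $Y$ (so that $x_1$ is a tangent vector at all) and the $T$-equivariance of its parametrisation, which pins down its weight with respect to the direct sum $T_{Y|U}=T^1_{Y|U}\oplus T^0_{Y|U}$.
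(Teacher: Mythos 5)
Your proof is correct and follows essentially the same route as the paper's: case split on $X=X^T$, pass to a general point $x$ in the preimage of the open locus $U$ where $T_Y$ is a bundle, observe that the orbit closure is an affine line in $Y$ through $x_0=\phi_{t\to 0}(x)$ with weight-one direction $x_1\in V_1$, and conclude $x_1\in T^1_{Y,x_0}$, hence $x=x_0+x_1$ lies in the image of $T^1_{Y|U}$ in $V$. You spell out a few details the paper compresses---$T$-invariance of $X$, the weight decomposition $x=x_0+x_1$, and the explicit identification of the total space with its image in $V$---but the argument is the same.
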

Proposition \ref{bb in our case} is a simpler version of the Bia\l{}ynicki-Birula decomposition theorem \cite[Chapter II, Theorem 4.2]{Carrell}, but in a possibly singular context.

\begin{definition}[{\bf Type}]
The {\it type} of $X$ is the rank of the bundle  $T^1_{Y|U}$ in Proposition \ref{bb in our case}.  
\end{definition}

\subsection{Irreducible components of $P_{k,k+1}$ and torus actions}\label{subsec:correspondence1}

Let $W$ be the linear component of $Y=P_{k,k+1}$ given by the vanishing of the first row. If $p\in W$, 
then the Jacobian $J(Y)_p$ of $Y$ at $p$ is a $(k(k+1))\times (k+1)$-matrix with two blocks: 
\[
J(Y)_p = \begin{pmatrix}
B_{1}(A_p) \\
{\bf 0} \\
\end{pmatrix},
\]
where $A_p$ is the $(k-1)\times (k+1)$ nonzero submatrix of $p$ and 
$B_{1}(A_p)$ is the matrix introduced in \S \ref{subsec:kirkup ideal} evaluated at the entries of $A_p$. The matrix $B_{1}(A_p)$ is a symmetric $(k+1)\times (k+1)$-matrix whose main diagonal consists of zeros. The zero-block ${\bf 0}$ has size $((k-1)(k+1))\times (k+1)$.  

Let $T=\CC^{*}$ act on $V=\CC^{k\times (k+1)}$ scaling by $t$ the first row of a matrix in $V$ and preserving the other entries. 

\begin{corollary}\label{cor:tang at P kx(k+1)}
For any irreducible component $X$ of $Y=P_{k,k+1}\subset V$, we have a map 
\[
\phi_{t\rightarrow 0}: X \longrightarrow X^T \subset W=V^T. 
\]
If $X\neq W$, then any point $p\in X^T$ is in the singular locus of $Y$. Let $A_p$ be the corresponding $(k-1)\times (k+1)$ nonzero submatrix of $p$. 
For $p\in \phi_{t\rightarrow 0}(X)$, its tangent space to $Y$ is 
\[
T_{Y,p} = V^T\oplus \ker(B_1(A_p)), 
\]
where $T^0_{Y,p}=V^T$ and $T^1_{Y,p}=\ker(B_1(A_p))$. In particular, $\dim_{\CC} T^1_{Y,p} = \dim_{\CC} \ker(B_1(A_p))$. 
\begin{proof}
The point $p$ has to be singular as it belongs to two components: $X$ and $W$. 
The tangent space $T_{Y,p}$ is the kernel of transpose of $J(Y)_p$. The kernel of the matrix $B_1(A_p)$ sits inside 
the span of the variables $x_{1,h}$ for $1\leq h\leq k+1$, corresponding to the first row, which is 
a complement to the subspace $V^T$ in $V$. 
\end{proof}
\end{corollary}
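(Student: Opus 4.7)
The plan is to assemble three pieces already in the paper: the identification of the torus fixed locus, the general contraction map $\phi_{t\rightarrow 0}$ of \S\ref{sec:torus action}, and the block structure of $J(Y)_p$ recalled at the start of \S\ref{subsec:correspondence1}. The $T$-action scales only the first-row entries, so $V^T=W$ as linear subspaces of $V$; since $X$ is $T$-invariant, the section's general machinery gives $\phi_{t\rightarrow 0}\colon X\rightarrow X^T$ with $X^T\subset V^T=W$. When $X\neq W$, any $p\in X^T\subset W$ lies in both $X$ and $W$, and since Proposition \ref{linearspaces} identifies $W$ as an irreducible component of $Y$, such a $p$ sits on two distinct components and is therefore in $\mathrm{Sing}(Y)$; removing its zero first row yields the submatrix $A_p$.

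For the tangent-space formula I would read off $J(Y)_p=\begin{pmatrix}B_1(A_p)\\ \mathbf 0\end{pmatrix}$ directly. Decomposing a tangent vector $v\in V$ as $(v_1,v_2)$ with $v_1$ the first-row part and $v_2$ the rest, the tangent condition $J(Y)_p^{T}v=0$ combined with the symmetry of $B_1(A_p)$ reduces to $B_1(A_p)v_1=0$ with $v_2$ unconstrained. This yields $T_{Y,p}=\ker B_1(A_p)\oplus V^T$. The weight decomposition then matches this splitting exactly: first-row entries are weight one under $T$ and all other entries are weight zero, so $T^1_{Y,p}=\ker B_1(A_p)$ and $T^0_{Y,p}=V^T$, and the dimension statement is immediate.

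There is no serious obstacle here: the corollary is essentially bookkeeping once one has the block form of $J(Y)_p$ and the identification $V^T=W$. The point most worth isolating---though it is still easy---is that the full $V^T$ really sits inside $T_{Y,p}$ as the weight-zero summand; this is immediate from the inclusion $W\subset Y$ of a linear subspace through $p$, which guarantees $T_{W,p}=V^T\subset T_{Y,p}$.
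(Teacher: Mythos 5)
Your proof is correct and takes essentially the same approach as the paper's: both identify $V^T=W$ via the torus action, observe that $p\in X^T\subset X\cap W$ lies on two components and is therefore singular, read off the block form $J(Y)_p=\begin{pmatrix}B_1(A_p)\\ \mathbf 0\end{pmatrix}$, and compute $T_{Y,p}=\ker J(Y)_p^T=\ker B_1(A_p)\oplus V^T$ with the two summands matching the weight-one and weight-zero parts of the torus action. Your explicit remark that $W\subset Y$ forces $V^T=T_{W,p}\subset T_{Y,p}$ is a minor clarification, not a departure.
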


Let $\mathcal X_i = \lbrace p\in V^T \ | \ \mathrm{crk}(B_{1}(A_p)) = i \rbrace \subset V^T$ for $0\leq i\leq k+1$, where $\mathrm{crk}$ denotes the corank of $B_{1}(A_p)$, regarded as a matrix in $\CC^{(k+1)\times (k+1)}$. This is a constructible set. 

\begin{theorem}\label{prop:irred comps of k x k+1}
Let $X$ be any irreducible component of $Y=P_{k,k+1}$. 
Then $X$ coincides with $T^1_{X^T,Y}$. Note that there is a unique $i$ such 
that the generic point of $X^T$ sits inside $\mathcal X_i$, i.e. $X$ is of type $i$. 
\begin{proof}
By Proposition \ref{bb in our case}, $X$ is a subvariety of $T^1_{X^T,Y}$. 
There is a unique such index $i$, by the irreducibility of $X^T$ and by semi-continuity of matrix rank. 

We have to show the opposite inclusion. By the irreducibility of $T^1_{X^T,Y}$ and since $X$ is closed, it is 
enough to show that $T^1_{Y|X^T\cap \mathcal X_i}\subset X$. Let $(q,p)\in T^1_{Y|X^T}$ for $p\in X^T\cap \mathcal X_i$. Hence 
$q\in \ker(B_1(A_p))$. Now, regard $q = (q_1,\ldots, q_{k+1})\in \CC^{k+1}$. 
By definition, one has
\[
B_1(A_p)\cdot 
\begin{pmatrix}
q_1 \\
\vdots \\
q_{k+1}\\
\end{pmatrix} = \begin{pmatrix}
0 \\
\vdots \\
0\\
\end{pmatrix}. 
\]
The $j$-th linear condition $B_1(A_p)_j \cdot q^{t} = 0$ is equivalent to the vanishing of the $k\times k$ permanent 
of the submatrix of $M$ obtained by removing the $j$-th column of $M$ and evaluating at $(q,p)$. Hence $(q,p)\in Y$ and thus $(tq,p)\in Y$ for every $t\in T$. Thus $T^1_{Y|X^T\cap \mathcal X_i}$ is irreducible, contained in $Y$ and intersecting $X$ in a Zariski dense set. As $X$ is a component of $Y$ it follows that $X=T^1_{X^T,Y}$.
\end{proof}
\end{theorem}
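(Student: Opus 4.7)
The plan is to establish the reverse inclusion $T^1_{X^T,Y}\subset X$, since Proposition \ref{bb in our case} already supplies $X\subset T^1_{X^T,Y}$. First I would note that $X^T$ is irreducible, being the image $\phi_{t\to 0}(X)$ of an irreducible variety. By upper semi-continuity of the corank of $B_1(A_{\bullet})$ along $V^T$, there is then a unique index $i$ such that $X^T\cap\mathcal{X}_i$ is open and dense in $X^T$; this produces the type of $X$, with uniqueness for free.

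Next I would restrict attention to the open stratum $U=X^T\cap\mathcal{X}_i\subset X^T$. By Corollary \ref{cor:tang at P kx(k+1)}, the weight-one part of the tangent sheaf of $Y$ over $U$ is a genuine vector bundle $T^1_{Y|U}$ whose fiber at $p\in U$ is $\ker(B_1(A_p))$, of constant dimension equal to the corank. Consequently the total space $T^1_{Y|U}$ is irreducible, and so is its Zariski closure $T^1_{X^T,Y}$ inside $V$.

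The crux of the argument is the identification, inside $V$, of this total space with a subvariety of $Y$. I would realize a point of $T^1_{Y|U}$ as a pair $(p,q)$ with $p\in U$ and $q\in\ker(B_1(A_p))$, then assemble the $k\times(k+1)$ matrix whose first row is $q$ and whose remaining $k-1$ rows form $A_p$. A Laplace expansion along the first row expresses the $j$-th column-removed permanent of this matrix as $\sum_i q_i M_{1,i,j}(A_p)$, which is precisely the $j$-th entry of $B_1(A_p)\cdot q^t$. Hence the condition $B_1(A_p)q^t=0$ is equivalent to the vanishing of all $k\times k$ permanents, placing the assembled matrix in $Y$. I expect this bookkeeping step, matching the linear-algebra kernel with the defining equations of $Y$ under the identification of the fiber with the first-row coordinates, to be the main, if essentially routine, obstacle.

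Finally, combining the previous steps, $T^1_{X^T,Y}$ is an irreducible subvariety of $Y$ that contains the component $X$. Maximality of irreducible components then forces $T^1_{X^T,Y}=X$, completing the proof.
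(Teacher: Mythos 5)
Your proposal is correct and follows essentially the same route as the paper: establish the reverse inclusion by showing the total space over the open stratum $X^T\cap\mathcal X_i$ lies in $Y$, via the identification of $B_1(A_p)\cdot q^t=0$ with the vanishing of the $k\times k$ permanents (which the paper states directly and you justify by Laplace expansion), and then conclude $T^1_{X^T,Y}=X$ by maximality of the component $X$. The only cosmetic difference is that you invoke Corollary \ref{cor:tang at P kx(k+1)} and the Laplace expansion explicitly, whereas the paper treats the fiber identification as immediate.
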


\begin{corollary}\label{cor: codim for k x k+1}
Let $X$ be any irreducible component of $Y$. Let 
$p\in X^T$ be general. Then 
\[
\dim X = \dim X^T + \dim_{\CC} \ker B_1(A_p). 
\]
Equivalently, one has 
\[
\mathrm{codim}\ X = \mathrm{codim}_{V^T} X^T + \mathrm{rk}(B_1(A_p)). 
\]
\end{corollary}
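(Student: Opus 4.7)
The plan is to read off the dimension of $X$ directly from its identification with $T^1_{X^T,Y}$, which is available from Theorem \ref{prop:irred comps of k x k+1}, and then compute the codimension by subtraction.

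First, I would invoke Theorem \ref{prop:irred comps of k x k+1} to replace $X$ with $T^1_{X^T,Y}$, the Zariski closure of the total space of the weight-one part of the tangent bundle $T_{Y|U}$, where $U \subset X^T$ is the Zariski open subset on which $T_{Y|U}$ is locally free. Over this $U$, $T^1_{Y|U}$ is a vector bundle of some fixed rank $r$, and the total space of this bundle has dimension $\dim U + r = \dim X^T + r$; taking Zariski closure does not change the dimension, giving $\dim X = \dim X^T + r$.

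Next, I would identify $r$ with $\dim_{\CC} \ker B_1(A_p)$ for a general $p \in X^T$. By Corollary \ref{cor:tang at P kx(k+1)}, for every $p \in X^T$ the weight-one part of the tangent space to $Y$ at $p$ is $\ker(B_1(A_p))$; in particular, the rank of the bundle at a point of $U$ is the dimension of this kernel. On the constructible stratification of $V^T$ by the sets $\mathcal{X}_i$, there is a unique $i$ whose intersection with $X^T$ is Zariski dense (see the statement of Theorem \ref{prop:irred comps of k x k+1}), and by semicontinuity of matrix rank the open subset $U$ may be chosen inside $X^T \cap \mathcal{X}_i$. Thus for a general $p \in X^T$ one has $r = \dim_{\CC} \ker B_1(A_p)$, establishing the first equality.

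Finally, I would deduce the codimension formula by subtraction. Since $V = \CC^{k\times (k+1)}$ has dimension $k(k+1)$ and $V^T$ (matrices with vanishing first row) has dimension $(k-1)(k+1)$, we get $\dim V - \dim V^T = k+1$. The matrix $B_1(A_p)$ is $(k+1)\times(k+1)$, so
\[
\dim_{\CC} \ker B_1(A_p) + \mathrm{rk}(B_1(A_p)) = k+1.
\]
Subtracting the first equality from $\dim V = \dim V$ and rearranging gives
\[
\mathrm{codim}\, X = (\dim V - \dim V^T) + (\dim V^T - \dim X^T) - \dim_{\CC} \ker B_1(A_p) = \mathrm{codim}_{V^T} X^T + \mathrm{rk}(B_1(A_p)),
\]
which is the claimed formula. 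There is no real obstacle here: the only subtlety worth spelling out is the appeal to semicontinuity to justify that the generic fiber dimension of $T^1_{Y|U}$ agrees with $\dim_{\CC} \ker B_1(A_p)$ at a general $p \in X^T$, and this is automatic once one restricts to the open stratum $\mathcal{X}_i$ selected by Theorem \ref{prop:irred comps of k x k+1}.
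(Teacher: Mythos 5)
Your argument is correct and is precisely the (unwritten) proof the paper intends: Theorem~\ref{prop:irred comps of k x k+1} identifies $X$ with the closure of the total space of the rank-$r$ bundle $T^1_{Y|U}$ over the open stratum $U\subset X^T\cap\mathcal X_i$, Corollary~\ref{cor:tang at P kx(k+1)} identifies the fiber with $\ker B_1(A_p)$, and the codimension form is then the elementary arithmetic using $\dim V-\dim V^T=k+1=\dim\ker B_1(A_p)+\operatorname{rk} B_1(A_p)$. Nothing to add.
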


\subsection{Correspondence between vector bundles and components of $Y=P_{k,k+1}$.}\label{subsec:correspondence2}
Theorem \ref{prop:irred comps of k x k+1} shows a geometric feature lurking behind the variety 
$Y$, that is a hierarchy of irreducible components: each irreducible component corresponds to a vector bundle of rank $\mathrm{crk}(B_1(A_p))$. 
The irreducible component $W=V^T\subset Y$ is of type $0$: $\mathrm{codim}_{V^T} \phi_{t\rightarrow 0}(W)=0$ and $B_1(A_p)$ is full-rank for a general $p\in V^T$. The irreducible components that are cones over irreducible components of $P_{k-1,k+1}$ correspond to $\mathrm{rk}(B_1(A_p))=0$, 
and hence they are of type $k+1$. 

\begin{proposition}\label{prop: no component corresponding to rank 1}
There is no irreducible component $X$ of $Y$ such that the general point of $X^T$ belongs to $ \mathcal X_{k}$, i.e. there is no irreducible component of type $k$. 
\begin{proof}
A necessary condition for the existence of such an irreducible component is 
that $\mathcal X_{k}\neq \emptyset$. This condition means that $\mathrm{rk}(B_1(A_p))=1$ 
for some $p\in V^T$. However, $B_1(A_p)$ is a symmetric matrix with zeros on the diagonal. Therefore, if it is nonzero,
then it has a $2\times 2$ submatrix $N$ of the form 
\[
N = \begin{pmatrix}
0 & a \\
a & 0 \\
\end{pmatrix},
\]
with $a\neq 0$. Thus $\mathrm{rk}(B_1(A_p))\geq 2$, whenever $B_1(A_p)$ is nonzero. 
\end{proof}
\end{proposition}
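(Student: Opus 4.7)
The plan is to reduce the claim to showing that the constructible set $\mathcal{X}_k$ is itself empty, which trivially prevents any irreducible component from having the general point of its $T$-fixed locus inside it. By definition, a point $p \in \mathcal{X}_k$ would correspond to $B_1(A_p) \in \CC^{(k+1)\times(k+1)}$ of corank $k$, i.e.\ of rank exactly one, so it suffices to rule out rank one.

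The argument will rest on two structural features of $B_1(A_p)$ already recorded in \S\ref{subsec:kirkup ideal}: it is a symmetric $(k+1)\times(k+1)$ matrix, and its diagonal entries $M_{1,i,i}$ all vanish (the convention $M_{\ell,i,j}=0$ when $i=j$, since the submatrix is obtained by omitting row $\ell$ and \emph{two} columns $i$ and $j$). I would then invoke the elementary linear-algebra fact that a symmetric matrix $B$ with zero diagonal over a field of characteristic $\ne 2$ is either zero or of rank at least two: indeed, if $B \ne 0$, pick any nonzero entry $a = B_{ij} = B_{ji}$ with $i \ne j$ (which must exist by symmetry together with the zero-diagonal condition), and observe that the principal $2 \times 2$ submatrix on rows and columns $i, j$ equals
\[
\begin{pmatrix} 0 & a \\ a & 0 \end{pmatrix},
\]
whose determinant is $-a^2 \ne 0$. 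This forces $\mathrm{rk}(B) \geq 2$.

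Combining these two observations, $B_1(A_p)$ can never have rank exactly one: its rank is either zero (which corresponds to the cone-type components coming from $P_{k-1,k+1}$, of type $k+1$) or at least two. Hence $\mathcal{X}_k = \emptyset$, and the desired nonexistence follows.

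Honestly, there is no real obstacle here: the statement is essentially a structural remark about $B_1(A_p)$. The only point requiring care is to correctly extract the vanishing of the diagonal from Kirkup's matrices, after which the standard rank-from-principal-submatrix trick closes the argument immediately.
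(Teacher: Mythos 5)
Your proof is correct and follows essentially the same route as the paper: both reduce to showing $\mathcal{X}_k = \emptyset$ and both use the observation that a nonzero symmetric matrix with zero diagonal (in characteristic $\ne 2$, which holds here over $\CC$) must contain an off-diagonal $2\times2$ principal block $\begin{pmatrix}0 & a\\ a & 0\end{pmatrix}$ with $a \neq 0$, forcing rank at least two. The only cosmetic difference is that you make the characteristic caveat and the "pick a nonzero off-diagonal entry" step explicit, which the paper leaves implicit.
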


\begin{proposition}
The irreducible components described in Remark \ref{CompCol}, i.e. those given by 
choosing a vanishing column and the vanishing complementary permanent, are of type $k-1$. 
\begin{proof}
Given such an irreducible component, it is immediate to see that 
$B_1(A_p)$ has nonzero only one row and one column and so $\mathrm{rk}(B_1(A_p)) = 2$. 
\end{proof}
\end{proposition}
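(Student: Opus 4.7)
The strategy is to apply Corollary \ref{cor: codim for k x k+1}, which reads
\[
\mathrm{codim}\, X = \mathrm{codim}_{V^T} X^T + \mathrm{rk}(B_1(A_p))
\]
for a general $p\in X^T$. Since the components in question have codimension $k+1$ by Remark \ref{CompCol}, it is enough to pin down $\mathrm{codim}_{V^T} X^T$ and to read off $\mathrm{rk}(B_1(A_p))=2$, giving corank $k-1$ and hence type $k-1$.

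First I would fix a column index $j$ and describe the corresponding component $X_j$ explicitly: it is cut out by the $k$ linear equations $x_{i,j}=0$ for $1\leq i\leq k$ together with the single $k\times k$ permanent $q_j$ of the submatrix obtained by deleting column $j$. This is a complete intersection of codimension $k+1$ because $q_j$ is an irreducible polynomial involving only entries outside column $j$. Then, since the torus $T=\CC^*$ scales the first row with weight one, the fixed locus $V^T\subset V$ consists of matrices with vanishing first row and has codimension $k+1$ in $V$. Inside $V^T$, the ideal of $X_j^T$ is generated by the $k-1$ equations $x_{i,j}=0$ for $2\leq i\leq k$; the permanent $q_j$ already vanishes automatically there, because its first row is zero. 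Therefore $\mathrm{codim}_{V^T} X_j^T = k-1$, and Corollary \ref{cor: codim for k x k+1} immediately yields $\mathrm{rk}(B_1(A_p))=2$, hence $\mathrm{crk}(B_1(A_p))=k-1$.

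As an independent sanity check (which also makes the statement transparent without invoking the codimension formula), one can read the shape of $B_1(A_p)$ off the definition. For $p\in X_j^T$, the $(k-1)\times(k+1)$ submatrix $A_p$ has its $j$-th column equal to zero. The $(i,i')$ entry of $B_1(A_p)$ is the $(k-1)\times(k-1)$ permanent of $A_p$ with columns $i$ and $i'$ deleted; if both $i,i'\neq j$, the zero column of $A_p$ survives in this submatrix and the entry vanishes. Thus $B_1(A_p)$ is supported in row $j$ and column $j$, and being symmetric with zero diagonal and nonzero generic entries, it has rank exactly $2$.

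There is really no serious obstacle here; the only delicate point is not to confuse $\mathrm{codim}_{V^T}$ with $\mathrm{codim}_V$ when applying Corollary \ref{cor: codim for k x k+1}, and to check that $q_j$ adds no further equation after restricting to $V^T$. Both of these are routine once the $T$-action and the components from Remark \ref{CompCol} are spelled out.
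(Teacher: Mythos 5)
Your proof is correct, and in fact you give two arguments. Your ``sanity check'' paragraph is essentially the paper's proof verbatim: on the component $X_j$ the submatrix $A_p$ has a zero $j$-th column, so every entry of $B_1(A_p)$ whose indices both avoid $j$ is a permanent of a matrix with a zero column, hence zero; the matrix is therefore supported on row $j$ and column $j$, is symmetric with zero diagonal, and so has rank exactly $2$ for generic $p$. Your primary argument is a genuinely different, indirect route: you compute $\mathrm{codim}\,X_j = k+1$ (complete intersection of the $k$ linear forms in column $j$ and the irreducible complementary permanent $q_j$) and $\mathrm{codim}_{V^T} X_j^T = k-1$ (the linear equations survive, while $q_j$ becomes identically zero on $V^T$ because its first row vanishes), and then back out $\mathrm{rk}(B_1(A_p)) = 2$ from Corollary~\ref{cor: codim for k x k+1}. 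Both are valid; the paper's direct computation is shorter and requires no input about the codimension of $X_j$, while your indirect route is a nice illustration of how the codimension formula can be used ``in reverse'' and doubles as a check that the numbers in Remark~\ref{CompCol} are consistent with the type classification.
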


The locus $\mathcal X_1\subset V^T$ is given by the vanishing locus of $\det(B_{1})$ from which one 
removes the locus of larger corank. Hence this is a codimension-one constructible set inside $V^T$, 
which is possibly reducible. 

\begin{corollary}\label{cor: irred corresponding to line bundles}
Let $X$ be an irreducible component of $Y$ such that the general point of $X^T$ belongs to $ \mathcal X_1$,
i.e. $X$ is of type one. Then $\mathrm{codim}\ X \geq k+1$. 
\begin{proof}
By Corollary \ref{cor: codim for k x k+1}, we have $\mathrm{codim}\ X = \mathrm{codim}_{V^T} X^T+k$. Note
that we have $\mathrm{codim}_{V^T} X^T\geq \mathrm{codim}_{V^T} \mathcal X_1\geq 1$, which shows the inequality. 
\end{proof}
\end{corollary}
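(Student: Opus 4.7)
The plan is to derive the bound directly from Corollary \ref{cor: codim for k x k+1}, which expresses the codimension of an irreducible component $X$ as the sum of two contributions: the codimension of its torus-fixed locus $X^T$ inside the fixed space $V^T$, and the rank of the matrix $B_1(A_p)$ at a general fixed point $p\in X^T$. The task is thus to produce a lower bound for each summand under the assumption that $X$ is of type one.

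First I would read off the rank contribution. By hypothesis, the general $p\in X^T$ lies in $\mathcal X_1$, meaning $\mathrm{crk}(B_1(A_p))=1$. Since $B_1(A_p)$ is a $(k+1)\times(k+1)$ matrix, this forces $\mathrm{rk}(B_1(A_p))=k$. Next I would bound the fixed-locus codimension. Because $X^T$ is irreducible and its generic point belongs to $\mathcal X_1$, we have $X^T\subset \overline{\mathcal X_1}$, and $\overline{\mathcal X_1}$ is a proper subvariety of $V^T$ cut out (set-theoretically) by the vanishing of $\det(B_1)$, viewed as a polynomial on $V^T$. As soon as this determinant is not identically zero on $V^T$, one gets $\mathrm{codim}_{V^T}\overline{\mathcal X_1}\geq 1$ and hence $\mathrm{codim}_{V^T}X^T\geq 1$.

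Combining the two bounds via Corollary \ref{cor: codim for k x k+1} yields
\[
\mathrm{codim}\ X \;=\; \mathrm{codim}_{V^T} X^T + \mathrm{rk}(B_1(A_p)) \;\geq\; 1 + k \;=\; k+1,
\]
which is the claim.

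The only point that needs a small sanity check is the nontriviality of $\det(B_1)$ on $V^T$, i.e. the fact that $\mathcal X_1$ is genuinely a codimension-one constructible set, as asserted in the paragraph preceding the corollary; this is immediate since one can exhibit an $A\in V^T$ with $B_1(A)$ of full rank $k+1$ (for instance, by a direct computation on a generic choice of the $(k-1)\times(k+1)$ block $A$, using that the $(k-1)\times(k-1)$ permanents filling $B_1$ are algebraically independent in view of Proposition \ref{algebraicindependence}). No further obstacle is expected, as the corollary is really a bookkeeping consequence of the weight decomposition of the tangent space established in Corollary \ref{cor:tang at P kx(k+1)}.
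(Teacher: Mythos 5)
Your argument is correct and follows the paper's proof exactly: both apply Corollary~\ref{cor: codim for k x k+1} to write $\mathrm{codim}\,X = \mathrm{codim}_{V^T}X^T + \mathrm{rk}(B_1(A_p))$, read off $\mathrm{rk}(B_1(A_p))=k$ from the type-one hypothesis, and bound $\mathrm{codim}_{V^T}X^T\geq 1$ since $X^T\subset\overline{\mathcal X_1}\subsetneq V^T$. One small inaccuracy in your closing remark: the entries of $B_1(A)$ are $(k-1)\times(k-1)$ permanents of the $(k-1)\times(k+1)$ matrix $A$ obtained by deleting \emph{two} columns, so Proposition~\ref{algebraicindependence} (about the $k\times k$ permanents of a $k\times(k+1)$ matrix with one column removed) does not literally apply — but this is immaterial, since the nonvanishing of $\det(B_1)$ at a generic $A\in V^T$ is exactly what the paper records when identifying $W=V^T$ as the type-$0$ component, and can be verified by a single explicit example.
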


\begin{proposition}\label{prop: irred comp containing kirkup matrix}
Every irreducible component containing the Kirkup matrix $\mathcal K_{k,k+1}$ is of type one and so it has codimension at least $k+1$. 
\begin{proof}
Let $X$ be an irreducible component of $Y$ containing $\mathcal K_{k,k+1}$. 
Let 
$p_k$ be the corresponding point of $\mathcal K_{k,k+1}$. To prove the statement it is enough to show that $X^T\cap \mathcal X_1\neq \emptyset$ by Corollary \ref{cor: irred corresponding to line bundles}. To check the validity of the latter statement, it is sufficient to prove that $\mathrm{rk}(B_1(A_{p_k}))=k$. The upper-left $k\times k$ submatrix of $B_1(A_{p_k})$ is of the form 
\[
E=\begin{pmatrix}
0 & a & \cdots & a & b \\
a & 0 &  \cdots & a & b \\
\vdots & \vdots & \ddots & \vdots & \vdots \\ 
a & a & \cdots & 0 & b\\
b & b & \cdots & b & 0\\
\end{pmatrix},
\]
where $a,b\neq 0$. The main diagonal of $E$ consists of zeros. One has $\mathrm{rk}(E)=k$. 
To see this, first note that the $(k-1)\times (k-1)$ submatrix $A$ of $E$ only consisting of $a$'s
and zeros on the main diagonal has full-rank $k-1$: indeed the vector ${\bf 1}=(1,\ldots,1)\in \CC^{k-1}$ is in the row span of $A$ (add up all the rows and scale), and so every standard vector in $\CC^{k-1}$ is in its row span (subtract from ${\bf 1}$ each scaled row). Now, one can easily check that the last row of $E$ cannot be linearly dependent from the first $k-1$ rows. Thus $\mathrm{rk}(B_1(A_{p_k}))=k$. 
\end{proof}
\end{proposition}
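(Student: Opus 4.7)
The plan is to apply Corollary \ref{cor: irred corresponding to line bundles}: showing that $X$ has type one yields the codimension bound $\mathrm{codim}(X)\geq k+1$ directly. First observe that $\mathcal K_{k,k+1}$ has nonzero first row while $W = V^T$ consists of matrices with vanishing first row, so $\mathcal K_{k,k+1}\in X$ forces $X\neq W$. By Theorem \ref{prop:irred comps of k x k+1}, type zero would give $X = T^1_{X^T,Y} = X^T \subset V^T = W$, contradicting $X \neq W$; hence the generic rank of $B_1(A_p)$ on the (irreducible) variety $X^T$ is at most $k$, and by lower semicontinuity of matrix rank this bound then holds at every point of $X^T$. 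Reducing the problem, it suffices to exhibit a single point $p\in X^T$ at which $\mathrm{rk}(B_1(A_p))$ attains the value $k$, so that the corank there (and hence generically, again by semicontinuity) equals one.

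First I would take $p_k := \varphi_{t\to 0}(\mathcal K_{k,k+1})\in X^T$, the limit of the Kirkup matrix under the torus action, obtained by setting its first row to zero (this lies in $X^T$ because $X$ is closed and $T$-invariant). Since $B_1$ depends only on the bottom $k-1$ rows of its argument, $B_1(A_{p_k})$ coincides with $B_1$ evaluated at $\mathcal K_{k,k+1}$ itself. Next I would exploit the key structural feature: the first $k-1$ columns of the bottom $k-1$ rows of $\mathcal K_{k,k+1}$ are identical all-ones columns, while only columns $k$ and $k+1$ carry the special entries. This forces the $(k-1)\times(k-1)$ permanents defining the upper-left $k\times k$ submatrix $E$ of $B_1(A_{p_k})$ to take only two nonzero values $a$ (when the two omitted columns are both among the first $k-1$ all-ones columns) and $b$ (when one of them is column $k$), together with zeros on the diagonal.

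To close, I would verify $a,b\neq 0$ via the identity that the permanent of a $(k-1)\times(k-1)$ matrix with $k-3$ all-ones columns and two additional columns $c$ and $d$ equals $(k-3)!\bigl[(\sum_i c_i)(\sum_i d_i)-\sum_i c_i d_i\bigr]$, plugging in the Kirkup entries $2-3k$, $2-2k$, $k$, $(k-2)(k-1)$, $(2k-1)(k-2)$ and checking that the resulting polynomials in $k$ are nonvanishing for $k\geq 3$. For $\mathrm{rk}(E) = k$, I would first show the $(k-1)\times(k-1)$ sub-block with $a$'s off-diagonal is invertible (summing its rows produces a nonzero multiple of $\mathbf{1}$, so every standard basis vector lies in its row span), and then observe that the last row $(b,\ldots,b,0)$ of $E$ is linearly independent of the span of the first $k-1$ rows by comparing final entries. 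Combined with the semicontinuity upper bound, this yields $\mathrm{rk}(B_1(A_{p_k}))=k$, so $p_k\in\mathcal X_1$ and $X$ is of type one.

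The main obstacle will be the explicit evaluation of $a$ and $b$: while the abundance of all-ones columns collapses each permanent to an elementary polynomial expression in the Kirkup entries, one still needs careful arithmetic to confirm that no unexpected cancellation produces $a=0$ or $b=0$ for some small $k$. The rank argument on $E$ is purely structural and should fall out cleanly once the two-value form of $E$ is in hand.
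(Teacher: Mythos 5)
Your proposal is correct and follows essentially the same route as the paper: work at $p_k=\phi_{t\to 0}(\mathcal K_{k,k+1})\in X^T$, compute the structured form of the principal $k\times k$ block $E$ of $B_1(A_{p_k})$, and show $\mathrm{rk}(E)=k$ via the full-rank sub-block with constant off-diagonal entries plus independence of the last row. You make explicit two points the paper leaves implicit — ruling out type $0$ via $\mathcal K_{k,k+1}\notin W$, and verifying $a,b\neq 0$ by the closed-form permanent formula (which indeed evaluates to $a=-2(k-3)!(3k-2)$ and $b=(k-2)!(3k-2)$) — but the underlying argument is the same.
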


\begin{remark}
    To understand dimensions of irreducible components of $P_{k,k+1}$ one simply needs to understand dimensions of irreducible components of each $\mathcal X_i$. Indeed, every irreducible component of $P_{k,k+1}$ comes from a rank $i$ vector bundle over a component of $\mathcal X_i$. However, it is not easy to conclude that components of $\mathcal X_i$ have codimension $i$ in $W$. As we have seen some $\mathcal X_i$ are empty, thus it is not true that $\mathcal X_{i+1}$ is contained in the closure of $\mathcal X_i$. 
\end{remark}

\subsection{Codimension of $P_{4,5}$}\label{subsec45}

We use the correspondence with vector bundles, to prove that the codimension of 
all irreducible components of $P_{4,5}$ is $5$. This gives an alternate proof 
of the case $k=4$ in Proposition \ref{cases:3x4 and 4x5}. 

\begin{center}
\begin{table}
    \begin{tabular}{|c|c|}
    \hline
        Type & Irreducible components\\
        \hline
       $0$  & $V^T$\\
         \hline
       $1$  & Kirkup component\\
         \hline
       $2$  & No components \\
         \hline
        $3$ & There exist such components\\
         \hline
        $4$ & No components\\
         \hline
         $5$ & Cones over $P_{3,5}$\\
         \hline
    \end{tabular}
   \caption{Irreducible components of $P_{4,5}$} \label{tab:4x5}
\end{table}
\end{center}

\begin{proposition}
All the irreducible components of $P_{4,5}$ have codimension $5$. In Table \ref{tab:4x5}, 
we organize them according to their type. 
\begin{proof}
We employ \texttt{Macaulay2} to perform the required computations. In \S\ref{codes}, we provide a script to check some of the cases reported in the table. 
For instance, the script checks that $\det B_1(A_p)$ is smooth in codimension one after intersecting scheme-theoretically with a subspace. As singular points remain singular after such intersection, this implies that $\det B_1(A_p)$ is smooth in codimension one. If it had several components, then each one would be of codimension one in $W$, and as all varieties we deal with are cones over projective varieties, the components would need to intersect in codimension one inside $\det B_1(A_p)$ \cite[Theorem 2.22]{MS21}. In particular, the variety would have to be singular in codimension one. Thus, we conclude that $\det B_1(A_p)$ is irreducible. 
This implies that there is a unique Kirkup component.

In a similar way, by intersecting $\mathcal X_{2}$ with a fixed linear subspace, the script verifies that there are no type $2$ irreducible components in codimension $\leq 5$. Since $I(P_{4,5})$ is generated by five polynomials, Krull's principal
ideal theorem implies that there are no type $2$ components. 
\end{proof}
\end{proposition}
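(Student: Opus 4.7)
The plan is to exploit the hierarchy of types established in \S\ref{subsec:correspondence2} and the codimension formula $\mathrm{codim}\,X = \mathrm{codim}_{V^T}(X^T) + \mathrm{rk}(B_1(A_p))$ from Corollary \ref{cor: codim for k x k+1}, applied to the $\CC^{*}$-action scaling the first row of $V = \CC^{4\times 5}$. For each type $i \in \{0,1,2,3,4,5\}$ I will decide whether irreducible components of $Y=P_{4,5}$ of that type exist and, when they do, show that the corresponding fixed-point stratum has codimension exactly $i$ in $V^T$, so that $\mathrm{codim}\,X = i + (5-i) = 5$.

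Three of the six types are immediate. Type $0$ is $X = V^T$ itself, a linear component of codimension $5$ by Proposition \ref{linearspaces}. Type $4$ is ruled out by Proposition \ref{prop: no component corresponding to rank 1}, since $B_1$ is symmetric with vanishing diagonal and cannot have rank one. Type $5$ forces $B_1(A_p)\equiv 0$, i.e.\ $A_p\in P_{3,5}$; by the already-established case $k=3$ of Proposition \ref{cases:3x4 and 4x5}, components of $P_{3,5}$ have codimension $5$ in $F^{3\times 5}$, and the first-row direction is free, giving cones of codimension $5$ in $V$.

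The heart of the argument is Types $1$, $2$ and $3$. For Type $1$, Proposition \ref{prop: irred comp containing kirkup matrix} guarantees a component of codimension $\geq 5$ containing the Kirkup matrix, and equality reduces to showing $\mathrm{codim}_{V^T}(X^T)=1$, which in turn follows once the hypersurface $\{\det B_1 = 0\}\subset V^T$ is shown to be \emph{irreducible}. I would establish this by a \texttt{Macaulay2} verification that $\{\det B_1=0\}$ is smooth in codimension one: after restricting scheme-theoretically to a suitable linear slice, one checks that the singular locus of the restriction has codimension at least $2$. Since singular points persist under transverse slicing, the ambient hypersurface is smooth in codimension one, and then the standard cone argument (distinct components of a cone meet in codimension one, forcing singularities there) yields irreducibility.

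For Type $2$, observe that $I(P_{4,5})$ is generated by the $5$ maximal permanents, so Krull's principal ideal theorem bounds every component's codimension by $5$. A Type $2$ component would demand $\mathrm{codim}_{V^T}(X^T)\leq 2$ (since $\mathrm{rk}(B_1)=3$), and a second \texttt{Macaulay2} script excludes this possibility by checking that the relevant corank-$2$ stratum carries no such irreducible piece. Type $3$ components are then exhibited directly by a computational example, and the codimension formula forces their codimension in $V$ to equal $5$. I expect the critical obstacle to be the irreducibility of $\{\det B_1=0\}$: without it, the Kirkup stratum could split into several components with $\mathrm{codim}_{V^T}(X^T)>1$, which would a priori produce components of codimension strictly greater than $5$ and break the uniformity of the table.
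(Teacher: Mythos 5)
Your proposal follows essentially the same strategy as the paper: use the torus action and the codimension formula of Corollary \ref{cor: codim for k x k+1} to run a type-by-type analysis, dispose of types $0$, $4$, $5$ by structural arguments, handle the Kirkup (type $1$) component via irreducibility of $\{\det B_1=0\}$ (verified by \texttt{Macaulay2} through a slice and a singular-in-codimension-one argument), rule out type $2$ by a \texttt{Macaulay2} codimension computation combined with Krull's principal ideal theorem, and let the same codimension bound on the corank stratum force type $3$ components to sit in codimension $5$. The paper's proof is terser and only spells out types $1$ and $2$ explicitly, relegating the rest to Table \ref{tab:4x5}; your version is a bit more systematic in making the easy types explicit, which is a fair presentation choice rather than a different method.

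One small inaccuracy: for type $5$ you cite Proposition \ref{cases:3x4 and 4x5} for the codimension of $P_{3,5}$, but that proposition only treats $P_{3,4}$ and $P_{4,5}$; the statement you want is the $k=3$, $n=5$ instance of Theorem \ref{codims} (whose proof for $n=5$ depends only on the $k\le 3$ base cases, so there is no circularity). Also note that for type $3$ you phrase it as ``exhibited directly by a computational example, and the codimension formula forces their codimension in $V$ to equal $5$''; strictly speaking, exhibiting a component shows existence, while the bound $\mathrm{codim}_{V^T}\overline{\mathcal{X}_2}\ge 3$ (the same computation used to kill type $2$) combined with Krull's upper bound $\le 5$ is what pins the codimension of every type $3$ component at exactly $5$. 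This is what the paper implicitly does; your wording should make the two-sided bound explicit.
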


\subsection{Components of $P_{5,6}$}\label{subsec56}

We use the correspondence with vector bundles to prove that the codimension of 
all the irreducible components of $P_{5,6}$ is $6$. 

\begin{proposition}\label{5x6 case}
All the irreducible components of $P_{5,6}$ have codimension $6$. In Table \ref{tab:5x6}, 
we organize them according to their type. 
\begin{proof}
We employ \texttt{Macaulay2} to perform the required computations. In \S\ref{codes}, we provide a script to check the case where $\mathrm{rk}(B_1(A_p))=2$, i.e. 
the irreducible components of type $4$. Let $X$ be an irreducible component 
of this type so we have $X^T\subset \mathcal X_4$, where 
$\mathcal X_4$ is defined by the $3\times 3$ minors of the matrix $B_1(A)$,
$A$ being a generic matrix in $V^T$. By Corollary \ref{cor: codim for k x k+1},
$\mathrm{codim} \ X = \mathrm{codim}_{V^T} X^T + \mathrm{rk}(B_1(A)) = \mathrm{codim}_{V^T} X^T + 2$. 
So $\mathrm{codim}\ X\geq 6$ is equivalent to verifying that 
$\mathrm{codim}_{V^T} X^T\geq 4$. Since $X^T\subset \overline{\mathcal X_4}$, it is sufficient to check 
that $\mathrm{codim}_{V^T} \mathcal X_4\geq 4$. Let $\PP(\overline{\mathcal X_4})\subset \PP(V^T)$ be the corresponding projective variety. Then it is enough 
to find a $L=\PP^3\subset \PP(V^T)$ such that their intersection $\PP(\mathcal X_4)\cap L$ is empty \cite[Theorem 2.22]{MS21}. 
The choice of such a suitable $L$ is reported on the script. The output of the script
reads: \texttt{[gb]{12}(400){13}(420){14}(840)number of (nonminimal) gb elements = 455,
number of monomials = 49455, used 33.8193 seconds}. With a similar code, we also check all the other cases. For instance, in type $3$, we find a $\PP^2$ such that $\PP(\mathcal X_3)\cap \PP^2\subset \PP(V^T)$ is empty. To check that in type $1$ we have a unique Kirkup irreducible component, we confirm that the singular locus of the set defined by $\det(B_1(A))=0$, for $A\in V^T$, has codimension higher than two in $V^T$. 
\end{proof}
\end{proposition}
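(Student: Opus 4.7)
My plan is to use the correspondence developed in \S\ref{subsec:correspondence2} to reduce the statement to bounding, for each admissible type $i$, the codimension of $\mathcal X_i \subset V^T$. Indeed, by Corollary \ref{cor: codim for k x k+1}, for any irreducible component $X$ of $P_{5,6}$ and a general $p \in X^T$,
\[
\mathrm{codim}\ X \;=\; \mathrm{codim}_{V^T} X^T + \mathrm{rk}(B_1(A_p))\;=\; \mathrm{codim}_{V^T} X^T + (6-i),
\]
where $i$ is the type of $X$. On the other hand, since the ideal $I(P_{5,6})$ is generated by the six $5\times 5$ permanents of a $5\times 6$ matrix, Krull's principal ideal theorem gives $\mathrm{codim}\ X \leq 6$. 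So it suffices to show that for each admissible $i$ we have $\mathrm{codim}_{V^T}\mathcal{X}_i \geq i$, which combined with the equality above forces $\mathrm{codim}\ X = 6$.

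I would then dispose of the easy types first. Type $0$ is literally $V^T = W$, which has codimension $6$. Type $6$ consists of cones over irreducible components of $P_{4,6}$, which by Theorem \ref{codims} has codimension $6$, so these cones again have codimension $6$ in $V$. Type $5$ is excluded by (the argument of) Proposition \ref{prop: no component corresponding to rank 1}: such a component would force $\mathrm{rk}(B_1(A_p))=1$, which is impossible for a symmetric matrix with vanishing diagonal. This reduces the problem to types $1$, $2$, $3$, $4$.

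For the remaining types I would invoke, as in the proof for $P_{4,5}$, a secant/linear section argument: to show $\mathrm{codim}_{V^T} \mathcal X_i \geq i$, it is enough (by e.g.\ \cite[Theorem 2.22]{MS21}, since everything is a cone over a projective variety) to exhibit a linear subspace $L \subset \PP(V^T)$ of dimension $\dim \PP(V^T) - i$ whose intersection with $\PP(\mathcal X_i)$ is empty. Concretely, for type $4$ I would pick a $\PP^3 \subset \PP(V^T)$ and verify via \texttt{Macaulay2} that the ideal of $3 \times 3$ minors of $B_1(A)$ restricted to that $\PP^3$ cuts out the empty set; analogously for type $3$ using a $\PP^2$ and the $4\times 4$ minors, and for type $2$ using a $\PP^1$ and the $5\times 5$ minors. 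For type $1$, the locus $\mathcal X_1$ is the hypersurface $\{\det B_1(A) = 0\}$, so it automatically has codimension $1$; to match Table \ref{tab:5x6} I would further check (via \texttt{Macaulay2}) that its singular locus has codimension $\geq 2$ inside $V^T$, which by \cite[Theorem 2.22]{MS21} forces irreducibility of $\{\det B_1(A)=0\}$ and hence uniqueness of the Kirkup component, just as in Proposition \ref{prop: irred comp containing kirkup matrix}.

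The main obstacle is computational: the hardest step is the type $4$ case, where one must compute (a part of) a Gröbner basis of the $3\times 3$ minors of a $6\times 6$ symmetric matrix with zero diagonal after restricting to a well-chosen $\PP^3$. The key heuristic for success is to choose $L$ sparsely (many coordinates set to zero) so that the ambient symmetry of $B_1$ keeps the intermediate polynomials manageable; a poor choice yields a computation that does not terminate. Once such an $L$ is exhibited and verified, the proposition follows from the combination of the codimension formula, Krull's bound, and the inductive knowledge of $P_{4,6}$.
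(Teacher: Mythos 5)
Your proposal matches the paper's approach: both use Corollary~\ref{cor: codim for k x k+1} to reduce the codimension bound to a codimension bound on $\mathcal X_i$ inside $V^T$, combine this with Krull's principal ideal theorem for the upper bound $\mathrm{codim}\,X\leq 6$, and certify the needed lower bound on $\mathrm{codim}_{V^T}\mathcal X_i$ via a linear-section argument verified in \texttt{Macaulay2}.

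There is, however, a slip in the stated rule for the linear section. You write that $L\subset\PP(V^T)$ should have dimension $\dim\PP(V^T)-i$, but the correct dimension is $i-1$. If $L=\PP^{i-1}$ is disjoint from $\PP(\mathcal X_i)\subset\PP(V^T)$, the projective dimension theorem forces $\mathrm{codim}\,\PP(\mathcal X_i)\geq i$, which is exactly what you need. With your formula, for type $4$ you would need a $\PP^{19}$ disjoint from $\PP(\mathcal X_4)$ inside the $23$-dimensional $\PP(V^T)$; this would certify $\mathrm{codim}\geq 20$ (far more than required), and such a $\PP^{19}$ will in general not exist since one expects $\mathcal X_4$ to have codimension close to $4$. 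Your concrete choices of $\PP^3$, $\PP^2$, $\PP^1$ for types $4$, $3$, $2$ do follow the correct rule $\dim L=i-1$ and match what the paper does, so this is an error only in the abstract statement, not in the plan. The rest of the framework is sound, including the (non-circular) appeal to $\mathrm{codim}\,P_{4,6}=6$ for the type-$6$ components: that case only needs the base cases $P_{h,h+1}$ with $h\leq 4$, not $P_{5,6}$ itself.
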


\begin{center}
\begin{table}
    \begin{tabular}{|c|c|}
    \hline
        Type & Irreducible components\\
        \hline
       $0$  & $V^T$\\
         \hline
       $1$  & Kirkup component\\
         \hline
       $2$  &  No components\\
         \hline
        $3$ &  Potential components in codimension $6$\\
         \hline
         $4$ & There exist such components \\
         \hline
         $5$ & No components \\
         \hline
         $6$ & Cones over $P_{4,6}$ \\
         \hline
    \end{tabular}
   \caption{Irreducible components of $P_{5,6}$} \label{tab:5x6}
\end{table}
\end{center}

\subsection{Singular locus of the permanental hypersurface: von zur Gathen's problem}\label{subsec:singlocus}

Let $k\geq 3$, $M$ be a generic $k\times k$ square matrix, and let $P = \lbrace \mathrm{perm}(M)=0\rbrace$ be the $k\times k$ permanental hypersurface. A folklore question asks for a description of the singular locus of this hypersurface in terms of numerical invariants of various kinds. This is a challenging and poorly understood question, in sharp contrast with the singular locus of the determinantal hypersurface that has natural interpretation in terms of rank of matrices. 

The codimension of this set is currently unknown for $k\geq 5$. A first result towards determining its codimension, which was so far the strongest in this direction, is due to von zur Gathen: 

\begin{theorem}[von zur Gathen \cite{VG}]\label{vzgthm}
Let $k\geq 3$. The singular locus $\mathrm{Sing}(P) = \lbrace \mathrm{prk}(M)\leq k-2\rbrace$ has codimension between $5$ and $2k$. 
\end{theorem}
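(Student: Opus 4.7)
The plan is to prove the two inequalities separately, starting with an explicit construction for the upper bound and then a refined codimension analysis for the lower bound.

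\textbf{Upper bound $\mathrm{codim}\, \mathrm{Sing}(P) \leq 2k$.} I would exhibit an explicit linear subspace of codimension $2k$ contained in $\mathrm{Sing}(P)$. The natural choice is $L = \{M \in \CC^{k\times k} : x_{i,1} = x_{i,2} = 0 \text{ for all } i\}$, the locus where the first two columns of $M$ vanish; clearly $\mathrm{codim}\, L = 2k$. For any $M \in L$, every $(k-1)\times(k-1)$ submatrix uses $k-1$ of the $k$ columns, and since only $k-2$ columns are nonzero it must include at least one of the two zero columns. Its permanent therefore vanishes, so $\mathrm{prk}(M) \leq k-2$, i.e.\ $L \subset P_{k-1,k} = \mathrm{Sing}(P)$. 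Hence $\mathrm{codim}\, \mathrm{Sing}(P) \leq 2k$.

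\textbf{Lower bound $\mathrm{codim}\, \mathrm{Sing}(P) \geq 5$.} Let $X$ be an irreducible component of $Y = P_{k-1,k}$. The plan is to refine the case analysis of Proposition \ref{lower-upperboundscodim} and Corollary \ref{cor:perm is irreducible}. Either $X$ is a cone over an irreducible component of $P_{k-2,k}(M')$ for some $(k-1)\times k$ submatrix $M'$ obtained by dropping a row (in which case one iterates the analysis on the smaller matrix until enough codimension accumulates), or, after permuting rows and columns, the upper-left $(k-2)\times(k-2)$ permanent of $M$ is nonzero at the generic point of $X$.

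In the latter case, the four $(k-1)\times(k-1)$ permanents indexed by rows $\{1,\dots,k-2,i\}$ and columns $\{1,\dots,k-2,j\}$, $(i,j) \in \{k-1,k\}^2$, vanish on $X$. Expanding each along its last row and isolating the coefficient of $x_{i,j}$ (which is the nonzero upper-left $(k-2)\times(k-2)$ permanent), each such relation expresses $x_{i,j}$ as a rational function of the other entries. As the four variables $x_{i,j}$ with $(i,j)\in\{k-1,k\}^2$ are distinct, these give four independent rational constraints, whence $\mathrm{codim}\, X \geq 4$, as already used in Corollary \ref{cor:perm is irreducible}. For a fifth independent constraint I would invoke an additional $(k-1)\times(k-1)$ permanent with a different row/column selection---e.g.\ rows $\{1,\dots,k-3,k-1,k\}$ and columns $\{1,\dots,k-1\}$---whose vanishing, after substituting the four previous relations, should yield a genuinely new algebraic condition on the remaining entries.

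The hard part will be verifying this algebraic independence. Equivalently, at a smooth point $p$ of $X$, the Jacobian of the ideal generated by all $k^2$ permanents of $(k-1)\times(k-1)$ submatrices should have rank at least $5$; since the entries of this Jacobian are themselves $(k-2)\times(k-2)$ permanents of $M(p)$, the task reduces to exhibiting a nonvanishing $5\times 5$ minor. I would either carry out this combinatorial computation directly, by identifying a monomial in lower-order permanents that survives because it cannot be cancelled by any other term in the expansion, or deduce it from an algebraic-independence statement for a carefully chosen family of $(k-1)\times(k-1)$ permanents in the spirit of Proposition \ref{algebraicindependence}.
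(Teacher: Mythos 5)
The paper does not prove this theorem: it is cited directly from von zur Gathen \cite{VG}, and the authors use it as a black box (their own contribution, Theorem~\ref{codim of sing loc}, improves the lower bound to $6$ for $k\geq 6$ via a torus-action argument of a quite different flavour). So there is no ``paper's own proof'' to compare against; I am evaluating your attempt on its own merits.

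Your upper bound is correct. The linear subspace $L=\{x_{i,1}=x_{i,2}=0\}$ has codimension $2k$, and every $(k-1)\times(k-1)$ submatrix must use one of the two zero columns, so all $(k-1)\times(k-1)$ permanents vanish on $L$; hence $L\subset\mathrm{Sing}(P)$ and $\mathrm{codim}\,\mathrm{Sing}(P)\leq 2k$. This is the expected argument.

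Your lower bound has a genuine gap at the decisive step, which you yourself flag. In case (b), the four permanents on rows $\{1,\dots,k-2,i\}$, columns $\{1,\dots,k-2,j\}$, $(i,j)\in\{k-1,k\}^2$, determine the four variables $x_{k-1,k-1},x_{k-1,k},x_{k,k-1},x_{k,k}$ and yield $\mathrm{codim}\geq 4$, exactly as in Corollary~\ref{cor:perm is irreducible}. But the claim that a fifth $(k-1)\times(k-1)$ permanent, after substituting these four relations, is a nontrivial rational constraint on the remaining $k^2-4$ entries is precisely the content of the theorem beyond codimension $4$, and it is not established. Your fallback plan---showing the Jacobian of the ideal has rank at least $5$ at a smooth point of $X$ by exhibiting a nonvanishing $5\times 5$ minor---has a structural difficulty: the point $p$ must be a smooth point of the \emph{unknown} component $X$, and you have no control over where $X$ sits, so you cannot simply pick a convenient matrix at which to evaluate. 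One needs to argue on the level of function fields or work with a generic point of $X$ and rule out the identical vanishing of the substituted fifth permanent; this is the nontrivial computation that a complete proof would have to supply.

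Two smaller issues. First, your case (a) is written as ``$X$ is a cone over an irreducible component of $P_{k-2,k}(M')$ for some $(k-1)\times k$ submatrix $M'$,'' but in the paper's notation $P_{k-2,k}$ lives in $F^{(k-2)\times k}$, not $F^{(k-1)\times k}$. What you must mean is that the $(k-1)\times k$ submatrix $M'$ has $\mathrm{prk}(M')\leq k-3$, i.e.\ all its $(k-2)\times(k-2)$ permanents vanish; with this reading the cone does lie inside $\mathrm{Sing}(P)$. Second, ``iterate until enough codimension accumulates'' needs to be carried out: the recursion changes the matrix shape (from $k\times k$ with $(k-1)$-permanents to $(k-1)\times k$ with $(k-2)$-permanents), and one step of the transcendence argument there already determines the six variables $x_{i,j}$ with $i\in\{k-2,k-1\}$, $j\in\{k-2,k-1,k\}$, giving $\mathrm{codim}\geq 6$; so case (a) is fine, but you should say this rather than gesture at an iteration.
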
 

Note that $Y=\mathrm{Sing}(P)$ is defined by the $(k-1)\times (k-1)$ permanents of $M$. Let $V$ be the vector space
of $k\times k$ complex matrices and let $X$ be an irreducible component of $Y$. We fix the following $T=\CC^{*}$-action 
on $V$: given $p\in V$, let $t\cdot p$ be the matrix where the first two rows are those of $p$ scaled by $t$, while the other entries are unchanged. Hence $V^T$ is the linear space given by the matrices of the form 
\[
\begin{pmatrix} 
0 & \cdots & \cdots & 0  \\
0 & \cdots & \cdots & 0 \\
\ast & \ast & \ast & \ast \\
\vdots & \cdots & \cdots & \vdots\\
\ast & \ast & \ast & \ast 
\end{pmatrix}. 
\]

Recall that, as at the beginning of \S\ref{sec:torus action}, we have a surjective map $\phi_{t\rightarrow 0}: X\rightarrow X^T\subset V^T$. 

\begin{remark}\label{rem:Jacobian at p in Sing}
Let $p\in V^T$ be the matrix $\begin{pmatrix} 
{\bf 0} \\
{\bf 0} \\
A_p \\
\end{pmatrix}$, 
where $A_p$ is a $(k-2)\times k$ matrix. Then the $k^2\times k^2$ Jacobian $J(Y)_p$ of $Y$ at $p$ has the following form 
\[
J(Y)_p=\bordermatrix{ & S_1 & S_2 & S_3 \cr
x_{1,h} & {\bf 0} & L_p & {\bf 0} \cr
x_{2,h} & L_p & {\bf 0} & {\bf 0} \cr 
x_{i,j} & {\bf 0} & {\bf 0} & {\bf 0}}. \qquad
\]
Here $S_1$ is the set of permanents that do not use the first row, $S_2$ is the set of permanents 
that do not use the second row, and $S_3$ is the set of permanents that use the first and second rows. 
Moreover, the $k\times k$ matrix $L_p=(\ell_{ij})$ is such that $\ell_{ij}$ is the $(k-2)\times (k-2)$ permanent 
of $A_p$ which does not use columns $i$ and $j$. Note that $L_p$ is symmetric with zeros on the main diagonal. 
\end{remark}

\begin{corollary}\label{cor:rank of tangent bundle}
With the same notation as in Remark \ref{rem:Jacobian at p in Sing}, whenever $p\in \phi_{t\rightarrow 0}(X)=X^T$ with $X\neq V^T$, the tangent space to $Y$ at $p$ is: 
\[
T_{Y,p} = V^T\oplus \ker(L_p)^{\oplus 2}, 
\]
where $T^0_{Y,p}=V^T$ and $T^1_{Y,p} = \ker(L_p)^{\oplus 2}$. In particular, 
$\dim_{\CC} T^1_{Y,p} = 2\dim_{\CC} \ker(L_p)$. 
\begin{proof}
The tangent space $T_{Y,p}$ is the kernel of the transpose of $J(Y)_p$. The two copies of $\ker(L_p)$ live
in the span of the first and second rows, respectively. 
\end{proof}
\end{corollary}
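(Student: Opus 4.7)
The plan is to leverage the explicit block decomposition of the Jacobian $J(Y)_p$ recorded in Remark \ref{rem:Jacobian at p in Sing}. Recall that the tangent space $T_{Y,p}$ is exactly the kernel of the transpose of $J(Y)_p$, viewed as a linear map on $V$. So the job reduces to reading off the kernel from the $3\times 3$ block structure whose row-blocks are indexed by the variable-sets $\{x_{1,h}\}$, $\{x_{2,h}\}$, $\{x_{i,j}\}_{i\geq 3}$ and whose column-blocks are indexed by $S_1,S_2,S_3$.

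Write a tangent vector as $v=(v_1,v_2,v_3)$ with $v_1,v_2\in\CC^k$ corresponding to the first two rows and $v_3\in\CC^{k(k-2)}$ corresponding to rows $3,\dots,k$. The $x_{i,j}$ row-block is identically zero, so $v_3$ is completely unconstrained; since this block of directions is precisely $V^T$, one obtains $V^T\subset T_{Y,p}$. The $x_{1,h}$ row-block has $L_p$ only under $S_2$ (all other entries zero), which forces $L_p v_1=0$, so $v_1\in\ker L_p$. Symmetrically, the $x_{2,h}$ row-block has $L_p$ only under $S_1$, forcing $v_2\in\ker L_p$. Combining these three independent constraints yields
\[
T_{Y,p}=V^T\oplus\ker(L_p)\oplus\ker(L_p),
\]
with the two copies of $\ker(L_p)$ embedded in the first- and second-row directions of $V$, respectively.

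To finish, I would identify the weights under the $T=\CC^*$-action. By construction, $T$ scales the first two rows with weight one and leaves the bottom $(k-2)\times k$ block fixed. Hence $V^T$ is precisely the weight-zero subspace of $T_{Y,p}$, while the two copies of $\ker(L_p)$, living in the first and second row directions, are weight one. This gives $T^0_{Y,p}=V^T$ and $T^1_{Y,p}=\ker(L_p)^{\oplus 2}$, from which $\dim_\CC T^1_{Y,p}=2\dim_\CC\ker(L_p)$ is immediate.

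The only nontrivial input is the structure of $J(Y)_p$ itself, which is already granted by Remark \ref{rem:Jacobian at p in Sing}: the vanishing of the first two rows of $p$ ensures that derivatives of $S_3$-permanents vanish (each such monomial still carries one factor from each of rows $1$ and $2$), while derivatives of $S_1$- and $S_2$-permanents in the $x_{2,h}$ and $x_{1,h}$ variables, respectively, evaluate to $(k-2)\times(k-2)$ permanents of $A_p$ with two columns deleted, which are exactly the entries of $L_p$. Once this is in hand, the corollary is pure linear algebra and there is no real obstacle.
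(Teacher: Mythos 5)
Your proposal is correct and follows essentially the same route as the paper: compute $T_{Y,p}$ as the kernel of the transposed Jacobian using the block form from Remark \ref{rem:Jacobian at p in Sing}, read off the two copies of $\ker(L_p)$ sitting in the first- and second-row directions, and identify $V^T$ as the weight-zero part. The paper's proof is just a terser statement of the same observation.
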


\begin{corollary}\label{cor:lower bound on codim}
Let $p\in X$ be any irreducible component of $Y=\mathrm{Sing}(P)$. Then the following upper bound holds:
\[
\dim X\leq \dim X^T + 2\dim_{\CC} \ker(L_p). 
\]
Equivalently, one has $\mathrm{codim}_V X\geq \mathrm{codim}_{V^T} X^T +2\mathrm{rk}(L_p)$. 
\begin{proof}
By Proposition \ref{bb in our case}, $X$ is contained in the closure of the vector bundle $T^1_{Y|X^T}$ over
$X^T$. Its rank is $2\dim_{\CC} \ker(L_p)$ by Corollary \ref{cor:rank of tangent bundle}. 
\end{proof}
\end{corollary}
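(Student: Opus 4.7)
The plan is to argue directly via the Białynicki-Birula-type proposition already set up in Proposition \ref{bb in our case}, using the explicit tangent space computation of Corollary \ref{cor:rank of tangent bundle}. Since the chosen $T=\CC^{*}$-action scales only the first two rows of $M$, the variety $Y=\mathrm{Sing}(P)$ is $T$-invariant, and hence so is every irreducible component $X$. Applying Proposition \ref{bb in our case}, one concludes that $X\subseteq T^{1}_{X^{T},Y}$, where the right-hand side is the Zariski closure in $V$ of the total space of the weight-one summand of the tangent sheaf restricted to a suitable dense open $U\subseteq X^{T}$ on which the tangent sheaf is locally free.

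The key point is to identify the rank of the weight-one bundle at a general $p\in X^{T}$. First I would take $p$ generic enough that $p\in U$ and $L_{p}$ attains its generic rank on $X^{T}$ (by lower semi-continuity of matrix rank, this holds on a nonempty open subset of the irreducible variety $X^{T}$). Then Corollary \ref{cor:rank of tangent bundle} gives
\[
T^{1}_{Y,p}=\ker(L_{p})^{\oplus 2}, \qquad \dim_{\CC} T^{1}_{Y,p}=2\dim_{\CC}\ker(L_{p}),
\]
so the rank of $T^{1}_{Y|U}$ is $2\dim_{\CC}\ker(L_{p})$ at a general $p\in X^{T}$.

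Finally, the total space of a rank-$r$ vector bundle over a variety of dimension $d$ has dimension $d+r$, so
\[
\dim X \;\leq\; \dim T^{1}_{X^{T},Y} \;=\; \dim X^{T} \,+\, 2\dim_{\CC}\ker(L_{p}),
\]
which is the desired bound. To translate into codimensions, note that $\dim V-\dim V^{T}=2k$ (we kill two rows of length $k$) and that $L_{p}$ is a $k\times k$ matrix, so $\dim_{\CC}\ker(L_{p})=k-\mathrm{rk}(L_{p})$. A short arithmetic step then yields $\mathrm{codim}_{V}X\geq \mathrm{codim}_{V^{T}}X^{T}+2\,\mathrm{rk}(L_{p})$.

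The only slightly subtle point is the passage from a fiberwise rank identification at one general point to the global dimension bound on $T^{1}_{X^{T},Y}$; this is handled by the standard fact that the rank of a coherent sheaf is locally constant on a dense open subset, together with the irreducibility of $X^{T}$ (inherited from $\phi_{t\to 0}(X)$ being the image of the irreducible $X$). Everything else is a formal consequence of previously established results, so no real obstacle is expected.
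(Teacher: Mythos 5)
Your argument is correct and follows the same route as the paper's: invoke Proposition \ref{bb in our case} to get $X\subseteq T^1_{X^T,Y}$, read off the rank of the weight-one bundle at a general $p\in X^T$ from Corollary \ref{cor:rank of tangent bundle}, and conclude by the dimension formula for the total space of a vector bundle. You have simply made explicit the points the paper leaves implicit (choice of generic $p$ on which $L_p$ attains its generic rank, irreducibility of $X^T=\phi_{t\to 0}(X)$, and the arithmetic translation to codimensions), and the computation $\dim V-\dim V^T=2k$, $\dim_\CC\ker(L_p)=k-\mathrm{rk}(L_p)$ is exactly what is needed.
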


A direct consequence of Corollary \ref{cor:lower bound on codim} is as follows. 

\begin{corollary}\label{cor: if rk at least 3, done}
Let $p\in X^T$ be general. If $\mathrm{rk}(L_p)\geq 3$, then $\mathrm{codim}_V X\geq 6$. 
\end{corollary}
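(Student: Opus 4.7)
The plan is to apply Corollary \ref{cor:lower bound on codim} directly. That corollary provides, for any irreducible component $X$ of $Y=\mathrm{Sing}(P)$ and a general $p\in X^T$, the inequality
\[
\mathrm{codim}_V X \;\geq\; \mathrm{codim}_{V^T} X^T \,+\, 2\,\mathrm{rk}(L_p).
\]
Under the hypothesis $\mathrm{rk}(L_p)\geq 3$ the second summand is at least $6$, and since the first summand is non-negative, the conclusion $\mathrm{codim}_V X\geq 6$ is immediate.

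The only point that deserves a brief justification is why one may speak of $\mathrm{rk}(L_p)$ at a general point of $X^T$. Since the entries of $L_p$ are polynomials in the coordinates of $A_p$, the condition $\mathrm{rk}(L_p)\geq 3$ cuts out a Zariski-open subset of $X^T$ (rank is lower semicontinuous). As $X^T$ is irreducible, this open subset—if nonempty—is dense, so the rank achieved at the generic point used in Corollary \ref{cor:lower bound on codim} coincides with the value appearing in the hypothesis.

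In short, the corollary is a purely formal bookkeeping consequence of the tangent space computation of Corollary \ref{cor:rank of tangent bundle} together with the Bia\l{}ynicki-Birula-type containment of Proposition \ref{bb in our case}. No step presents a real obstacle; the substantive content lies in the upstream identification of $T^1_{Y,p}$ with $\ker(L_p)^{\oplus 2}$. This corollary is precisely the step that turns those structural results into the generic lower bound needed to recover the $6\leq \mathrm{codim}\,\mathrm{Sing}(P)$ half of Theorem \ref{codim of sing loc}, reducing the remaining work to controlling the components of $Y$ for which $\mathrm{rk}(L_p)\leq 2$.
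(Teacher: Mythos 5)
Your argument is exactly the paper's: the statement is labeled a direct consequence of Corollary \ref{cor:lower bound on codim}, and you apply that inequality with $\mathrm{codim}_{V^T} X^T \geq 0$ and $2\,\mathrm{rk}(L_p)\geq 6$. The remark about lower semicontinuity of rank at a general point of $X^T$ is a correct, minor elaboration and does not constitute a different route.
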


\begin{lemma}\label{cor:rank of M is different from 1}
Let $p\in X^T$ be general. Then $\mathrm{rk}(L_p)\neq 1$. 
\begin{proof}
This is analogous to Proposition \ref{prop: no component corresponding to rank 1}. 
\end{proof}
\end{lemma}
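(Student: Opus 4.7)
The plan is to import the argument of Proposition \ref{prop: no component corresponding to rank 1} essentially verbatim, with $L_p$ playing the role of $B_1(A_p)$. The key structural fact powering the proof is already recorded in Remark \ref{rem:Jacobian at p in Sing}: the $k\times k$ matrix $L_p=(\ell_{ij})$ is symmetric with zeros along its main diagonal. Symmetry is immediate from the definition of $\ell_{ij}$ as the $(k-2)\times(k-2)$ permanent of $A_p$ obtained by omitting columns $i$ and $j$, and the diagonal vanishes because the entry $\ell_{ii}$ corresponds to ``omitting column $i$ twice'' and hence equals zero by the convention stated in \S\ref{subsec:kirkup ideal}.

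With this structural input in hand, I would argue by contradiction. Suppose that $\mathrm{rk}(L_p)=1$. Then $L_p$ is a nonzero symmetric matrix, so there exist indices $i\neq j$ with $\ell_{ij}\neq 0$. By symmetry, $\ell_{ji}=\ell_{ij}\neq 0$, and the $2\times 2$ principal submatrix of $L_p$ on rows and columns $\{i,j\}$ takes the form
\[
N = \begin{pmatrix} 0 & \ell_{ij} \\ \ell_{ij} & 0 \end{pmatrix},
\]
whose determinant is $-\ell_{ij}^{2}\neq 0$. Hence $N$ has rank $2$, which forces $\mathrm{rk}(L_p)\geq 2$, a contradiction. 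The conclusion is that $\mathrm{rk}(L_p)$ is either $0$ or at least $2$, so in particular $\mathrm{rk}(L_p)\neq 1$. There is no real obstacle here: the entire content of the argument is absorbed into the symmetry-plus-zero-diagonal observation already made in Remark \ref{rem:Jacobian at p in Sing}, which rules out the existence of a symmetric matrix of that shape having rank exactly one.
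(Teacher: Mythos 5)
Your argument is exactly the one the paper has in mind when it says the proof is ``analogous to Proposition~\ref{prop: no component corresponding to rank 1}'': you use that $L_p$ is symmetric with zero diagonal (from Remark~\ref{rem:Jacobian at p in Sing}) to exhibit an off-diagonal $2\times 2$ principal submatrix of rank $2$, forcing $\mathrm{rk}(L_p)\geq 2$ whenever $L_p\neq 0$. This is correct and matches the paper's intended proof.
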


\begin{proposition}\label{prop: case rank 2}
Let $k\geq 4$. Let $p\in X^T$ be general and suppose $\mathrm{rk}(L_p)=2$. Then:
\[
\mathrm{codim}_{V^T} X^T\geq 2. 
\]
\begin{proof}
By assumption all the $3\times 3$ minors of $L_p$ vanish. Any such principal submatrix $L$ has the form 
\[
L = \begin{pmatrix}
0 & a & b \\
a & 0 & c \\
b & c & 0 \\
\end{pmatrix}. 
\]
Hence $\det(L) = 2abc$. We regard the point $p$ as a $(k-2)\times k$ matrix. For any choice 
of columns $i_1,i_2,i_3$, there must be two indices $i_{n}, i_{m}$ such that the $(k-2)\times (k-2)$ permanent
not involving $i_n$ and $i_m$ is zero. Since $k\geq 4$, up to permuting columns, we may assume 
that $X^T$ is inside the locus $C$ defined by the vanishing of the $(k-2)\times (k-2)$ permanent $\mathrm{perm}_{12}$ not involving columns $1,2$ and of the $(k-2)\times (k-2)$ permanent $\mathrm{perm}_{34}$ not involving columns $3,4$. 
Since each of these permanents is irreducible by Corollary \ref{cor:perm is irreducible}, and since $\mathrm{perm}_{12}$ and $\mathrm{perm}_{34}$ are linearly independent, 
$C$ is a complete intersection of codimension two in $V^T$. Hence $\mathrm{codim}_{V^T} X^T\geq \mathrm{codim}_{V^T} C = 2$.
\end{proof}
\end{proposition}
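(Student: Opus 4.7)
The plan is to extract two independent algebraic constraints on $X^T$ from the rank-$2$ hypothesis on $L_p$ and then use irreducibility of permanents to upgrade these to a complete-intersection estimate.

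First, I would exploit the principal $3\times 3$ minors of $L_p$. Since $L_p$ is symmetric with zero diagonal, the principal minor indexed by any triple of columns $\{i,j,\ell\}$ evaluates to $2\cdot\mathrm{perm}_{ij}\cdot\mathrm{perm}_{i\ell}\cdot\mathrm{perm}_{j\ell}$, where $\mathrm{perm}_{ab}$ denotes the $(k-2)\times(k-2)$ permanent of $A_p$ omitting columns $a,b$. The hypothesis $\mathrm{rk}(L_p)=2$ at a general $p\in X^T$ forces all such products to vanish identically on $X^T$.

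Next, I would promote these multiplicative vanishings to the vanishing of single factors. Since $X$ is irreducible and $X^T = \phi_{t\to 0}(X)$ is its image under a morphism, $X^T$ is irreducible. Combined with the irreducibility of each $(k-2)\times(k-2)$ permanent (Corollary \ref{cor:perm is irreducible}), for every triple $\{i,j,\ell\}$ one of $\mathrm{perm}_{ij},\mathrm{perm}_{i\ell},\mathrm{perm}_{j\ell}$ must vanish identically on $X^T$. From the triple $\{1,2,3\}$, after permuting columns by an element of $\mathfrak S_k$ (which only relabels irreducible components of $Y$ and thus does not affect the desired codimension bound), I may assume $\mathrm{perm}_{12}$ vanishes on $X^T$. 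Now, using $k\geq 4$, I would apply the same reasoning to a second triple to obtain a further vanishing permanent; by another column permutation I can arrange it to be $\mathrm{perm}_{34}$, with columns disjoint from $\{1,2\}$.

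Finally, since $\mathrm{perm}_{12}$ and $\mathrm{perm}_{34}$ are irreducible and use disjoint sets of variables, they are non-associate irreducibles in the UFD $F[V^T]$, hence form a regular sequence. So $C := V(\mathrm{perm}_{12},\mathrm{perm}_{34})$ is a complete intersection of codimension $2$ in $V^T$, and $X^T\subseteq C$ yields $\mathrm{codim}_{V^T}X^T \geq 2$. The only real subtlety, and the place where the hypothesis $k\geq 4$ is essential, lies in the combinatorial step of arranging via $\mathfrak S_k$ that two of the identically-vanishing permanents have disjoint column supports; for $k=4$ this has to be checked across the finite list of triples, exploiting that the column swaps realize every pairing of indices up to renaming.
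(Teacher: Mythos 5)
Your argument is essentially the paper's own proof: both extract from the rank-$2$ hypothesis the vanishing of the principal $3\times 3$ minors $2\,\mathrm{perm}_{ij}\,\mathrm{perm}_{i\ell}\,\mathrm{perm}_{j\ell}$ of $L_p$, use irreducibility of $X^T$ together with irreducibility of each subpermanent to pass from vanishing of the product to vanishing of a factor, and then bound $\mathrm{codim}_{V^T}X^T$ by exhibiting a codimension-two complete intersection containing $X^T$.

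One justification you give is false, though, and a second is more delicate than you suggest. Your claim that $\mathrm{perm}_{12}$ and $\mathrm{perm}_{34}$ ``use disjoint sets of variables'' only holds for $k=4$: for $k\geq 5$, $\mathrm{perm}_{12}$ is built from columns $3,\ldots,k$ of $A_p$ and $\mathrm{perm}_{34}$ from columns $1,2,5,\ldots,k$, so they share columns $5,\ldots,k$. Disjointness is not what makes them a regular sequence; what you actually need is only that they are non-associate irreducibles in the polynomial ring $F[V^T]$, which follows because they are two distinct homogeneous polynomials of the same degree (the paper invokes linear independence to the same effect). Relatedly, the ``combinatorial step'' you defer is subtler than a finite check that always succeeds: if you insist on disjoint column supports $\{1,2\}$ and $\{3,4\}$, you are asking the graph of identically vanishing pairs to contain a matching of size two, and for $k=4$ this graph can be a triangle (no matching of size two) while $L_p$ still has rank exactly two. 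The fix is the same in both places: do not aim for disjoint supports at all, just take any two of the identically vanishing permanents (at least two exist since the complement of that graph must be triangle-free, which is impossible with fewer than two vanishing pairs when $k\geq 4$); they are distinct irreducible forms of the same degree, hence non-associate, hence a regular sequence cutting out a codimension-two complete intersection containing $X^T$.
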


\begin{theorem}\label{thm: case rank 0}
Let $k\geq 6$. Let $p\in X^T$ be general and suppose $\mathrm{rk}(L_p)=0$. Then one has
\[
\mathrm{codim}_{V^T} \phi_{t\rightarrow 0}(X)\geq 6.
\]
\begin{proof}
The assumption implies that $\phi_{t\rightarrow 0}(X)$ is inside the locus $C$ defined by the vanishing 
of all $(k-2)\times (k-2)$ permanents of any $(k-2)\times k$ matrix in $V^T$.

We have two cases: 
\begin{enumerate}
\item[(i)] For a general $p\in X^T$, all the $(k-3)\times (k-3)$ permanents vanish. 
Hence $X^T$ is inside a cone over an irreducible component 
of the locus $C$, given by the vanishing of all the $(k-3)\times (k-3)$ of a $(k-3)\times k$ generic matrix. 
In this case one has $\mathrm{codim}_{V^T} X^T\geq 6$, by Proposition \ref{prop: subcases of rank 0} below for $h=3$ and $\ell=k-3$. We postpone its proof because it involves a more technical analysis. 

\item[(ii)] For a general $p\in X^T$, there exists a $(k-3)\times (k-3)$ permanent 
that does not vanish at $p$. 

We claim that in this case the inequality in Corollary \ref{cor:lower bound on codim} is strict, i.e. 
$X$ is {\it strictly} contained in the closure of the vector bundle $T^1_{Y|X^T}$ over
$X^T$. Otherwise, if equality holds, then for a general $p\in X^T$ any extension $q$ to 
a $k\times k$ matrix satisfies $q\in X$. 
Put free variables $z_{ij}$ on the first two rows of $q$. Any $(k-1)\times (k-1)$ permanent of $q$ vanishes, 
because $q\in Y$. Consider a $(k-1)\times (k-1)$ permanent $\mathrm{perm}_{k-1,k-1}$ of $q$ involving the first two rows 
consisting of $z_{ij}$ and containing a $(k-3)\times (k-3)$ nonvanishing
subpermanent; the latter exists because of the assumption on $p$. The condition $\mathrm{perm}_{k-1,k-1}(q)=0$ gives 
a linear relation among the $2\times 2$ permanents of the $2\times k$ submatrix of $q$ whose entries are the $z_{ij}$. 
However, permanents of fixed arbitrary size of a generic matrix are linearly independent, by Lemma \ref{lem:perms are l.i.}. Therefore we reached a contradiction. 

Thus $\mathrm{codim}_V X\geq \mathrm{codim}_{V^T} X^T +1$. To conclude it is enough to show
that $\mathrm{codim}_{V^T} X^T\geq 5$. This is proven in Proposition \ref{prop: subcases of rank 0},
where $h=2$ and $\ell=k-2$. 
\end{enumerate}
This concludes the proof. 
\end{proof}
\end{theorem}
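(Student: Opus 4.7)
The hypothesis $\mathrm{rk}(L_p)=0$ forces every $(k-2)\times(k-2)$ subpermanent of the $(k-2)\times k$ bottom block $A_p$ to vanish, so $X^T$ is contained in the closed locus $C\subset V^T$ cut out by these permanents. To push the codimension bound on $X^T$ up to $6$, my plan is a case split according to whether a deeper layer of permanents also vanishes generically on $X^T$.

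In case (i), every $(k-3)\times(k-3)$ subpermanent of $A_p$ also vanishes for a general $p\in X^T$. Then $X^T$ lies in the locus of $(k-2)\times k$ matrices all of whose $(k-3)\times(k-3)$ subpermanents vanish; selecting any $k-3$ rows exhibits such a point as sitting inside the permanental variety $P_{k-3,k}$. The estimate $\mathrm{codim}_{V^T} X^T \geq 6$ I would isolate as an auxiliary proposition bounding the codimension of these ``deeper'' permanental loci $P_{\ell,k}$ for $\ell$ close to $k$. The auxiliary proposition can be attacked by an inductive transcendence-degree argument in the style of Theorem \ref{codims} and Theorem \ref{thm:codimP_kn}, and its application with $\ell = k-3$ yields exactly codimension $\geq 6$.

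In case (ii), some $(k-3)\times(k-3)$ subpermanent of $A_p$ does not vanish generically on $X^T$. The crucial point is that the inclusion $X \subseteq T^1_{X^T,Y}$ from Proposition \ref{bb in our case} is \emph{strict} in this subcase. Indeed, were equality to hold, then for a general $p\in X^T$ any completion $q$ of $p$ obtained by inserting free variables $z_{1,h}, z_{2,h}$ into the first two rows would lie in $X \subseteq Y$. Choose any $(k-1)\times(k-1)$ subpermanent of $q$ that uses both $z$-rows and whose Laplace expansion along these two rows features a nonzero $(k-3)\times(k-3)$ subpermanent of $A_p$ as one of its coefficients, which is possible by the hypothesis. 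The vanishing of this $(k-1)\times(k-1)$ permanent then translates into a nontrivial linear combination of the $2\times 2$ permanents of the $2\times k$ block of $z_{i,h}$'s, contradicting Lemma \ref{lem:perms are l.i.}. Therefore $\mathrm{codim}_V X \geq \mathrm{codim}_{V^T} X^T + 1$, and the same auxiliary proposition applied with $\ell = k-2$ gives $\mathrm{codim}_{V^T} X^T \geq 5$, which closes this case.

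The main obstacle is the auxiliary codimension estimate for the permanental loci $P_{\ell,k}$ with $\ell$ close to $k$: Theorem \ref{codims} handles only $\ell \leq 5$, so a separate argument is required for large $k$. I expect this will require an induction on $k-\ell$ combined with the type-by-type component analysis of Section \ref{sec:torus action}, ruling out irreducible components of insufficient codimension at each stage. This bookkeeping is the delicate and technical step; once the auxiliary proposition is in hand, the dichotomy above and the strict-containment argument in case (ii) combine cleanly to give the theorem.
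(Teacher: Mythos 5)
Your proposal follows essentially the same approach as the paper's proof: the same reduction to the vanishing locus of $(k-2)\times(k-2)$ permanents, the same dichotomy on whether the $(k-3)\times(k-3)$ permanents also vanish generically, the same strict-containment argument in case (ii) via the linear-independence of permanents (Lemma~\ref{lem:perms are l.i.}), and the same deferral of the codimension estimate to an auxiliary proposition (which is exactly Proposition~\ref{prop: subcases of rank 0} in the paper, applied with $(\ell,h)=(k-3,3)$ and $(k-2,2)$). The one place you are vague — the proof of that auxiliary proposition — is also where the paper itself admits the technical weight lies, and your guess (a torus-action/Jacobian-rank analysis with an induction) is the right idea, though the paper's induction is on $\ell$ with $h$ fixed rather than on $k-\ell$; the base case $\ell=3$ is then covered by Theorem~\ref{codims}.
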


The previous proof relies on the following result, which in turn improves the easier lower bound of Proposition \ref{lower-upperboundscodim}. 

\begin{proposition}\label{prop: subcases of rank 0}
Let $M$ be a generic $\ell\times (\ell+h)$ complex matrix for $h\geq 1$ and let $V=\CC^{\ell\times (\ell+h)}$. 
Let $P_{\ell,\ell+h}$ be the variety defined by all the $\ell\times \ell$ permanents of $M$. Then 
$\mathrm{codim}_V P_{\ell,\ell+h}\geq h+3$ for $\ell\geq 3$.
\begin{proof}
The proof is by induction on $\ell$, with $\ell=3$ as base case, which is implied by Theorem \ref{codims}. Let $Y=P_{\ell,\ell+h}$. We fix the $T=\CC^{*}$-action scaling by $t$ the first row of $M$. Hence $V^T$ is a linear subspace
of $(\ell-1)\times (\ell+h)$ matrices. Let $X$ be an irreducible component of $Y$. As before, we have a surjective map $\phi_{t\rightarrow 0}: X\rightarrow X^T\subset V^T$. The Jacobian $J(Y)$ calculated at a point $p\in V^T$ has the form 
\[
J(Y)_p=\begin{pmatrix}
N_p \\
{\bf 0} \\
\end{pmatrix},
\]
where the entries of $N_p$ are $(\ell-1)\times (\ell-1)$ permanents of the $(\ell-1)\times (\ell+h)$ matrix $p$. Here 
the columns of $J(Y)_p$ are indexed by subsets of $\ell$ elements of the $\ell+h$ column set of $M$. 
The rows of $N_p$ correspond to the $\ell+h$ variables on the first row of $M$. From the description of the Jacobian, as in Corollary \ref{cor:lower bound on codim} and using Proposition \ref{bb in our case}, we find that $\mathrm{codim}_{V} X\geq \mathrm{codim}_{V^T} X^T + \mathrm{rk}(N_p)$. 

We claim that $\mathrm{rk}(N_p)\neq 1,\ldots, h$. Indeed, assume that there is a nonzero entry in $N_p$. 
This corresponds to a nonvanishing $(\ell-1)\times (\ell-1)$ permanent $\mathrm{perm}_{\ell-1,\ell-1}$. 
Up to permuting columns, we may assume that  $\mathrm{perm}_{\ell-1,\ell-1}$ involves the first $\ell-1$ columns
and set $\mathrm{perm}_{\ell-1,\ell-1}(p)=a\neq 0$. 

Consider the submatrix $N$ of $N_p$ with $\ell+h-(\ell-1) = h+1$ columns, each corresponding to a subset $\ell$ columns 
of $M$. Thus 
\[
N=\bordermatrix{ & \lbrace 1,\ldots, \ell\rbrace & \lbrace 1,\cdots, \ell-1, \ell+1\rbrace & \ldots & \lbrace 1,\ldots, \ell-1,\ell+h\rbrace \cr
x_{1,\ell} & a & 0 & \cdots & 0\cr
x_{1,\ell+1} & 0 & a & \cdots & 0 \cr 
\vdots & 0 & 0 & \ddots & 0 \cr
x_{1,\ell+h} & 0 & 0 & \cdots & a}, \qquad
\]
where the variables $x_{1,\ell+j}$ are the last $h+1$ variables in the first row of $M$. The matrix $N$ 
is a diagonal $(h+1)\times (h+1)$ matrix with the evaluated permanent $\mathrm{perm}_{\ell-1,\ell-1}(p)=a$ on the main diagonal. Hence $\det(N)\neq 0$ and so either $\mathrm{rk}(N_p)=0$ or $h+1\leq \mathrm{rk}(N_p)\leq \ell+h$. 

We shall be done if we prove that $\mathrm{rk}(N_p)\geq h+3$. To this aim, we have to deal with the cases: 
\begin{enumerate}
\item[(i)] $\mathrm{rk}(N_p)=0$;

\item[(ii)] $\mathrm{rk}(N_p)=h+1$;
\item[(iii)] $\mathrm{rk}(N_p)=h+2$. 
\end{enumerate}

Suppose (i) holds true. Then all the $(\ell-1)\times (\ell-1)$ permanents of the $(\ell-1)\times (\ell+h)$ matrix $p$ vanish. This implies that $X^T$ is inside an irreducible component of $P_{\ell-1,\ell-1+h}$. 
So, by induction on $\ell\geq 3$, we have $\mathrm{codim}_{V^T} X^T\geq h+3$. 

Suppose (ii) holds true. Then it is enough to find two $(h+2)\times (h+2)$ minors, without
common factors to prove that $\mathrm{codim}_{V^T} X^T\geq 2$. 
Consider the following $(h+2)\times (h+2)$ submatrix of $N_p$:
\begin{small}
\[
S=\bordermatrix{ & \lbrace 1,\ldots, \ell\rbrace & \lbrace 1,\cdots, \ell-1, \ell+1\rbrace & \ldots & \lbrace 1,\ldots, \ell-1,\ell+h\rbrace & \lbrace 1,\ldots,\ell-2,\ell,\ell+1\rbrace \cr
x_{1,\ell} & a & 0 & \cdots & 0 & b_2\cr
x_{1,\ell+1} & 0 & a & \cdots & 0 & b_1\cr 
\vdots & \vdots & \vdots & \ddots & \vdots  & \vdots \cr
x_{1,\ell+h} & 0 & 0 & \cdots & a & 0 \cr 
x_{1,\ell-1} &  b_1 & b_2 & \cdots & b_{\ell+1} & 0}, \qquad
\]
\end{small}
where $a$ is the permanent on columns $\lbrace 1,\ldots, \ell-1\rbrace$ of $p$, $b_1$
is the permanent on columns $\lbrace 1,\ldots, \ell-2,\ell\rbrace$ of $p$, and $b_2$
is the permanent on columns $\lbrace 1,\ldots, \ell-2, \ell+1\rbrace$ of $p$. 
Note that $\det(S) = -2a^{h}b_1b_2$. Since $\det(S)=0$ and $a\neq 0$, we have either $b_1=0$ or $b_2=0$. Picking a different minor from the one above, we find another irreducible vanishing permanent. 
Since $X^T$ must be contained in the vanishing of two irreducible 
and linearly independent permanents, we find that $\mathrm{codim}_{V^T} X^T\geq 2$. 

To conclude in case (iii), it is enough to find a point $p\in V^T$ and a $(h+3)\times (h+3)$ minor of $N_p$ that is nonzero. Indeed, then $\mathrm{codim}_{V^T} \phi_{t\rightarrow 0}(X)\geq 1$. 
Let $q\in V^T$ be a matrix of the form 
\[
q=\bordermatrix{ & 1 & 2 & \cdots & \ell-1 & \ell & \ell+1 & \ell+2 & \cdots \cr
& 1 & 0 & \cdots & 0 & 0 & 0 & 0 & 0 \cr
& 0 & 1 & \cdots & 0 & 0 & 0 & 0 & 0 \cr 
& 0 & 0 & \ddots & 0 & a & b & 0 & 0 \cr
& 0 & 0 & 0 & 1 & d & c & 1 & 0 }.\qquad
\]
Then consider the following $(h+3)\times (h+3)$ minor of $N_q$, the submatrix of $J(Y)_q$: 
\begin{tiny}
\[
Q=\bordermatrix{ & \lbrace 1,\ldots, \ell\rbrace & \lbrace 1,\cdots, \ell-1, \ell+1\rbrace & \ldots & \lbrace 1,\ldots, \ell-1,\ell+h\rbrace & \lbrace 1,\ldots,\ell-2,\ell,\ell+1\rbrace & \lbrace 1,\ldots,\ell-2,\ell,\ell+2\rbrace \cr
x_{1,\ell-2} & a & d & \cdots & 0 & ac+bd & a\cr
x_{1,\ell-1} & d & c & \cdots & 1 & 0 & 1 \cr 
x_{1,\ell} & 1 & 0 & \cdots & 0  &  c & 0 \cr
x_{1,\ell+1} & 0 & 1 & \cdots & 0 & d & 0 \cr 
x_{1,\ell+2} & 0 & 0  & 1 & 0 & 0 & d \cr
\vdots &  \vdots & \vdots & \vdots & \vdots & \vdots & \vdots \cr
x_{1,\ell+h} & 0 & 0 & 0 & 1 & 0 & 0}. 
\]
\end{tiny}
The lower-left $(h+1)\times (h+1)$ corner is the identity matrix. The matrix $Q$ is divided into
two linearly independent blocks: an $(h-2)\times (h-2)$ identity matrix (inside 
the lower-left $(h+1)\times (h+1)$ identity corner) and the following $5\times 5$ matrix 
\[
Q' = \begin{pmatrix}
a & d & 0 & ac+bd & a \\
d & c & 1 & 0 & 1 \\
1 & 0 & 0 & c & 0 \\
0 & 1 & 0 & d & 0 \\
0 & 0 & 1 & 0 & d\\ 
\end{pmatrix}.
\]
Hence $\mathrm{rk}(Q) = h-2 + \mathrm{rk}(Q')$. Now $\det(Q') = d(d^2-db+2ac-d+b)$, which is nonzero for generic choices of $a,b,c,d$. For such 
choices, $\mathrm{rk}(Q')=5$ and the proof is complete. 
\end{proof}
\end{proposition}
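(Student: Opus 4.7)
My plan is to induct on $\ell \geq 3$, running the same torus-action machinery introduced earlier in this section but one dimension lower. The base case $\ell = 3$ is immediate from Theorem \ref{codims}: the variety $P_{3, 3+h}$ has codimension exactly $3 + h$, which is at least $h + 3$. For the inductive step, fix $\ell \geq 4$ and an irreducible component $X \subset Y = P_{\ell, \ell+h}$, and let $T = \CC^*$ act on $V = \CC^{\ell \times (\ell+h)}$ by scaling the first row. Then $V^T \cong \CC^{(\ell-1)\times(\ell+h)}$ and $\phi_{t \to 0}: X \to X^T \subset V^T$. The Jacobian of the defining permanents at a point $p \in V^T$ is concentrated in the $\ell + h$ rows indexed by the first-row variables, forming a matrix $N_p$ whose $(j, S)$-entry is (up to sign) the $(\ell-1)\times(\ell-1)$ permanent of $p$ on columns $S \setminus \{j\}$ when $j \in S$ and zero otherwise. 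The Bia\l{}ynicki-Birula-style bound from Proposition \ref{bb in our case}, packaged as in Corollary \ref{cor:lower bound on codim}, then yields $\mathrm{codim}_V X \geq \mathrm{codim}_{V^T} X^T + \mathrm{rk}(N_p)$ for generic $p \in X^T$.

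The key structural observation is a rank dichotomy: if any single $(\ell-1)\times(\ell-1)$ permanent $a$ of $p$ is nonzero, supported on some $(\ell-1)$-subset $T_0$ of columns, then the $(h+1)$ columns of $N_p$ indexed by $T_0 \cup \{c\}$ as $c$ ranges over the remaining columns cut out a diagonal $(h+1) \times (h+1)$ block with $a$ on the diagonal. Hence $\mathrm{rk}(N_p) \in \{0\} \cup \{h+1, h+2, \ldots\}$, and the argument splits into four cases. If $\mathrm{rk}(N_p) \geq h+3$, we are done immediately. If $\mathrm{rk}(N_p) = 0$, then $X^T \subset P_{\ell-1, \ell+h}$, and the inductive hypothesis applied with parameters $(\ell-1, h+1)$ gives $\mathrm{codim}_{V^T} X^T \geq h+4$. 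If $\mathrm{rk}(N_p) = h+1$, I would enlarge the diagonal block by one row and column and expand the resulting vanishing $(h+2)$-minor so that its vanishing factors through two linearly independent irreducible permanents of $p$ (irreducibility supplied by Corollary \ref{cor:perm is irreducible}), forcing $\mathrm{codim}_{V^T} X^T \geq 2$. Finally, if $\mathrm{rk}(N_p) = h+2$, I would exhibit a single concrete point $q \in V^T$ for which some $(h+3)\times(h+3)$ minor of $N_q$ is nonzero, showing $X^T$ cannot fill $V^T$ and hence $\mathrm{codim}_{V^T} X^T \geq 1$.

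The main obstacle is this last case. One needs a test matrix $q$ sparse enough that the relevant $(h+3)\times(h+3)$ minor of $N_q$ decomposes as an identity block of size $h-2$ together with a parameter-dependent block of size independent of $\ell$ and $h$, whose determinant can be certified nonzero by direct expansion on a handful of free parameters placed in the bottom row or two of $q$. Designing this sparsity pattern — so that the cascade of $(\ell-1)\times(\ell-1)$ permanents of $q$ interacts productively and lands in a fixed-size subdeterminant — is the delicate step that makes the rank estimate uniform in $\ell, h$. Once such a test point is in hand, the four-case analysis closes the induction.
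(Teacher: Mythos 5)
Your proposal reproduces the paper's strategy essentially verbatim: induct on $\ell$ with $\ell=3$ settled by Theorem~\ref{codims}, use the $\CC^*$-action scaling the first row, identify the Jacobian block $N_p$ of evaluated $(\ell-1)\times(\ell-1)$ permanents, observe the rank dichotomy $\operatorname{rk}(N_p)\in\{0\}\cup\{h+1,h+2,\dots\}$ coming from the diagonal $(h+1)\times(h+1)$ block, and then split into four cases. Cases $\operatorname{rk}\geq h+3$, $\operatorname{rk}=0$, and $\operatorname{rk}=h+1$ are handled as in the paper (your rank-$0$ step applies the inductive hypothesis with $(\ell-1,h+1)$, which gives $h+4\geq h+3$; the paper writes $P_{\ell-1,\ell-1+h}$, but the column count of $p$ is $\ell+h$ so your parameter count is in fact the correct one). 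Your case $(ii)$ sketch is phrased slightly loosely --- one vanishing $(h+2)$-minor gives only a \emph{disjunction} $b_1=0$ or $b_2=0$, and one must pick a second minor to produce a second independent vanishing permanent --- but the intended argument is the paper's.

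The genuine gap is case $(iii)$, $\operatorname{rk}(N_p)=h+2$, which you explicitly defer: you state that one should exhibit a sparse test point $q\in V^T$ making some $(h+3)\times(h+3)$ minor of $N_q$ nonzero, and you correctly identify this as the delicate step, but you do not produce the point or the minor. This is precisely where the paper does real work: it constructs $q$ with free parameters $a,b,c,d$ concentrated in the last two rows and carefully chosen column support, selects $h+3$ columns of $N_q$ (the $h+1$ ``diagonal'' columns plus $\{1,\dots,\ell-2,\ell,\ell+1\}$ and $\{1,\dots,\ell-2,\ell,\ell+2\}$) and $h+3$ rows $x_{1,\ell-2},\dots,x_{1,\ell+h}$, and shows the resulting minor decomposes as an identity block of size $h-2$ times a fixed $5\times 5$ block $Q'$ with $\det(Q')=d(d^2-db+2ac-d+b)\neq 0$ generically. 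Without this explicit construction and verification your proof is an outline rather than a complete argument, since nothing in the setup guarantees a priori that such a nonvanishing $(h+3)$-minor exists for some $p\in V^T$, nor that it can be engineered uniformly in $\ell$ and $h$.
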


We are ready to improve von zur Gathen's Theorem \ref{vzgthm}. 

\begin{theorem}\label{codim of sing loc}
Let $k\geq 6$. The singular locus $\mathrm{Sing}(P) = \lbrace \mathrm{prk}(M)\leq k-2\rbrace$ has codimension between $6$ and $2k$. 
\begin{proof}
By Corollary \ref{cor:lower bound on codim}, it is enough to show that for any irreducible component $X$ of $Y=\mathrm{Sing}(P)$, we have 
\[
\mathrm{codim}_{V^T}X^T+2\mathrm{rk}(N_p)\geq 6. 
\]
By Corollary \ref{cor: if rk at least 3, done} and Corollary \ref{cor:rank of M is different from 1}, we have 
two cases to deal with: either $\mathrm{rk}(N_p)=2$ or $\mathrm{rk}(N_p)=0$. The first case is achieved by Proposition \ref{prop: case rank 2}. The second case is implied by Theorem \ref{thm: case rank 0}. 
\end{proof}
\end{theorem}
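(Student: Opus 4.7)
The plan is to assemble the lower bound $6$ from the tools already developed in \S\ref{subsec:singlocus}, while the upper bound $2k$ is already the content of von zur Gathen's Theorem \ref{vzgthm}. Fix the torus $T=\CC^*$ acting by scaling the first two rows of $M$, take any irreducible component $X$ of $Y=\mathrm{Sing}(P)$, and apply Proposition \ref{bb in our case} together with Corollary \ref{cor:lower bound on codim} to obtain the master inequality
\[
\mathrm{codim}_V X \;\geq\; \mathrm{codim}_{V^T} X^T + 2\,\mathrm{rk}(L_p),
\]
valid for a general $p\in X^T$. The task is therefore to verify that the right-hand side is at least $6$ in every possible case for $\mathrm{rk}(L_p)$.

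Next, I would carry out the case analysis on $\mathrm{rk}(L_p)$. Lemma \ref{cor:rank of M is different from 1} rules out $\mathrm{rk}(L_p)=1$, so only the values $0,2,3,4,\ldots$ remain. When $\mathrm{rk}(L_p)\geq 3$, Corollary \ref{cor: if rk at least 3, done} directly yields $\mathrm{codim}_V X \geq 2\cdot 3 = 6$. When $\mathrm{rk}(L_p)=2$, Proposition \ref{prop: case rank 2} supplies $\mathrm{codim}_{V^T} X^T \geq 2$, and combining with the $2\,\mathrm{rk}(L_p) = 4$ contribution in the master inequality gives $\mathrm{codim}_V X \geq 6$. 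Finally, when $\mathrm{rk}(L_p)=0$ the torus contribution vanishes entirely, so the burden is shifted to $X^T$; this is exactly the content of Theorem \ref{thm: case rank 0}, which gives $\mathrm{codim}_{V^T} X^T \geq 6$ (assuming $k\geq 6$) and hence $\mathrm{codim}_V X \geq 6$.

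Putting the three cases together exhausts all possibilities, and each case delivers the bound $\mathrm{codim}_V X \geq 6$. Taking the infimum over all irreducible components $X$ of $\mathrm{Sing}(P)$ proves the theorem. The genuine difficulty in this argument lies not in the present assembly, which is purely formal, but in the substantial analysis already carried out in Theorem \ref{thm: case rank 0} and its supporting Proposition \ref{prop: subcases of rank 0}: there one must control the zero-rank stratum by a delicate further case split (all $(k-3)\times(k-3)$ permanents vanishing versus one non-vanishing, together with an explicit rank computation on a $5\times 5$ minor of $N_q$). The hypothesis $k\geq 6$ enters precisely through Proposition \ref{prop: subcases of rank 0}, which requires enough columns to exhibit the independent vanishing permanents and the explicit $(h+3)\times(h+3)$ nonvanishing minor.
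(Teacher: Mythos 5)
Your proposal reproduces the paper's argument essentially verbatim: the master inequality from Corollary \ref{cor:lower bound on codim}, the exclusion of $\mathrm{rk}(L_p)=1$ via Lemma \ref{cor:rank of M is different from 1}, the disposal of $\mathrm{rk}(L_p)\geq 3$ by Corollary \ref{cor: if rk at least 3, done}, and the two remaining cases handled by Proposition \ref{prop: case rank 2} and Theorem \ref{thm: case rank 0}, with the upper bound supplied by von zur Gathen. You also consistently use the notation $L_p$ for the symmetric block in the Jacobian, which is the notation used throughout \S\ref{subsec:singlocus} and is more consistent than the $N_p$ appearing in the paper's own statement of this proof.
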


\begin{definition}
For any subset $R\subset [k]$ of rows, let $J_R$ be the ideal generated by all the $|R|\times |R|$ permanents of the $|R|\times k$ submatrix $M_{R,[k]}$, i.e. 
the submatrix of $M$ whose rows are indexed by $R$. For any subset of columns $C\subset [k]$, one similarly defines $J_C$. 
\end{definition}

We omit the proof of the following straightforward result.

\begin{lemma}
For any partition of rows $R_1\sqcup R_2 = [k]$ or of columns $C_1\sqcup C_2 = [k]$, we have the inclusions of ideals 
\[
I(\mathrm{Sing}(P)) \subset J_{R_1} + J_{R_2} \mbox{and } I(\mathrm{Sing}(P)) \subset J_{C_1} + J_{C_2}. 
\]
\end{lemma}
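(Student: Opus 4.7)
The plan is to exhibit every generator of $I(\mathrm{Sing}(P))$, namely every $(k-1)\times(k-1)$ permanent of $M$, inside $J_{R_1}+J_{R_2}$. Such a permanent has the form $\mathrm{perm}(N)$ with $N=M_{[k]\setminus\{r\},[k]\setminus\{c\}}$ for some row index $r$ and column index $c$. Since $[k]=R_1\sqcup R_2$, the index $r$ lies in exactly one part; without loss of generality I assume $r\in R_1$, so that all of $R_2$ is still contained in the row set of $N$.

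The key ingredient will be the Laplace expansion for permanents, the sign-free analogue of the Laplace expansion for determinants: for any row partition $A\sqcup B$ of the rows of an $m\times m$ matrix $N'$, one has
\[
\mathrm{perm}(N')=\sum_{S\subset[m],\,|S|=|A|}\mathrm{perm}(N'_{A,S})\cdot\mathrm{perm}(N'_{B,[m]\setminus S}),
\]
which is immediate upon grouping the permutations of $[m]$ according to the image $S=\sigma(A)$. I apply this identity to $N$ along the row partition $(R_1\setminus\{r\})\sqcup R_2$ of its row set, which has size $(|R_1|-1)+|R_2|=k-1$.

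In the resulting expansion, each summand contains a factor of the form $\mathrm{perm}(M_{R_2,T})$ for some $T\subset[k]\setminus\{c\}$ with $|T|=|R_2|$. By definition such a factor is a generator of $J_{R_2}$, so $\mathrm{perm}(N)\in J_{R_2}\subset J_{R_1}+J_{R_2}$. The symmetric case $r\in R_2$ is handled using the row partition $R_1\sqcup(R_2\setminus\{r\})$, which places $\mathrm{perm}(N)$ in $J_{R_1}$ by the same argument. The column inclusion is proved entirely symmetrically, via the column Laplace expansion for the permanent.

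There is essentially no hard step; this is why the authors describe the result as straightforward. The only point requiring attention is the degenerate cases $|R_i|=1$, in which the corresponding part in the Laplace expansion is vacuous (an empty product equal to $1$) and the conclusion becomes even more direct, since $\mathrm{perm}(N)$ itself is an $|R_2|\times|R_2|$ permanent of $M_{R_2,[k]}$.
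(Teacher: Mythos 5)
Your argument is correct, and it is the natural proof via the sign-free Laplace expansion for permanents along a row (resp.\ column) partition; the paper explicitly omits the proof, describing the lemma as straightforward, and your write-up supplies exactly the intended reasoning. One small remark worth keeping is your observation that different generators of $I(\mathrm{Sing}(P))$ land in different pieces $J_{R_i}$ depending on which part contains the omitted row index, so the inclusion really does require the sum $J_{R_1}+J_{R_2}$ rather than either ideal alone.
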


\begin{corollary}
Suppose Conjecture \ref{conj: ci} holds. Then any irreducible component $C$ of $\mathrm{Sing}(P)$ whose prime ideal $I(C)$ contains $J_{R_1} + J_{R_2}$ for a partition $R_1\sqcup R_2 = [k]$ has codimension $\geq 2k$. 
\begin{proof}
By Theorem \ref{thm:codimP_kn}, the assumption implies that the codimension of $J_{R_1} + J_{R_2}$ is $2k$. Since $I(C)$ contains $J_{R_1} + J_{R_2}$, the statement follows. 
\end{proof}
\end{corollary}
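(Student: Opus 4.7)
The plan is a quick codimension bookkeeping. Since $I(C)\supset J_{R_1}+J_{R_2}$, we have the containment $C\subset V(J_{R_1}+J_{R_2})\subset V=\CC^{k\times k}$, so the statement reduces to showing
\[
\mathrm{codim}_V\bigl(V(J_{R_1}+J_{R_2})\bigr)\geq 2k.
\]
The key observation is that $J_{R_1}$ and $J_{R_2}$ live in disjoint sets of variables, which lets us split the codimension into two independent pieces and apply Theorem \ref{thm:codimP_kn} to each factor.

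First, set $r_a=|R_a|$. By definition, the generators of $J_{R_a}$ are the $r_a\times r_a$ permanents of the row-submatrix $M_{R_a,[k]}$, so they involve only the variables $\{x_{i,j}:i\in R_a,\; j\in[k]\}$. Because $R_1\cap R_2=\emptyset$, the coordinate ring of $V$ factors as $S=S_1\otimes_\CC S_2$ with $J_{R_a}\subset S_a$, and the zero locus $V(J_{R_a}\cdot S)\subset V$ is a cylinder over the permanental variety $P_{r_a,k}\subset \CC^{r_a\times k}$ in the remaining coordinate directions.

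Second, assuming Conjecture \ref{conj: ci}, Theorem \ref{thm:codimP_kn} gives $\mathrm{codim}_{\CC^{r_a\times k}}P_{r_a,k}=k$, hence $\mathrm{codim}_V V(J_{R_a}\cdot S)=k$ for each $a\in\{1,2\}$. Finally, the tensor splitting
\[
S/(J_{R_1}+J_{R_2})\;\cong\; (S_1/J_{R_1})\otimes_\CC(S_2/J_{R_2})
\]
yields $\dim S/(J_{R_1}+J_{R_2})=(r_1k-k)+(r_2k-k)=k^2-2k$, so $\mathrm{codim}_V V(J_{R_1}+J_{R_2})=2k$, as required. There is no substantive obstacle: the entire weight of the argument sits in the conjecture and in Theorem \ref{thm:codimP_kn}, and the only ingredient one must check is the additivity of codimension on disjoint variable sets, which is immediate from the tensor product description of the quotient.
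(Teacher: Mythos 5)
Your argument is correct and follows the same route as the paper's proof: both apply Theorem \ref{thm:codimP_kn} (via Conjecture \ref{conj: ci}) to $P_{r_1,k}$ and $P_{r_2,k}$ and use the additivity of codimension coming from the disjointness of the variable sets of $J_{R_1}$ and $J_{R_2}$. The paper leaves this additivity implicit, while you make it explicit via the tensor-product splitting of the coordinate ring; this is a helpful elaboration rather than a different proof.
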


\begin{conj}\label{radequality}
We have the following equality of radical ideals: 
\begin{equation}\label{eqradequality}
\mathrm{rad}(I(\mathrm{Sing}(P)) = \bigcap_{(S_1,S_2)\in \Pi} \mathrm{rad}(J_{S_1}+J_{S_2}),
\end{equation}
where $\Pi$ is the set of partitions $(S_1, S_2)$ of the $k$ rows or the $k$ columns. 
\end{conj}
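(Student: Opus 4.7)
The plan is to establish the inclusions in \eqref{eqradequality} separately. The containment $\mathrm{rad}(I(\mathrm{Sing}(P))) \subset \bigcap_{(S_1,S_2)\in\Pi} \mathrm{rad}(J_{S_1}+J_{S_2})$ is immediate from the lemma preceding the conjecture: the inclusion $I(\mathrm{Sing}(P)) \subset J_{S_1}+J_{S_2}$ passes to radicals, and intersecting over all partitions gives the result.

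For the reverse inclusion I would pass to geometry and reduce to the set-theoretic claim $\mathrm{Sing}(P) \subset \bigcup_{(S_1,S_2)\in\Pi} V(J_{S_1}+J_{S_2})$, which in turn reduces to showing that every irreducible component $X$ of $\mathrm{Sing}(P)$ is contained in some single $V(J_{R_1}+J_{R_2})$. For this I extend the torus machinery of \S\ref{sec:torus action}: for any row partition $R_1 \sqcup R_2 = [k]$, let $T_{R_1} \cong \CC^*$ act by scaling the rows in $R_1$, with fixed locus $V^{T_{R_1}}$ consisting of matrices whose $R_1$-rows vanish. Since $V(J_{R_1}+J_{R_2})$ is stable under both $T_{R_1}$ and $T_{R_2}$, the containment $X \subset V(J_{R_1}+J_{R_2})$ is equivalent to two vanishing conditions: (a) the $|R_1|$-maximal permanents of $M_{R_1,[k]}$ vanish on $X^{T_{R_2}}$, and (b) the $|R_2|$-maximal permanents of $M_{R_2,[k]}$ vanish on $X^{T_{R_1}}$ (the complementary family of permanents vanishing trivially at the fixed locus). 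Condition (b) is precisely the generalisation of the vanishing $L_p = 0$ in Corollary \ref{cor:rank of tangent bundle}, extended from the case $|R_1|=2$ to arbitrary row subsets; the same analysis of Jacobians and Bia\l{}ynicki-Birula cells from Proposition \ref{bb in our case} applies verbatim.

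The heart of the proof -- and in my view the main obstacle -- is the following claim: \emph{for every irreducible component $X$ of $\mathrm{Sing}(P)$, there exists at least one row- or column-partition $(R_1,R_2)$ for which both conditions (a) and (b) hold}. This is genuinely delicate. For the standard choice $R_1=\{1,2\}$ considered in \S\ref{subsec:singlocus}, Lemma \ref{cor:rank of M is different from 1} tells us that $\mathrm{rk}(L_p)\in\{0,2,3,\ldots\}$ at a general $p\in X^{T}$, and only the value $0$ directly produces the desired block structure; the positive-rank cases (isolated, for example, in the proof of Theorem \ref{codim of sing loc}) do not a priori fit any partition. My strategy combines two ingredients: (i) an induction on $k$, applying the conjecture to the smaller permanental problems arising inside either block $M_{R_i,[k]}$ once a preliminary ``partial" partition has been chosen; and (ii) a combinatorial argument varying the partition $(R_1,R_2)$ across all non-trivial row and column subdivisions, in the spirit of the reduction used for Theorem \ref{thm:codimP_kn}, to guarantee that for every component at least one choice of partition forces the rank-zero conditions (a) and (b).

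For small $k$ the equality \eqref{eqradequality} should be verifiable directly through \texttt{Macaulay2}, using techniques analogous to those in \S\ref{subsec45} and \S\ref{subsec56}; this would supply base cases for the induction and likely guide the combinatorial step. The most significant technical hurdle I anticipate lies in controlling the Kirkup-type components of \S\ref{subsec:kirkup ideal}, which interpolate in a subtle way between the pure block varieties $V(J_{R_1}+J_{R_2})$ and which, in the $k\times (k+1)$ setting, already correspond to the rank-one loci $\mathcal X_1$; establishing that each such component is nonetheless compatible with \emph{some} partition is expected to be the crucial and most delicate step in the whole argument.
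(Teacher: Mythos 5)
This statement is a \emph{Conjecture}, not a theorem: the paper gives no proof, and the authors explicitly remark after it that the equality has been checked computationally only for $k=3$ and is unknown already for $k=4$. So there is no paper proof to compare against. Your handling of the easy inclusion is correct and is exactly what the paper establishes: the preceding Lemma gives $I(\mathrm{Sing}(P))\subset J_{S_1}+J_{S_2}$ for each partition, and passing to radicals and intersecting gives the forward containment. Your reduction of the reverse inclusion to conditions (a) and (b) is also logically sound, since the generators of $J_{R_i}$ depend only on the $R_i$-rows, hence their vanishing on $X$ is equivalent to their vanishing on the fixed locus of the torus scaling the complementary rows.

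The gap is the central claim --- that every irreducible component of $\mathrm{Sing}(P)$ is contained in some single $V(J_{R_1}+J_{R_2})$ --- which is precisely the open content of the conjecture, and your sketch does not establish it. Ingredient (i) has no workable inductive hypothesis: the conjecture is specifically about $P_{k-1,k}$ as the singular locus of a square permanental hypersurface, whereas the smaller varieties $P_{r,n}$ that would appear in a recursion are not of that form, so there is nothing to which the inductive step could appeal. Ingredient (ii) is a heuristic with no concrete argument, and your own discussion of the positive-rank cases ($\mathrm{rk}(L_p)\neq 0$ for a general fixed point, \eg the Kirkup-type components) exhibits exactly the obstruction: nothing you say forces such a component to lie in any single product $V(J_{R_1}+J_{R_2})$, and it is precisely this possibility that keeps the conjecture open. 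In short, the proposal correctly isolates the difficulty but does not overcome it; given that the statement is an open conjecture in the paper, this outcome is expected, but the proposal is not a proof.
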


\begin{remark}
If both Conjectures \ref{radequality} and \ref{conj: ci} were true for each $k$, the codimension of $\mathrm{Sing}(P)$ would be $2k$, i.e. the upper bound in von zur Gathen's Theorem \ref{vzgthm} would be sharp for each $k$. Equality \eqref{eqradequality} has been computationally checked for $k=3$ in \texttt{Macaulay2}. We do not know whether it is true even for $k=4$. 
\end{remark}

\section{Code}\label{codes}
The majority of the following code, which simplifies that in an earlier version of this paper, was provided by an anonymous reviewer, to whom we are grateful.
\begin{mybox}
{\color{blue}
\begin{tiny}
\begin{verbatim}
K = QQ;
-- k x (k+1) matrices 
k = 4 -- or k = 5 
R = K[x_(1,1)..x_(k,k+1)];
M = matrix for i in 1..k list for j in 1..k+1 list x_(i,j);
P = permanents(k,M);
B1 = diff(matrix{{x_(1,1)..x_(1,k+1)}}, transpose gens P);
v = flatten entries transpose M_{1..k}^{1..k-1};

-- random A (for k = 4)
A = random(K^(k-1),K^(#v));

-- special A (for k = 5)
A = matrix {{3, 3, 2, 1, -1, 0, -3, 3, 2, -3, 2, 0, -3, 2, 3, -2, 2, 2, -3, -3},
{-2, -2, -1, 1, -1, 0, -2, -2, -1, -3, 2, -2, -1, 3, -2, -2, 2, -1, -1, -1},
{-2, -2, 1, 2, 3, 0, 0, -3, 2, 2, -3, -3, -1, 2, -3, 2, -2, 3, -2, 2},
{-3, 0, -3, -1, 1, 2, -1, 2, -3, 2, 1, 0, -3, -1, -1, -3, -2, 3, -1, -3}};

F = first entries (matrix{{x_(2,1)..x_(k,1)}}*A);
L = apply(#v, i-> v#i => F#i);
BB = sub(B1, L);


-- k = 4 case
P = det(BB); 
S = K[x_(2,1),x_(3,1),x_(4,1)];
PP = sub(P,S);
Sing = ideal diff(vars S,PP);
time codim Sing

use R
J = time minors(4,BB);
gbTrace=1
time codim J

-- k = 5 case 
J = time minors(3,BB);
gbTrace=1
time codim J
\end{verbatim}
\end{tiny}
}
\end{mybox}


\begin{thebibliography}{99}


\bibitem{AZ} M. Aigner and G. Ziegler, {\it Proofs from The BOOK}. Sixth edition. Including illustrations by Karl H. Hofmann. Springer, Berlin, 2018. 

\bibitem{AT89} N. Alon and M. Tarsi, {\it A nowhere-zero point in linear mappings}, Combinatorica {\bf 9}(4):393--395, 1989. 

\bibitem{BB89} R. B. Bapat and M. I. Beg, {\it Order statistics for nonidentically distributed variables and permanents}, Sankhy\={a}
: The Indian Journal of Statistics, Series A (1961-2002), {\bf 51}(1):79--93, 1989.


\bibitem{Carrell} A. Bia\l{}ynicki-Birula, J. B. Carrell, W. M. McGovern, 
{\it Algebraic quotients. Torus actions and cohomology. The adjoint representation and the adjoint action.}
Encyclopaedia Math. Sci., Vol. {\bf 131} Invariant Theory Algebr. Transform. Groups, II Springer-Verlag, Berlin, 2002.


\bibitem{Breg} L. M. Br\'{e}gman, {\it Some properties of nonnegative matrices and their permanents}, Soviet Math. Dokl. {\bf 14}:945--949, 1973.


\bibitem{CDM22} U. Chabaud, A. Deshpande, and S. Mehraban, {\it Quantum-inspired permanent identities}. Quantum, 6:877, 2022.

\bibitem{CI} M. Chan and N. Ilten, {\it Fano schemes of determinants and permanents}, Algebra Number Theory {\bf 9}(3): 629--679, 2015.

\bibitem{DEF} J. Draisma, R. Eggermont and A. Farooq, {\it Components of symmetric wide-matrix varieties}, J. f\"{u}r die Reine und Angew. Math., \texttt{https://doi.org/10.1515/crelle-2022-0064}. 


\bibitem{Eg} G. P. Egorychev, {\it Proof of the van der Waerden conjecture for permanents} (in Russian), Sibirsk. Mat. Zh. 22(6) (1981) 65–71; English translation, Siberian Math. J. {\bf 22}:854--859, 1981.


\bibitem{Fal} D. I. Falikman, {\it Proof of the van der Waerden conjecture regarding the permanent of a doubly stochastic matrix} (in Russian), Mat. Zametki 29 (1981) 931–938; English translation, Math. Notes {\bf 29}:475--
479, 1981. 

\bibitem{GHSW} F. Gesmundo, H. Huang, H. Schenck and J. Weyman, {\it Bernstein-Gelfand-Gelfand meets geometric complexity theory: resolving the $2\times 2$ permanents of a $2\times n$ matrix}, arXiv preprint at \texttt{arXiv:2312.12247}, 2023. 

\bibitem{M2} D. Grayson and M. Stillman, \texttt{Macaulay2}, {\it a software system for research in algebraic geometry}, available at \url{https://macaulay2.com}.


\bibitem{Gur} L. Gurvits, {\it Van der Waerden/Schrijver-Valiant like conjectures and stable (aka hyperbolic) homogeneous polynomials: One theorem for all}, Electron. J. Combin. {\bf 15}, 2008. 


\bibitem{Han94} S. Hande. {\it A note on order statistics for nondentically distributed variables}. Sankhy\={a}: The Indian Journal of Statistics, Series A (1961-2002), {\bf 56}(2):365--368, 1994.


\bibitem{K} G. Kirkup, {\it Minimal primes over permanental ideals}, Trans. Amer. Math. Soc. {\bf 360}(7):3751--3770, 2008.  

\bibitem{Landsberg17}J.~M. Landsberg, {\it Geometry and complexity theory}, Cambridge studies in advanced mathematics, 2017. 

 \bibitem{LS2000} R. Laubenbacher and I. Swanson, {\it Permanental ideals}, J. Symbolic Comput. {\bf 30}:195--205, 2000.
 
 
 \bibitem{LR34} D. E. Littlewood and A. R. Richardson, {\it Group characters and algebra}, Philosophical Transactions of the Royal Society of London. Series A, Containing Papers of a Mathematical or Physical Character, {\bf 233}(721-730):99--141, 1934.
 
 \bibitem{MacMah} P. A. MacMahon, {\it Combinatory analysis}, volume I-II. Cambridge University Press, 1915-16.

\bibitem{MS21} M.~Micha{\l}ek and B.~Sturmfels, {\it Invitation to nonlinear algebra}, Vol. {\bf 211}. American Mathematical Soc., 2021.
 
\bibitem{MR2005} T. Mignon and N. Ressayre, {\it A Quadratic Bound for the Determinant and Permanent Problem}, Int. Math. Res. Not. {\bf 79}:4241--4253, 2004.

\bibitem{Minc} H. Minc, {\it Permanents}. Addison-Wesley, Reading, MA, 1978.


\bibitem{Radha} J. Radhakrishnan, {\it An Entropy Proof of Br\'{e}gman's Theorem}, J. Combin. Theory Ser. A {\bf 77}(1):161--164, 1997. 


\bibitem{libro_Russo} F. Russo, {\it On the geometry of some special projective varieties}, Lecture Notes of the Unione Matematica Italiana, 18, 2016.


 
\bibitem{Valiant79} L. G. Valiant, {\it Completeness classes in algebra}, Proceedings of the 11th STOC, ACM, 249--261, 1979. 


\bibitem{VG} J. von zur Gathen, {\it Feasible arithmetic computations: Valiant's hypothesis}, J. Symbolic Comput. {\bf 4}(2):137---172, 1987.


\bibitem{W19} A. Wigderson, {\it Mathematics and computation. A theory revolutionizing technology and science}. Princeton University Press, Princeton, NJ, 2019.

\bibitem{YY} Y. Yu, {\it The permanent rank of a matrix}, J. Combin. Theory Ser. A {\bf 85}(2):237--242, 1999.

\end{thebibliography}
\end{document}